\definecolor{dark-blue}{rgb}{0.15,0.15,0.4}
\definecolor{medium-blue}{rgb}{0,0,0.5}
\numberwithin{equation}{section} 
\newcommand{\R}{\mathbb{R}}
\newcommand{\N}{\mathbb{N}}
\newcommand{\dt}{\Delta t}
\newcommand{\id}{\mathbf{id}}
\newcommand{\dis}{\displaystyle}
\newcommand{\bvarphi}{\boldsymbol{\varphi}}
\newcommand{\bphi}{\boldsymbol{\phi}}
\newcommand{\bpi}{\boldsymbol{\pi}}
\newcommand{\bP}{\mathbf{P}}
\newcommand{\bgamma}{\boldsymbol{\gamma}}
\newcommand{\brho}{\boldsymbol{\varrho}}
\newcommand{\bff}{\mathbf{f}}
\newcommand{\be}{\boldsymbol{\eta}}
\newcommand{\bpsi}{\boldsymbol{\psi}}
\newcommand{\bv}{\mathbf{v}}
\newcommand{\bet}{\boldsymbol{\tilde{\eta}}}
\newcommand{\et}{\tilde{\eta}}
\newcommand{\zt}{\tilde{z}}
\newcommand{\rt}{\tilde{r}}
\newcommand{\tht}{\tilde{\theta}}
\newcommand{\bg}{\mathbf{g}} 
\newcommand{\bh}{\mathbf{h}}
\newcommand{\bd}{\mathbf{d}}
\newcommand{\bk}{\mathbf{k}}
\newcommand{\bxi}{\boldsymbol{\xi}}
\newcommand{\bw}{\mathbf{w}}
\newcommand{\bz}{\mathbf{z}}
\newcommand{\bzeta}{\boldsymbol{\zeta}}
\newcommand{\bp}{\mathbf{p}}
\newcommand{\bq}{\mathbf{q}}
\newcommand{\bt}{\mathbf{t}}
\newcommand{\bu}{\mathbf{u}}
\newcommand{\bups}{\boldsymbol{\upsilon}}
\newcommand{\bsigma}{\boldsymbol{\sigma}}
\newcommand{\bD}{\mathbf{D}}
\newcommand{\bn}{\mathbf{n}}
\newcommand{\bI}{\mathbf{I}}
\newcommand{\bup}{\mathbf{\tilde{u}}}
\newcommand{\bG}{\mathbf{G}}
\newcommand{\bA}{\boldsymbol{\mathcal{A}}^{n+1,n}}
\newcommand{\bS}{S^{n+1,n}}
\newcommand{\bs}{\mathbf{s}^{n+1,n}}
\newcommand{\bAc}{\boldsymbol{\mathcal{A}}^{\bet}}
\newcommand{\bSc}{S^{\bet}}
\newcommand{\bsc}{\mathbf{s}^{\bet}}
\newcommand{\buh}{\mathbf{\hat{u}}}
\newcommand{\bul}{\mathbf{\overline{u}}}
\newcommand{\bel}{\boldsymbol{\overline{\eta}}}
\newcommand{\bvl}{\mathbf{\overline{v}}}
\newcommand{\bdl}{\mathbf{\overline{d}}}
\newcommand{\bwl}{\mathbf{\overline{w}}}
\newcommand{\bkl}{\mathbf{\overline{k}}}
\newcommand{\bzl}{\mathbf{\overline{z}}}
\newcommand{\bupst}{\boldsymbol{\tilde{\upsilon}}}
\newcommand{\bupsl}{\boldsymbol{\overline{\upsilon}}}
\newtheorem{defn}{Definition}[section]
\newtheorem{theorem}[defn]{Theorem}
\newtheorem{prop}[defn]{Proposition}
\newtheorem{lemma}[defn]{Lemma}
\newtheorem{cor}[defn]{Corollary}
\newtheorem*{remark}{Remark}
\newtheorem*{notation}{Notation}
\newtheorem{assumption}{Assumption}
\begin{document}
	
\title{Analysis of a 3D Nonlinear, Moving Boundary Problem describing
Fluid-Mesh-Shell Interaction}

\author{Sun\v cica \v Cani\' c}
\address{Department of Mathematics, UC Berkeley}
\curraddr{}
\email{canics@berkeley.edu}
\thanks{}

\author{Marija Gali\'c}
\address{Department of Mathematics, Faculty of Science, University of Zagreb}
\email{marijag5@math.hr}
\thanks{}

\author{Boris Muha}
\address{Department of Mathematics, Faculty of Science, University of Zagreb}
\email{borism@math.hr}
\thanks{The first author was supported in part by the US National Science Foundation under grants DMS-1853340 and DMS-1613757. The second and third author were supported in part by the Croatian Science Foundation (Hrvatska zaklada za znanost) grant number IP-2018-01-3706 and by the Croatia-USA bilateral grant “Fluid-elastic structure interaction with the Navier slip boundary
condition”.}

\subjclass[2000]{Primary 74F10, 35D30; Secondary 74K25, 76D}

\date{\today}

\begin{abstract}
	We consider a nonlinear, moving boundary, fluid-structure interaction problem between 
	a time dependent incompressible, viscous fluid flow, and an elastic structure composed of a cylindrical shell supported by a mesh of elastic rods. 
	The fluid flow is modeled by the time-dependent Navier-Stokes equations in a three-dimensional cylindrical domain, while 
	the lateral wall of the cylinder is modeled by 
 the two-dimensional linearly elastic Koiter shell equations coupled to a one-dimensional system of conservation laws 
 defined on a graph domain, describing a mesh of curved rods.  
 The mesh supported shell allows  displacements in all three spatial directions.
 Two-way coupling based on kinematic and dynamic coupling conditions is assumed between the fluid and composite structure,
 and between the mesh of curved rods and Koiter shell. 
 Problems of this type arise in many applications, including blood flow through arteries treated with vascular prostheses called stents.
We prove the existence of a weak solution to this nonlinear, moving boundary problem by using the time discretization via Lie operator splitting method
combined with an Arbitrary Lagrangian-Eulerian approach, and a non-trivial extension of the 
	Aubin-Lions-Simon compactness result to problems on moving domains.
	 \end{abstract}

\maketitle


\section{Introduction}

We study a fluid-structure interaction (FSI) problem between the flow of a viscous, incompressible, Newtonian fluid occupying a three-dimensional cylindrical domain 
with elastic lateral walls composed of a cylindrical shell supported by a mesh of curved rods.
See Fig.~\ref{sketch}.
\begin{figure}[t]
	\centering
	\includegraphics[width=0.7\linewidth]{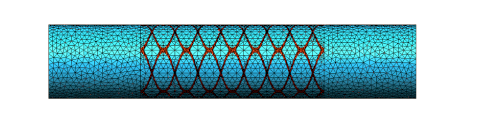}
	\caption{A sketch of a mesh-supported shell}
	\label{sketch}
\end{figure}

The elastic composite structure is modeled by 
the two-dimensional linearly elastic Koiter shell equations \cite{KoiterJust} coupled to a system of linear one-dimensional hyperbolic balance laws defined on a graph domain, 
modeling a mesh of elastic curved rods \cite{TambacaKosorCanic10}.
The system of 1D hyperbolic balance laws captures infinitesimal rotation and displacement of curved rods. 
The curved rods are mutually coupled at graphs' vertices through continuity of displacements and infinitesimal rotation, and via the balance of contact forces and moments,
defining a 1D hyperbolic net model. 
The 1D hyperbolic net model and Koiter shell are coupled via the no-slip condition and via the balance of contact forces.
The resulting mesh-supported shell allows displacements in 
all three spatial directions,
which presents one of the main mathematical difficulties in the existence proof. 
To the best of our knowledge, this is the first existence proof in which a composite structure consisting of a mesh-supported shell is studied,
and in which thin structure displacements
 in all three spatial directions are considered.
Each novelty gives rise to significant difficulties in the existence proof.

The fluid flow is modeled by the 
three-dimensional Navier-Stokes equations. No assumption on the symmetry of flow is used.
The composite structure is coupled to the fluid equations via the no-slip condition and via the balance of contact forces, which are evaluated along the current location 
of the fluid-structure interface. 
This two-way coupling gives rise to a strong geometric nonlinearity in the problem, since the location of the fluid domain is not known {\sl a priori} and is one of the 
unknowns in the problem. 

The coupled fluid-structure interaction problem is driven by the time dependent inlet and outlet dynamic pressure data.

We prove the existence of a weak solution to this nonlinear, moving boundary problem. 
The existence proof is constructive and  it is based on semi-discretizing the problem in time
using a Lie operator splitting strategy, and on using an Arbitrary Lagrangian-Eulerian (ALE) approach
to deal with the motion of the fluid domain. 

The coupled problem is discretized in time and at the same time split into a fluid and a structure subproblem using the so called Lie operator 
splitting strategy. 
Solutions of the fluid and structure subproblems communicate via the initial data, similar to the famous Lie-Trotter formula. 

The time discretization via Lie operator splitting defines a sequence of approximate solutions,
which we want to show converges to a weak solution, as the time discretization step converges to zero.
A key component in obtaining convergence is to split the coupled problem into subproblems  
in such a way that the approximate solutions satisfy 
a uniform energy estimate, which mimics the energy of the continuous problem.
Because of the fluid domain motion, 
obtaining a uniform energy estimate  
for the gradients of the approximate fluid velocities requires additional work, since the classical 
Korn's estimate  depends on the fluid domain motion.
Once uniform energy estimates are derived, 
the existence of weakly- and weakly*-convergent subsequences follows from the uniform energy estimates.

To show that the limits of these subsequences satisfy the weak formulation of the coupled, nonlinear problem, 
a compactness argument needs to be employed.
However, because of the geometric nonlinearity due to the fluid domain motion,
classical compactness arguments cannot be applied,
which presents another major difficulty in studying this class of problems. 
Thanks to 
a recent nontrivial generalization of the Aubin-Lions-Simon compactness lemma to problems on moving domains \cite{BorSunCompact},
we show that subsequences of approximate solutions converge strongly in the corresponding topologies, as the time discretization step goes to zero.

Using the strong convergence of approximate subsequences we want to pass to the limit in weak formulations of approximate problems
and show that the limits satisfy the weak formulation of the continuous, coupled problem.
However, again due to the motion of fluid domains, the classical approach
cannot be directly applied since the test functions of approximate problems themselves depend on fluid domains and on the time discretization.
This is why ``appropriate'' test functions need to be constructed from the approximate test functions, for which one can show that they
converge uniformly to the test functions of the continuous problem.

The strong convergence of approximate solutions obtained using the generalized compactness arguments from \cite{BorSunCompact}, 
combined with uniform convergence of the ``appropriate'' test function, allows passing to the limit in 
approximate weak formulations, and showing that the limits of approximate solutions satisfy the weak formulation of the coupled, continuous problem.

The result and the techniques developed here are robust, and they provide a significant step forward in the analysis of FSI problems.
For the first time a 1D-2D-3D nonlinearly coupled FSI problem involving a 1D hyperbolic net is analyzed, 
	and the existence of a weak solution proved.
	In contrast with the current literature on FSI problems where only normal (radial/transverse) component of a thin structure displacement is considered,
	in the present work all three spatial displacement components are taken into account.
	This is the first work in which the constructive existence proof mimics a computational Arbitrary Lagrangian-Eulerian (ALE) approach
	to dealing with the motion of the fluid domain.
	Instead of mapping the problem onto a fixed, reference domain, in the present work approximate
	solutions in time are constructed on the ``current'' fluid domain 
	where the ``ALE time derivative'' is used to ``calculate'' the time derivatives on moving domains.

A major difficulty in this work is related to the introduction of the tangential components of displacement, which 
are associated with possibly having non-Lipschitz fluid domains, or domains which may degenerate and lose the subgraph property.
This is why we introduced two assumptions. One is the uniform Lipschitz property condition for approximate structure displacements,
and the other is the condition which assumes that 
there exists a time $T>0$ such that for every $t\leq T,$ the fluid domain boundary remains a subgraph of a 
function. 
The first condition allowed us to use some known results from functional analysis that hold for Lipschitz domains and construct 
extensions of divergence-free functions on moving domains to a divergence-free function on a ``maximal'' domain. 
The second condition allowed us to explicitly 
define a family of Arbitrary Lagrangian-Eulerian mappings between discrete domains at different times, and calculate the discrete 
time derivatives. 
While the second condition could have been avoided at the expense of a more complicated existence proof (which would hold until the fluid domain degeneracy), 
the first condition is crucial, and is intimately related to working with incompressible fluids. 
The uniform Lipschitz condition can be avoided in the case when only transverse displacements are considered, since in that case 
divergence-free extensions from moving domains
to a maximal domain can be explicitly constructed without the uniform Lipschitz property.
In the general case when all three displacement components are different from zero, 
the uniform Lipschitz condition can be shown to be satisfied for structures with a slightly higher regularity ($\varepsilon$-higher regularity) than the Koiter shell, such as,
e.g., 
tripolar materials studied in \cite{Boulakia2005,multipolar}.
The sixth-order derivative in the models studied in \cite{Boulakia2005,multipolar}
guarantees coercivity of the structure operator in $H^3$, and thus implies Lipschitz displacements in $\mathbb{R}^3$.
Otherwise, special conditions on the data and/or on the topology of the 1D mesh would have to be accounted for in order to be able to
guarantee that the uniform Lipschitz property holds for approximate structure displacements.

Problems of the type studied in this work arise in many real-life applications.
One example is the interaction between blood flow and arterial walls treated
with vascular prostheses called stents, which are used to prop diseased arteries open. 
Optimal design and performance of stents depends on the understanding of FSI problems
studied in this manuscript (see, e.g. \cite{butany2005coronary,CanicSIAM_News}).

\section{Literature Review}

The development of existence theory for moving boundary, fluid-structure interaction problems, has become particularly active since the late 1990's. The first existence results were obtained for the cases in which the structure was completely immersed in the fluid, and the structure was considered to be either a rigid body, or described by a finite number of modal functions \cite{Boulakia2003,Conca,DE2,DE3,Feireisl,GaldiTimePeriodic,GaldiHandbook,GaldiHemo}. 

These results were extended to involve elastic structures modeled by 2D or 3D linear elasticity, 
coupled to the 2D or 3D Navier-Stokes equations across a {\emph{fixed fluid-structure interface}}
in \cite{avalos2008higher, avalos2009semigroup,DuGunz03} for linear models,
and in \cite{Barbu,Barbu2,chueshov2016interaction,chueshov2013interaction, chueshov2011well,KukavicaTuffahaZiane10} for nonlinear models.

The first fluid-structure interaction existence result
 in which the coupling between the fluid and an elastic structure was assumed across a deformed, {\emph{moving interface}} whose location was not known {\sl a priori},
 was obtained in \cite{Hugo}, where the existence, locally in time, of a strong solution was obtained
 for an interaction between an incompressible, viscous two-dimensional fluid and a one-dimensional viscoelastic string,
  assuming periodic boundary conditions. 
 This result was extended in \cite{Lequeurre}, where the existence of a unique, local in time, strong solution
for any data, and the existence of a global strong solution for small data, were obtained
for a clamped viscoelastic beam. Coutand and Shkoller proved the existence, locally in time, of a unique regular solution 
for the interaction between a three-dimensional incompressible, viscous fluid, and a three-dimensional structure, immersed in fluid, where the structure was modeled by the equations of linear \cite{CSS1}, or quasi-linear \cite{CSS2} elasticity. 
Assuming lower regularity for the initial data,
Kukavica and Tuffaha obtained an existence result 
in the case when the structure was modeled by the linear wave equation \cite{Kuk}.
Existence of strong solutions to fluid-structure interaction problems between  three-dimensional viscous, incompressible fluids and two-dimensional elastic {\emph{shells}} was 
considered in \cite{ChengShkollerCoutand,ChengShkoller}, where local-in-time existence of unique regular solutions was proved.
Recently,  Grandmont and Hillariet proved the existence of a {\emph{global}} strong solution to a 2D FSI problem involving 
a viscoelastic beam, where they also showed that contact involving viscoelastic structure does not occur in finite time \cite{Grandmont16}.

In the context of weak solutions, the first global existence result was obtained in  \cite{CDEM}
for an unsteady interaction
of a three-dimensional incompressible, viscous fluid, and a two-dimensional viscoelastic plate. 
Grandmont improved this result to hold for a two-dimensional elastic plate in \cite{Grandmont08}. 

In \cite{MuhaCanic13}, the authors of the present manuscript introduced a new methodology for proving existence of weak solutions to FSI problems
involving incompressible, viscous fluids. The methodology is
based on time discretizing the FSI problem and using an operator splitting strategy to construct approximate
solutions to the FSI problem, and then proving that they converge to a weak solution of the continuous problem (see also \cite{GuiGloCan}).
This methodology was first applied to 
a 2D FSI problem modeling the flow of an incompressible, viscous, Newtonian fluid flowing through a cylinder whose lateral wall was modeled by 
the Koiter shell equations (both viscoelastic and linearly elastic),
assuming nonlinear coupling at the deformed fluid-structure interface.
A similar problem was also studied by Lengeler and R{\r u}{\v z}i{\v c}ka in \cite{LenRuz}.
The time discretization via operator splitting was then used by Muha and \v{C}ani\'{c} to study existence of a weak solution
to a 2D FSI problem with the Navier slip boundary condition \cite{BorSunSlip}.
This was the first manuscript in which the existence of a weak solution was studied for a problem
involving a thin structure allowing both transverse and tangential components of displacement. 
In contrast with the present work, the 2D setting of the problem in \cite{BorSunSlip} simplified the analysis. 

Encouraged by the robustness of the constructive existence approach, the same techniques were then extended to tackle
{\sl{three-dimensional}} FSI with the no-slip condition involving a linearly elastic, cylindrical Koiter shell allowing only {\sl{radial}} displacement \cite{BorSun3d},
and to a {\sl{nonlinear shell}} involving nonlinear membrane energy with an additional regularizing term \cite{BorSunNonLinearKoiter}.
Furthermore, the constructive existence approach was then also extended to prove existence of a weak solution to an FSI problem
involving a {\sl{composite, ``sandwich'' structure}} consisting of two layers: a thin Koiter shell and a thick structure of finite thickness, modeled by the
2D equations of linear elasticity \cite{BorSunMultiLayered}.
 
None of the works mentioned above, however, considered a composite structure involving a lower-dimensional elastic mesh of curved rods.
The only work in which a lower-dimensional 1D mesh was coupled to the elastodynamics of a 2D shell and the flow of a 3D viscous incompressible fluid 
was in our recent work \cite{LinearFSIstent}, where linear coupling was considered and thus the fluid domain was fixed,
and the fluid flow was modeled by the linear, time-dependent Stokes equations.
For completeness, we also mention two other works where an FSI problem approximating a stent-supported artery interacting with the flow of blood was studied   
\cite{StentFSI1,PironneauFSI}. In both works, however, the presence of a stent was modeled by the jump in the elasticity coefficients of the thin
shell or membrane structure,
and only radial (transverse) displacement was assumed to be different from zero.

The 1D hyperbolic net model, considered in the present work to model the elastodynamics of an elastic mesh of curved rods,
was first introduced in \cite{TambacaKosorCanic10}, where a static version of the model was developed. 
The 1D hyperbolic net model was proposed as an alternative to the engineering approaches in which a stent is modeled as a single 3D elastic body,
and approximated using 3D finite elements. 
Simulating thin stent components, i.e., stent struts, using 3D approaches, is computationally very expensive and is associated with large computer memory requirements.
The 1D model introduced in \cite{TambacaKosorCanic10} significantly reduces computational costs, and provides a tool for real-time evaluation of 
mechanical properties of mesh-like devices. A comparison with full 3D model simulations, published in \cite{CanicTambaca12}, showed excellent approximation
properties of the 1D model.
Although the model is one-dimensional, it describes structural deformation in all three spatial directions \cite{Antman}. 
The resulting model was justified computationally in \cite{CanicTambaca12}, and mathematically in \cite{Griso,JurakTambaca1,JurakTambaca2},
where approximation of the full 3D model by the 1D hyperbolic net was investigated. 

Encouraged by the excellent approximation properties  obtained in \cite{CanicTambaca12,Griso,JurakTambaca1,JurakTambaca2},
the authors utilized the 1D hyperbolic net model to study the behavior of mesh-supported shells in \cite{StentShell17},
where a static elasticity problem coupling 
the 1D hyperbolic net to an elastic cylindrical shell of Naghdi type was investigated.
No fluid was considered in  \cite{StentShell17}.
A time-dependent, incompressible Stokes fluid was added in \cite{LinearFSIstent} 
where a {\sl{linear}} FSI problem, coupling the fluid to the mesh-supported structure via a fixed, undeformed interface was studied. 

The present work extends the weak solution existence result of \cite{LinearFSIstent}  to a {\sl{nonlinear}} problem
by (1) considering the nonlinear flow modeled by the Navier-Stokes equations, and 
(2) coupling the fluid to the mesh-supported shell along the current, deformed
interface, giving rise to a strong geometric nonlinearity.
Proving existence of weak solutions to this {\sl{3D nonlinear problem}} allowing structural displacements in all {\sl{three spatial directions}},
is a culmination of the development of the constructive existence proof methodology, and a significant step forward in the analysis of moving boundary problems.
Moreover,  the finite energy solution spaces considered here present a natural framework to study existence of physically reasonable solutions
to this class of problems, since the presence of the 1D hyperbolic mesh of curved rods will likely not allow solutions with higher regularity.

\section{Model description}

\subsection{The fluid}
We consider the flow of an incompressible, viscous fluid, modeled by the Navier-Stokes equations, in a three-dimensional time-dependent cylindrical domain of reference length $L$ and reference radius $R.$ The reference fluid domain will be denoted by $\Omega$, and the reference lateral boundary by 
$\Gamma=\{(z,R\cos\theta,R\sin\theta) \in \mathbb{R}^3:z\in(0,L), \theta\in(0,2\pi)\}$.  
The boundary of the cylindrical domain consists of three parts: the lateral boundary, whose location is not known {\sl{a priori}} but depends on the motion of the fluid occupying the domain, the inlet boundary $\Gamma_{in}$ and the outlet boundary $\Gamma_{out}.$ 
See Fig.~\ref{fig:domain}. 
\begin{figure}[b]
	\centering
	\includegraphics[width=10cm]{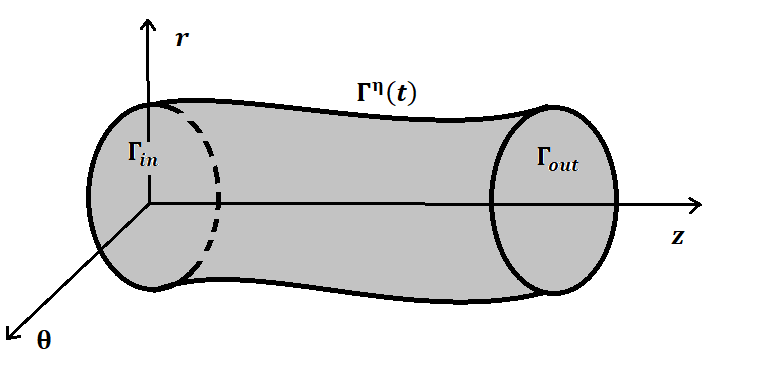}
	\caption{The fluid domain}
	\label{fig:domain}
\end{figure}
The lateral boundary is
a composite structure whose elastodynamics is modeled by the
linearly elastic Koiter shell equations coupled to a 1D hyperbolic net describing the motion of an elastic mesh of curved rods.
The 1D hyperbolic net is a collection of coupled linearly elastic one-dimensional curved rod equations defined on a graph domain,
interacting at graph's vertices, as described below.
The lateral boundary displacement is a vector function, which will be denoted by
 $\be:(0,T)\times \Gamma \to \R^2$ with $$\be(t,z,\theta)=(\eta_z(t,z,\theta),\eta_r(t,z,\theta),\eta_\theta(t,z,\theta)).$$ 
\begin{assumption}\label{3D_displ}
All three components of displacement will be assumed to be (non-zero) functions of $t,z$ and $\theta$. 
\end{assumption}
This contrasts other FSI works involving thin structures, where only radial  (transverse) component of a thin structure displacement is assumed to be different from zero.

The fluid domain deforms as a result of fluid-structure interaction between the flow of an incompressible, viscous fluid and the composite elastic structure. 
Therefore, the fluid domain is not known {\emph{a priori}}, as it depends on one of the unknowns in the problem, namely $\be$. 
We introduce $\bphi^{\be}(t,\cdot):\Omega\to \R^3, t\in (0,T),$ to describe the
time-dependent {\emph{deformation}} of the fluid domain.
The mapping $\bphi^{\be}$ is  an arbitrary, injective and orientation preserving mapping, and such that 
\begin{equation}\label{phi_mapping}
\bphi^{\be}(t)|_{\Gamma}=\id + \be(t,z,\theta).
\end{equation}
We denote by
\begin{equation}\label{mapped_domain}
\Omega^{\be}(t)= \bphi^{\be}(t,\Omega)\ {\rm and} \ \Gamma^{\be}(t)=\bphi^{\be}(t,\Gamma)
\end{equation}
the deformed fluid domain 
at time $t$, and the corresponding deformed lateral boundary, respectively. 
The superscript $\be$ emphasizes the dependence 
on displacement $\be$.

We are interested in studying a dynamic pressure-driven flow through $\Omega^{\be}(t)$ of an incompressible, viscous, Newtonian fluid modeled by the Navier-Stokes equations 

\begin{equation}\label{fluid}
\left.
\begin{array}{rcl}
\rho_F\left ({\partial_t{\bu}}+({\bu}\cdot\nabla) \bu\right)&=&\nabla\cdot\bsigma,\\
\nabla\cdot \bu &=& 0, \\
\end{array}
\right \}
\text{ in }\Omega^{\be}(t), t \in (0,T),
\end{equation}
where $\rho_F$ denotes the fluid density, $\bu$ is the fluid velocity, $\bsigma=-p\bI+2\mu_F \bD(\bu)$ is the fluid Cauchy stress tensor, $p$ is the fluid pressure, $\mu_F$ is the dynamic viscosity coefficient, and $\bD(\bu)=\frac{1}{2}(\nabla \bu + \nabla ^T \bu)$ is the symmetrized gradient of $\bu$.
At the inlet and outlet boundaries, we prescribe zero tangential velocity and dynamic pressure \cite{CMC94}:
\begin{equation}\label{fluid_bdry}
\left.
\begin{array}{rcl}
\dis p + \frac{\rho_F}{2}|\bu|^2 &=& P_{in/out}(t),\\
\bu\times \mathbf{e}_z &=& 0,
\end{array}
\right\}
\text{ on }\Gamma_{in/out},
\end{equation}
where $P_{in/out}$ are given. Therefore, the fluid flow is driven by a prescribed dynamic pressure drop, and the flow enters and leaves the fluid domain orthogonally to the inlet and outlet boundary.
No symmetry on the fluid flow is assumed.

\subsection{The shell}
The lateral boundary of the fluid domain is modeled by a mesh-supported shell.
The elastodynamics of the shell is described by the linearly elastic cylindrical Koiter shell equations capturing displacement in all three spatial directions \cite{Koiter} . The shell thickness will be denoted by $h>0,$ the length by $L$, and its reference radius of the middle surface by $R$. 
We use $\bvarphi$ to denote the parameterization that maps the set $\omega = (0,L)\times(0,2\pi)$ onto $\Gamma$:
$$ \bvarphi : \omega \to \mathbb{R}^3, \quad \bvarphi(z,\theta)=(z,R\cos\theta,R\sin\theta).$$
The first fundamental form of the cylinder $\Gamma$, or the metric tensor, is given in covariant $A_c$ or contravariant $A^c$ components  by
\begin{center}
	$A_c=
	\begin{pmatrix}
	1 & 0 \\
	0 & R^2\\
	\end{pmatrix}, 
	$\quad
	$A^c=
	\begin{pmatrix}
	1 & 0 \\
	0 & \frac{1}{R^2}
	\end{pmatrix},
	$
\end{center}
and the area element is $dS=\sqrt{\det A_c}\ dzd\theta=R\ dzd\theta.$
The second fundamental form of the cylinder $\Gamma$, or the curvature tensor in covariant components, is given by 
$$B_c=
\begin{pmatrix}
0 & 0 \\
0 & R
\end{pmatrix}.$$
Under the action of force, the Koiter shell is displaced from its reference configuration $\Gamma$ by a displacement $\be=\be(t,z,\theta)=(\eta_z,\eta_r,\eta_\theta),$ where $\eta_z, \eta_r$ and $\eta_\theta$  denote the tangential, radial and azimuthal components of displacement.  

The cylindrical Koiter shell is assumed to be clamped at the end points, giving rise to the following boundary conditions:
\begin{align*}
\be(t,0,\theta) &= \be(t,L,\theta) = 0,\;\theta\in(0,2\pi),\\
\partial_z \eta_r(t,0,\theta) &= \partial_z \eta_r(t,L,\theta) = 0,\;\theta\in (0,2\pi).
\end{align*}
At $\theta\in\{0,2\pi\}$, we assume periodic boundary conditions for structure displacement:
\begin{align*}
\be(t,z,0)&=\be(t,z,2\pi),\;z\in(0,L),\\
\partial_\theta \eta_r(t,z,0) &= \partial_\theta \eta_r(t,z,2\pi),\;z\in(0,L).
\end{align*}

The Koiter shell deformation under loading depends on its elastic properties, defined by the following elasticity tensor $\mathcal{A}$:
$$ \mathcal{A}E=\frac{4\lambda\mu}{\lambda+2\mu}(A^c\cdot E)A^c + 4\mu A^cEA^c,\quad E \in \text{Sym}(\R^2),$$
where $\mu$ and $\lambda$ are the Lam\'e constants. Stretching of the middle surface
and flexure will be measured by the following
linearized change of metric tensor $\bgamma$, and 
the following linearized change of curvature tensor $\brho$:

$$\bgamma(\be)=
\begin{pmatrix}
\partial_z\eta_z & \frac{1}{2}(\partial_\theta\eta_z + R\partial_z\eta_\theta)\\
\frac{1}{2}(\partial_\theta\eta_z + R\partial_z\eta_\theta) & R\partial_\theta\eta_\theta + R\eta_r
\end{pmatrix},$$

$$\brho(\be)=
\begin{pmatrix}
-\partial_{zz}\eta_r & -\partial_{z\theta}\eta_r + \partial_z\eta_\theta \\
-\partial_{z\theta}\eta_r + \partial_z\eta_\theta & -\partial_{\theta\theta}\eta_r + 2\partial_\theta\eta_\theta + \eta_r
\end{pmatrix}.
$$
Using $\bgamma(\be)$ and $\brho(\be)$ we can now define 
 the elastic energy of the deformed linear Koiter shell to be:
\begin{equation}\label{elastic_energy}
E(\be)=\frac{h}{2}\int_{\omega}\mathcal{A}\bgamma(\be):\bgamma(\be)R+
\frac{h^3}{24}\int_{\omega}\mathcal{A}\brho(\be):\brho(\be)R.
\end{equation}

The elastic energy of the Koiter shell, together with the boundary conditions, motivate the following solution and test spaces:
\begin{align}\label{shell_space}
\begin{split}
V_K = \{&\be=(\eta_z,\eta_r,\eta_{\theta})\in H^2(\omega)\times H^2(\omega)\times H^2(\omega):\\
&\be(t,z,\theta) =\partial_z \eta_r(t,z,\theta)= 0, z\in\{0,L\},\theta\in(0,2\pi), \\	
&\be(t,z,0)=\be(t,z,2\pi), \partial_\theta \eta_r(t,z,0) =\partial_\theta \eta_r(t,z,2\pi), z\in (0,L) \},
\end{split}
\end{align}
equipped with the corresponding norm:
\begin{equation*}
\|\be\|_{H^2(\omega)}^2:=\|\be\|_{H^2(\omega;\R^3)}^2 = \|\eta_z\|_{H^2(\omega)}^2 + \|\eta_r\|_{H^2(\omega)}^2 + \|\eta_\theta\|_{H^2(\omega)}^2.
\end{equation*}
Given a load $\mathbf{f}$, the displacement $\be$ of the Koiter shell is a solution to the following elastodynamics problem in weak form: 
find $\be=(\eta_z,\eta_r,\eta_\theta)\in V_K$ such that:
\begin{equation}\label{shell}
\rho_K h \int_{\omega}\partial_t^2\be\cdot \bpsi R + \langle\mathcal{L}\be,\bpsi\rangle = \int_{\omega}\mathbf{f}\cdot\bpsi R,\; \forall \bpsi \in V_K.
\end{equation}
Here, $\rho_K$ is the shell density and $\mathcal{L}$ is the following operator
describing the elastic properties of the shell, obtained from the elastic energy of the Koiter shell \eqref{elastic_energy}
with an added small regularizing term to guarantee coercivity in the axial and azimuthal directions:
\begin{align*}
\langle \mathcal{L}\be,\bpsi\rangle :=&h\int_{\omega}\mathcal{A}\bgamma(\be):\bgamma(\bpsi)R+\frac{h^3}{12}\int_{\omega}\mathcal{A}\brho(\be):\brho(\bpsi)R\\&+\varepsilon_K\int_{\omega}(\Delta\eta_z\Delta \psi_z+\Delta\eta_\theta\Delta\psi_\theta)R.
\end{align*}
By using the same procedure as in the proof of Theorem 2.6-4 \cite{CiarletVol3} (inequality of Korn's type on general surfaces), 
one can show that operator $\mathcal{L}$ is coercive on $H^2$:
 $$
 \langle \mathcal{L}\be,\be \rangle \geq c \|\be\|_{H^2(\omega)}^2, \forall \be \in V_K.
 $$

\noindent
The differential formulation of the shell elastodynamics problem on $(0,T)\times\omega$ is then given by:
\begin{equation}\label{shell_diff}
\rho_K h \partial_{t}^2 \be R + \mathcal{L}\be = \mathbf{f}R.
\end{equation}
Mathematical justification of the Koiter shell model can be found in \cite{KoiterJust}.

\subsection{The elastic mesh}
Elastic mesh is a three-dimensional elastic body composed of a union of three-dimensional slender components called struts. 
Since the aspect ratio of stent struts is small,
i.e., the ratio between the square root of the cross-sectional area and length is small,
a one-dimensional curved rod model can be used to approximate struts' elastodynamic properties. 
The one-dimensional curved rod model describes displacement and infinitesimal rotation of the cross-sections
of the rod as a function of time and of the location along the middle line of the curved rod. 
For the $i$-th curved rod of length $l_i$, the middle line of the curved rod is parameterized by
$$\bP_i: [0,l_i] \to \bvarphi(\overline{\omega}),\quad i = 1,\dots,n_E,$$ 
where $s \in [0,l_i]$ will be used to denote the parameter, and $n_E$ denotes the number of curved rods in the mesh. 

The 1D curved rod model for the $i$-th curved rod is given in terms of 
displacement $\bd_i(t,s)$ of the middle line from its reference configuration, 
infinitesimal rotation of cross-sections $\bw_i(t,s)$, 
contact moment $\bq_i(t,s)$, and contact force $\bp_i(t,s)$:
\begin{equation}\label{stent}
\begin{array}{rcl}
\rho_S A_i\partial_t^2 \bd_i &=&  \partial_s\bp_i  + \bff_i,\\
\rho_S M_i \partial_t^2 \bw_i &= & \partial_s\bq_i + \bt_i \times \bp_i, \\
0 &= & \partial_s\bw_i- Q_iH_i^{-1}Q_i^T\bq_i,\\
0 &= & \partial_s\bd_i + \bt_i \times \bw_i.
\end{array} 
\end{equation}
Here, $\rho_S$ is the strut's material density, $A_i$ is the cross-sectional area of the $i$-th rod, $M_i$ is the matrix related to the moments of inertia of cross-sections, $\bff_i$ is the line force density acting on the $i$-th rod, and $\bt_i$ is the unit tangent on the middle line of the $i$-th rod. Matrix $H_i$ is a positive definite matrix which describes the elastic properties and the geometry of cross sections, while matrix $Q_i\in SO(3)$ represents the local basis at each point of the middle line of the $i$-th rod (see \cite{Antman} for more details).

The first two equations describe the linear impulse-momentum law and the angular impulse-momentum law, respectively, while the last two equations describe a constitutive relation for a curved, linearly elastic rod, and the condition of inextensibility and unshearability, respectively. 

To model the elastodynamics of an elastic mesh, equations \eqref{stent} are posed on a graph domain whose edges correspond
to the middle lines of curved rods meeting at graph's vertices. More precisely,
let $\mathcal{V}$ denote the set of graph's vertices (points where the middle lines meet), and let $\mathcal{E}$ denote the set of graph's edges (pairing of vertices). 
Ordered pair $\mathcal{N} = (\mathcal{V},\mathcal{E})$ defines the mesh net topology.
\begin{defn}
A 1D hyperbolic net modeling an elastic mesh of curved rods, is given by
the system of equations \eqref{stent}, defined on a graph domain $\mathcal{N} = (\mathcal{V},\mathcal{E})$, such that 
the following coupling conditions hold at every vertex $V \in \mathcal{V}$:
\begin{itemize}
	\item kinematic coupling conditions describing continuity of displacements and continuity of infinitesimal rotations of all the rods meeting at $V$, 
	\item dynamic coupling conditions describing balance of contact forces and contact moments for all the rods meeting at $V$.
\end{itemize}
\end{defn}

To define weak solutions to the 1D hyperbolic net problem,
we first introduce
a function space consisting of all the $H^1$-functions $(\bd,\bw)$ 
defined on the entire net $\mathcal{N}$, such that they satisfy the kinematic coupling conditions at each vertex $V\in\mathcal{V}:$
\begin{align*}
H^1(\mathcal{N};\R^6) = \{ &(\bd,\bw)=((\bd_1,\bw_1),\dots,(\bd_{n_E},\bw_{n_E}))\in \prod_{i=1}^{n_E}H^1(0,l_i;\R^6): \\
&\bd_i(\bP_i^{-1}(V))=\bd_j(\bP_j^{-1}(V)),
\bw_i(\bP_i^{-1}(V))=\bw_j(\bP_j^{-1}(V)),\\
&\forall V\in \mathcal{V}, V = e_i \cap e_j,\; i,j=1,\dots,n_E\}.
\end{align*}
This space is used in the definition of the solution space, which additionally contains the  condition of inextensibility and unshearability:
\begin{equation}\label{stent_space}
V_S = \{ (\bd,\bw)\in H^1(\mathcal{N};\R^6) : \partial_s\bd_i+\bt_i\times \bw_i = 0, i = 1,\dots,n_E\}.
\end{equation}
For a function $(\bd,\bw)\in V_S$ we consider the following norms on $H^1(\mathcal{N};\R^3)$:
\begin{align*}
\|\bd\|_{H^1(\mathcal{N};\R^3)}^2 := \sum_{i=1}^{n_E}\|\bd_i\|_{H^1(0,l_i;\R^3)}^2,\quad
\|\bw\|_{H^1(\mathcal{N};\R^3)}^2 := \sum_{i=1}^{n_E}\|\bw_i\|_{H^1(0,l_i;\R^3)}^2,
\end{align*}
and the following norms on $L^2(\mathcal{N};\R^3)$:
\begin{align*}
\|\bd\|_{L^2(\mathcal{N};\R^3)}^2  :=  \sum_{i=1}^{n_E}\|\bd_i\|_{L^2(0,l_i;\R^3)}^2,\quad
\|\bw\|_{L^2(\mathcal{N};\R^3)}^2  :=  \sum_{i=1}^{n_E}\|\bw_i\|_{L^2(0,l_i;\R^3)}^2.
\end{align*}

The {\bf{weak formulation for a single curved rod}} is then obtained 
by adding the first equation in \eqref{stent} multiplied by a test function $\bxi_i$, and the second equation in \eqref{stent} multiplied by a test function $\bzeta_i$, integrating by
parts over $[0,l_i]$, and using the constitutive relation and the condition of inextensibility and unshearability.
The resulting weak formulation reads: find $(\bd_i,\bw_i)$ such that
\begin{align}\label{rod_weak}
\begin{split}
&\rho_S A_i \int_0^{l_i}\partial_t^2 \bd_i \cdot\bxi_i  +\rho_S \int_0^{l_i} M_i\partial_t^2\bw_i \cdot\bzeta_i  + \int_0^{l_i} Q_iH_iQ_i^T\partial_s\bw_i \cdot\partial_s\bzeta_i  \\
&=\int_0^{l_i}\bff_i \cdot\bxi_i +\bp_i(l_i)\cdot\bxi_i(l_i)-\bp_i(0)\cdot\bxi_i(0)+\bq_i(l_i)\cdot\bzeta_i(l_i)-\bq_i(0)\cdot\bzeta_i(0),
\end{split}
\end{align}
for all $(\bxi_i,\bzeta_i)\in H^1(0,l_i)\times H^1(0,l_i)$ that satisfy the condition of inextensibility and unshearability.

The {\bf{weak formulation for the 1D hyperbolic net problem}} is obtained by adding the weak 
 formulations for each component (i.e. curved rod) and using the dynamic coupling conditions at each vertex. 
The boundary terms from \eqref{rod_weak} involving $\bp_i$ and $\bq_i$ will add up to zero, giving rise to the following
weak formulation for the 1D hyperbolic net problem: find $(\bd,\bw)=((\bd_1,\bw_1),\dots,(\bd_{n_E},\bw_{n_E}))\in V_S$ such that
\begin{align}\label{stent_weak}
\begin{split}
\dis\rho_S\sum_{i=1}^{n_E}A_i\int_{0}^{l_i}\partial_t^2 \bd_i\cdot \bxi_i +
\dis\rho_S \sum_{i=1}^{n_E}\int_{0}^{l_i}M_i\partial_t^2 \bw_i\cdot \bzeta_i \\
+\dis\sum_{i=1}^{n_E}\int_0^{l_i}Q_iH_iQ_i^T\partial_s\bw_i\cdot \partial_s\bzeta_i
= \sum_{i=1}^{n_E}\int_{0}^{l_i} \bff_i \cdot \bxi_i,
\end{split}
\end{align}
for all the test functions $(\bxi,\bzeta)=((\bxi_1,\bzeta_1),\dots,(\bxi_{n_E},\bzeta_{n_E}))\in V_S.$

\subsection{The elastic shell-mesh coupling}
We will be assuming that the elastic mesh is affixed to the shell surface so that 
$$\dis \bigcup_{i=1}^{n_E}\bP_i([0,l_i])\subset \Gamma = \bvarphi(\overline{\omega}).$$
Since $\bvarphi$ is injective on $\omega$, the functions $\bpi_i$, defined by 
$$\bpi_i=\bvarphi^{-1}\circ \bP_i : [0,l_i] \to \overline{\omega},\quad i=1,\dots,n_E,$$ are well defined. 
See Fig.~\ref{fig:parameterizations}.
\begin{figure}[htp!]
	\centering
	\includegraphics[width=0.45\linewidth]{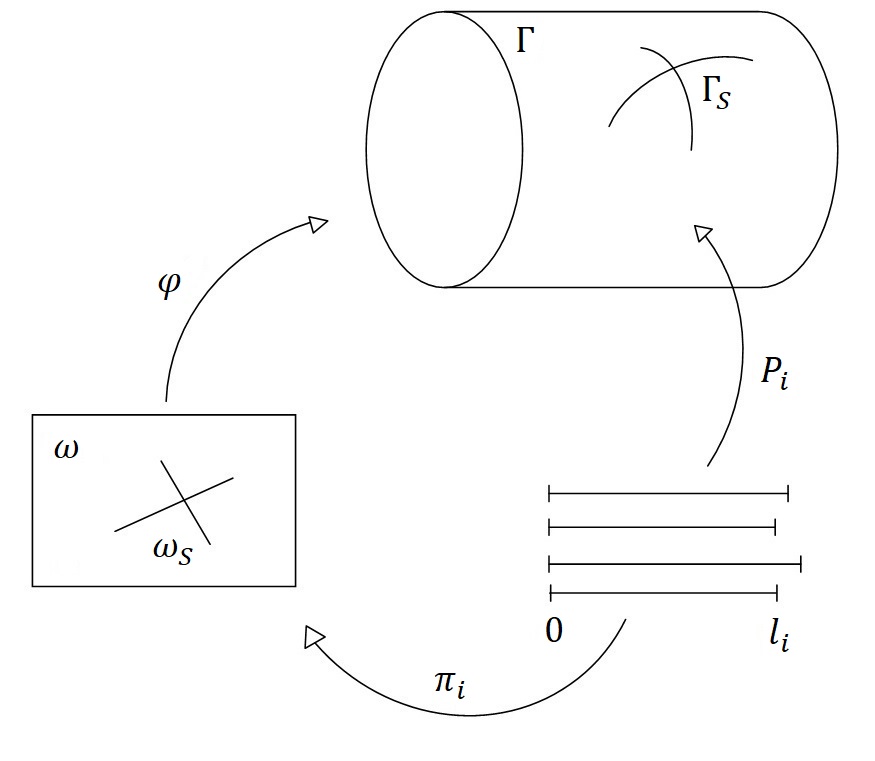}	
	\caption{Parameterization of the mesh and shell}
	\label{fig:parameterizations}
\end{figure}

The reference configuration of the mesh as a subset of $\omega$ will be denoted by $\omega_S=\bigcup_{i=1}^{n_E}\bpi_i([0,l_i])$, and the reference configuration of the mesh 
as a subset of $\Gamma$ will be denoted by $\Gamma_S=\bigcup_{i=1}^{n_E}\bP_i([0,l_i])$, see Fig.~\ref{fig:parameterizations}.
The coupling between the elastic mesh and shell is defined via the kinematic and dynamic coupling conditions.

The {\bf{kinematic coupling condition}} states that the displacement of the shell at the point\\ $(z,R\cos\theta,R\sin\theta)\in \Gamma$, that is associated with the point $(z,\theta)\in \omega_S$ via the mapping $\bvarphi$, is equal to the displacement of the mesh at the point $s_i=\bpi_i^{-1}(z,\theta)$, that is associated to the same point $(z,\theta)\in \omega_S$ via the mapping $\bpi_i.$
For a point $s_i\in [0,l_i]$ such that $\bpi_i(s_i)=(z,\theta)\in\omega_S$, the kinematic coupling condition reads:
\begin{equation}
\be(t,\bpi_i(s_i))=\bd_i(t,s_i).
\end{equation}

The {\bf{dynamic coupling condition}} describes the balance of forces. The force exerted by the Koiter shell onto the mesh is balanced by the force exerted by the mesh onto the Koiter shell. More precisely, let $J_i=\bpi_i([0,l_i])$, and $$\langle \delta_{J_i},f\rangle=\int_{J_i}fd\gamma_i, \quad i =1,\dots, n_E,$$
where $d\gamma_i$ is the curve element associated with the parameterization $\bpi_i$.
The weak formulation of the shell \eqref{shell} can then be written as:
\begin{align*}
\rho_K h \int_{\omega}\partial_t^2\be\cdot \bpsi R+ 
\langle\mathcal{L}\be,\bpsi\rangle  &= \dis\sum_{i=1}^{n_E}\langle \delta_{J_i},\bff\cdot\bpsi R\rangle
= \sum_{i=1}^{n_E}\int_{J_i}\bff(z,\theta)\cdot\bpsi(z,\theta)Rd\gamma_i\\
&= \sum_{i=1}^{n_E}\int_{0}^{l_i}\bff(\bpi_i(s))\cdot\bpsi(\bpi_i(s))\|\bpi_i'(s)\|Rds.
\end{align*}
If we denote by $\bff_i$ the force exerted by the $i$-th mesh strut onto the shell, 
by force balance, the right-hand side has to be equal to $-\dis\sum_{i=1}^{n_E}\int_{0}^{l_i}\bff_i(s)\cdot\bxi_i(s)ds$.
Thus, 
$$ \bff(\bpi_i(s_i))\|\bpi_i'(s_i)\|R= -\bff_i(s_i), \quad {\rm i.e.}, \quad \bff(\bpi_i(s_i))R=-\dis\frac{\bff_i(s_i)}{\|\bpi_i'(s_i)\|},\  s_i \in (0,l_i).$$
For a point $(z,\theta) = \bpi_i(s_i) \in \omega_S$, which came from an $s_i \in (0,l_i)$, the dynamic coupling condition reads: 
$ \dis \bff R=- \frac{\bff_i\circ\bpi_i^{-1}}{\|\bpi_i'\circ \bpi_i^{-1}\|}.$
For an arbitrary point $(z,\theta)\in \omega$, the dynamic coupling condition reads:
\begin{equation}
\dis \bff R = -\sum_{i=1}^{n_E}\frac{\bff_i\circ\bpi_i^{-1}}{\|\bpi_i'\circ \bpi_i^{-1}\|}\delta_{J_i},
\end{equation}
which implies the following weak formulation for the coupled mesh-shell problem:
\begin{equation}
\rho_K h \int_{\omega}\partial_t^2\be\cdot \bpsi R+ 
\langle\mathcal{L}\be,\bpsi\rangle  =
-\sum_{i=1}^{n_E} \langle \delta_{J_i}, \frac{\bff_i\circ\bpi_i^{-1}}{\|\bpi_i'\circ \bpi_i^{-1}\|}\bpsi \rangle,\quad \forall \bpsi \in V_K.
\end{equation}
Here $\bff_i$'s are defined in \eqref{stent_weak}, and the test functions $\bxi_i$ are such that 
$\bpsi \circ \bpi_i = \bxi_i$.
The coupled mesh-shell  weak solution space is given by:
$$V_{KS}= \{(\be,\bd,\bw)\in V_K\times V_S:\be\circ\bpi = \bd\text{ on }\prod_{i=1}^{n_E}(0,l_i)\},$$
where  $\be\circ\bpi=(\be\circ\bpi_1,\dots,\be\circ\bpi_{n_E})$.

\subsection{The fluid-structure coupling}

From this point on, we refer to the Koiter shell coupled with an elastic mesh as the ``composite structure''. The coupling between the fluid and the
composite structure is defined by two sets of boundary conditions satisfied at the lateral boundary $\Gamma^{\be}(t),$ giving rise to a \emph{nonlinear fluid-structure coupling.} They are the kinematic and dynamic boundary conditions. 

The {\bf{kinematic coupling condition}} describes continuity of velocity at the moving interface:
 $$\partial_t\be = (\bu\circ\bphi^{\be})|_{\Gamma}\circ\bvarphi \ {\rm on} \ (0,T)\times \omega.$$
 
 The {\bf{dynamic coupling condition}} describes balance of forces at the fluid-structure interface:
$$\rho_Kh\partial_t^2 \be R + \mathcal{L}\be + \dis\sum_{i=1}^{n_E}\dis \frac{\bff_i\circ\bpi_i^{-1}}{\|\bpi_i'\circ \bpi_i^{-1}\|}\delta_{J_i} = -J((\bsigma\circ\bphi^{\be})|_{\Gamma}
	\circ\bvarphi)((\bn\circ\bphi^{\be})|_{\Gamma}\circ\bvarphi) \ {\rm on} \ (0,T)\times \omega,$$
where $J$ denotes the Jacobian of the composite transformation from 
Eulerian to Lagrangian coordinates and the transformation from Cartesian to cylindrical coordinates, 
and $\bn,$ which depends on $\be,$ denotes the outer unit normal on $\Gamma^{\be}(t)$ at the point $\bphi^{\be}(t,\bvarphi(z,\theta)).$ 

In summary, we study the following fluid-structure interaction problem.
\vskip 0.05in
\textbf{Main Problem:} Find $(\bu,p,\be,\bd,\bw)$ such that

\begin{equation}\label{FSIeq1}
\small
\left.
\begin{array}{rcl}
\rho_F\left(\partial_t{\bu} + (\bu\cdot\nabla)\bu\right)&=&\nabla\cdot\bsigma  \\ 
\nabla\cdot \bu&=&0  
\end{array}
\right\}
\ \textrm{in}\;\Omega^{\be}(t),t\in (0,T), 
\end{equation}

\begin{equation}\label{FSIeq2}
\small
\left.
\begin{array}{rcl}
\partial_t\be &=& (\bu\circ\bphi^{\be})|_{\Gamma}\circ\bvarphi\\
\rho_Kh\partial_t^2 \be R + \mathcal{L}\be + \dis\sum_{i=1}^{n_E}\dis \frac{\bff_i\circ\bpi_i^{-1}}{\|\bpi_i'\circ \bpi_i^{-1}\|}\delta_{J_i} &=&
-J((\bsigma\circ\bphi^{\be})|_{\Gamma}
\circ\bvarphi)
((\bn\circ\bphi^{\be})|_{\Gamma}\circ\bvarphi)
\end{array}
\right\} \ \textrm{on}\; (0,T)\times\omega,
\end{equation}

\begin{equation}\label{FSIeq3}
\small
\left.
\begin{array}{rcl}
\rho_S A_i\partial_t^2 \bd_i &=&  \partial_s\bp_i  + \bff_i\\
\rho_S M_i \partial_t^2 \bw_i &= & \partial_s\bq_i + \bt_i \times \bp_i  \\
0 &= & \partial_s\bw_i- Q_iH_i^{-1}Q_i^T\bq_i \\
0 &= & \partial_s\bd_i + \bt_i \times \bw_i 
\end{array}
\right\}
\ \textrm{on}\; (0,T)\times (0,l_i), i\in(1,\dots,n_E).
\end{equation}
Problem \eqref{FSIeq1}-\eqref{FSIeq3} is supplemented with the following set of boundary and initial conditions:
\begin{equation}\label{FSIeqBC}
\small
\left \{
\begin{array}{rcll}
\dis p+\frac{\rho_F}{2}|\bu|^2&=& P_{in/out}(t), \;  \; &\textrm{on}\; (0,T)\times\Gamma_{in/out},\\
\bu\times \mathbf{e}_z &=& 0, \;  \; &\textrm{on}\;(0,T)\times\Gamma_{in/out},\\
\be(t,0,\theta)&=&\be(t,L,\theta)=0,\;\; &\textrm{on}\;(0,T)\times (0,2\pi),\\
\partial_z \eta_r(t,0,\theta) &= &\partial_z \eta_r(t,L,\theta)=0,\;\; &\textrm{on}\;(0,T)\times (0,2\pi),\\
\be(t,z,0)&=&\be(t,z,2\pi),\;\; &\textrm{on}\;(0,T)\times (0,L),\\
\partial_\theta \eta_r(t,z,0) &=&\partial_\theta \eta_r(t,z,2\pi),\;\; &\textrm{on}\;(0,T)\times (0,L),
\end{array}
\right .
\end{equation}
\begin{equation}\label{FSIeqIC}
\small
\begin{array}{c}
\bu(0)=\bu_0,\; \be(0)=\be_0,\; \partial_t\be(0)=\bv_0,\\
\bd_i(0)=\bd_{0i},\; \partial_t\bd_i(0)=\bk_{0i},\;
\bw_i(0)=\bw_{0i},\; \partial_t\bw_i(0)=\bz_{0i}.
\end{array}
\end{equation}
\vskip 0.05in
This is a nonlinear, moving boundary problem in 3D, which captures the full, two way fluid-structure interaction coupling. 
The nonlinearity in the problem is represented by the quadratic term in the fluid equations, 
and by the geometric nonlinearity due to the fluid-structure coupling at the current location of the moving boundary
$\Gamma^{\be}(t)$, which is one of the unknowns in the problem. 

\section{The energy of the coupled problem}
Problem \eqref{FSIeq1}-\eqref{FSIeqIC} satisfies the following energy inequality:
\begin{equation}\label{energy}
\frac{d}{dt} E(t) + D(t) \leq C(P_{in}(t),P_{out}(t)),
\end{equation}
where $E(t)$ denotes the sum of the total kinetic and elastic energy:
\begin{align}
E(t) &= \frac{\rho_F}{2}\|\bu\|_{L^2(\Omega^{\be}(t))}^2 + \frac{\rho_K h}{2}\|\partial_t\be\|_{L^2(R;\omega)}^2 + \frac{\rho_S}{2}\sum_{i=1}^{n_E}A_i\|\partial_t \bd_i \|_{L^2(0,l_i)}^2
\nonumber
\\
&\quad +
\frac{\rho_S}{2}\|\partial_t\bw\|_m^2 + \frac{1}{2}\langle \mathcal{L}\be,\be\rangle + \|\bw\|_r^2,
\label{E}
\end{align} 
and $D(t)$ denotes dissipation due to fluid viscosity:
$$D(t) = 2\mu_F \|\bD(\bu)\|_{L^2(\Omega^{\be}(t))}^2.$$
The constant $C(P_{in}(t),P_{out}(t))$ depends only on the inlet and outlet pressure data, which are both functions of time.

The norms $ \|\bw\|_m$ and $ \|\bw\|_r$ in \eqref{E} denote the kinetic and elastic energy of the mesh:
$$
\dis \|\bw\|_m^2 := \sum_{i=1}^{n_E}\|\bw_i\|_m^2 = \sum_{i=1}^{n_E}\int_0^{l_i}M_i\bw_i\cdot\bw_i,\ 
\dis\|\bw\|_r^2:=\sum_{i=1}^{n_E}\|\bw_i\|_r^2 =\sum_{i=1}^{n_E}\int_0^{l_i}Q_iH_iQ_i^T \partial_s\bw_i\cdot\partial_s\bw_i,
$$
and $\|\be\|_{L^2(R;\omega)}$ denotes the weighted $L^2$-norm on $\omega$ associated with the kinetic energy of the Koiter shell:
$$
\|\be\|^2_{L^2(R;\omega)} := \int_\omega |\be|^2 R \ d\omega,
$$
where the weight $R$ comes from the geometry of 
the Koiter shell (Jacobian).

\begin{remark}
	The norm $\|\cdot\|_m$ is equivalent to the standard $L^2(\mathcal{N})$ norm.
\end{remark}

The formal energy inequality \eqref{energy} can be derived in a standard way as follows.
First, after multiplying the first equation in \eqref{fluid} by $\bu$ and integrating over $\Omega^{\be}(t)$ we obtain:
\begin{align}\label{NScalc}
\int_{\Omega^{\be}(t)}\rho_F\left(\partial_t\bu\cdot \bu + (\bu\cdot\nabla)\bu \cdot\bu\right)=\int_{\Omega^{\be}(t)}(\nabla\cdot\bsigma)\cdot \bu.
\end{align} 
The first term on the left-hand side can be rewritten by using the Reynold's transport theorem and keeping in 
mind that the velocity of the lateral boundary equals $\bu|_{\Gamma^{\be}(t)}$:
\begin{align*}
\int_{\Omega^{\be}(t)}\partial_t\bu \cdot \bu &= \frac{d}{dt} \int_{\Omega^{\be}(t)} \frac{|\bu|^2}{2} - \int_{\partial\Omega^{\be}(t)}\frac{|\bu|^2}{2}\bu\cdot\bn
= \frac{1}{2}\frac{d}{dt}\int_{\Omega^{\be}(t)}|\bu|^2 - \frac{1}{2}\int_{\Gamma^{\be}(t)}|\bu|^2 \bu\cdot \bn.
\end{align*}
The convective part of the Navier-Stokes equations can be rewritten by using integration by parts and divergence-free condition to obtain:
\begin{align*}
\int_{\Omega^{\be}(t)}(\bu\cdot\nabla)\bu\cdot \bu &= \frac{1}{2}\int_{\Omega^{\be}(t)}\nabla\cdot(|\bu|^2\bu)\\
&=\frac{1}{2}\int_{\Gamma^{\be}(t)}|\bu|^2\bu\cdot \bn
- \frac{1}{2}\int_{\Gamma_{in}}|\bu|^2\bu\cdot \mathbf{e}_z
+ \frac{1}{2}\int_{\Gamma_{out}}|\bu|^2\bu\cdot \mathbf{e}_z.
\end{align*}
These two terms added together give:
\begin{align}\label{NS_left}
\begin{split}
\int_{\Omega^{\be}(t)}\partial_t \bu \cdot \bu + \int_{\Omega^{\be}(t)}(\bu\cdot\nabla)\bu\cdot \bu
&= \frac{1}{2}\frac{d}{dt}\|\bu\|_{L^2(\Omega^{\be}(t))}^2
- \frac{1}{2}\int_{\Gamma_{in}}|\bu|^2u_z + \frac{1}{2}\int_{\Gamma_{out}}|\bu|^2u_z.
\end{split}
\end{align}
Next, using integration by parts the right-hand side of \eqref{NScalc} can be rewritten as:
\begin{equation*}
\int_{\Omega^{\be}(t)}(\nabla\cdot\bsigma)\cdot\bu =
\int_{\partial\Omega^{\be}(t)}\bsigma\bn\cdot\bu - 2\mu_F \int_{\Omega^{\be}(t)}|\bD(\bu)|^2.
\end{equation*}

To deal with the boundary integral on the right-hand side we first notice that on $\Gamma_{in/out}$ the boundary condition $\bu\times \mathbf{e}_z=0$ implies $u_x =u_y=0$.  Using the divergence-free condition we also obtain $\partial_z u_z = 0.$ These two facts combined imply that $\bD(\bu)\bn\cdot\bu=0$ on $\Gamma_{in/out}.$ 
Finally, because the normal on $\Gamma_{in/out}$ is $\bn=(\mp 1,0,0),$ we get:
\begin{align}\label{NS_right}
\begin{split}
\int_{\partial\Omega^{\be}(t)}\bsigma\bn\cdot\bu 
= \int_{\Gamma^{\be}(t)}\bsigma\bn\cdot\bu + \int_{\Gamma_{in}}pu_z - \int_{\Gamma_{out}}pu_z.
\end{split}
\end{align}
What is left is to calculate the boundary integral over $\Gamma^{\be}(t).$  By enforcing the kinematic and dynamic coupling conditions on $\omega,$ we obtain:
\begin{align}\label{boundary1}
\begin{split}
-\int_{\Gamma^{\be}(t)}\bsigma \bn \cdot \bu &= -\int_{\omega}J((\bsigma\circ\bphi^{\be})|_{\Gamma}\circ\bvarphi) ((\bn\circ\bphi^{\be})|_{\Gamma}\circ\bvarphi) \cdot ((\bu\circ\phi^{\be})|_{\Gamma}\circ\bvarphi)\\
&= \int_{\omega} \bff\cdot\partial_t\be R + \sum_{i=1}^{n_E}\int_{J_i}   \frac{\bff_i\circ\bpi_i^{-1}}{\|\bpi_i'\circ \bpi_i^{-1}\|}\delta_{J_i}\cdot \partial_t \be \\
&= \int_{\omega} \bff\cdot \partial_t\be R + \sum_{i=1}^{n_E}\int_{0}^{l_i}  \bff_i\cdot \partial_t \be\circ\bpi_i
= \int_{\omega} \bff\cdot \partial_t\be R + \sum_{i=1}^{n_E}\int_{0}^{l_i}  \bff_i\cdot\partial_t \bd_i.
\end{split}
\end{align}
Next we multiply the Koiter shell equation \eqref{shell_diff} by $\partial_t\be$ 
and integrate over $\omega,$ and use $(\partial_t \bd,\partial_t \bw)
=((\partial_t \bd_1,\partial_t \bw_1),\dots,(\partial_t \bd_{n_E},\partial_t \bw_{n_E}))$ 
as a test function in the weak formulation for the mesh problem \eqref{stent_weak} 
to obtain that \eqref{boundary1} equals:
\begin{align}\label{boundary2}
\begin{split}
-\int_{\Gamma^{\be}(t)}\bsigma \bn \cdot \bu
&= \frac{\rho_K h}{2}\frac{d}{dt}\|\partial_t\be\|_{L^2(R;\omega)}^2
+ \frac{1}{2}\frac{d}{dt}\langle \mathcal{L}\be,\be\rangle
+ \frac{\rho_S}{2}\frac{d}{dt}\sum_{i=1}^{n_E}A_i\|\partial_t \bd_i\|_{L^2(0,l_i)}^2 \\
&\quad + \frac{\rho_S}{2}\frac{d}{dt}\sum_{i=1}^{n_E}\|\partial_t \bw_i\|_m^2
+ \frac{d}{dt}\sum_{i=1}^{n_E}\|\bw_i\|_r^2.
\end{split}
\end{align}
Finally, by combining \eqref{boundary2} with \eqref{NS_right}, and by adding the remaining contributions to the energy of the FSI problem calculated in equations \eqref{NS_right} and \eqref{NS_left}, one obtains the following energy equality:
\begin{align}
\begin{split}
&\frac{\rho_F}{2}\frac{d}{dt}\|\bu\|_{L^2(\Omega^{\be}(t))}^2 + 2\mu_F \|\bD(\bu)\|_{L^2(\Omega^{\be}(t))}^2 +  \frac{\rho_K h}{2}\frac{d}{dt}\|\partial_t\be\|_{L^2(R;\omega)}^2\\
& + \frac{1}{2}\frac{d}{dt}\langle \mathcal{L}\be,\be\rangle + \frac{\rho_S}{2}\frac{d}{dt}\sum_{i=1}^{n_E}A_i\|\partial_t \bd_i \|_{L^2(0,l_i)}^2 + \frac{\rho_S}{2}\frac{d}{dt}\sum_{i=1}^{n_E}\|\partial_t \bw_i \|_m^2\\
& + \frac{d}{dt}\sum_{i=1}^{n_E}\|\bw_i\|_r^2 =  \int_{\Gamma_{in}}P_{in}(t)u_z - \int_{\Gamma_{out}}P_{out}(t)u_z.
\end{split}
\end{align}
Using the trace theorem, Korn's inequality and Cauchy inequality (with $\varepsilon$), one can estimate:
\begin{align*}
\begin{split}
\Big|\int_{\Gamma_{in/out}}P_{in/out}(t)u_z \Big| &\leq C |P_{in/out}| \|\bu\|_{H^1(\Omega^{\be}(t))}\leq C |P_{in/out}| \|\bD(\bu)\|_{L^2(\Omega^{\be}(t))}
\\ &\leq \frac{C}{2\varepsilon}|P_{in/out}|^2 + \frac{C\varepsilon}{2}\|\bD(\bu)\|_{L^2(\Omega^{\be}(t))}^2.
\end{split}
\end{align*}
We note that the fluid velocity $\bu$ indeed satisfies the conditions for Korn's inequality (Theorem~6.3-4 in \cite{CiarletVol1}),
i.e.,  $\bu=0$ on $\Gamma_{in/out}$. 
Namely, the boundary condition $\bu\times\mathbf{e}_z=0$ on $\Gamma_{in/out}$ 
and divergence-free condition $\nabla\cdot\bu=0$ imply $\partial_z u_z =0.$ 
From the kinematic coupling condition $u_z=\partial_t \eta_z$ (on $\omega$), we obtain $u_z=0$, so $\bu=0$ on $\Gamma_{in/out}$. 
Finally, by choosing $\varepsilon$ such that $\frac{C\varepsilon}{2}\leq\mu_F,$ we get the energy inequality \eqref{energy}.

\begin{remark}
 Notice that the constant in the trace inequality depends on the fluid domain,
which in our case depends on $\boldsymbol\eta$. To get an energy estimate in which the constant is
independent of  $\boldsymbol\eta$, one can use Gronwall's inequality, and obtain the result above in
which the constant C depends on time T.
 \end{remark}

\section{Weak solutions}
To define weak solutions to problem \eqref{FSIeq1}-\eqref{FSIeqIC} we introduce the necessary function spaces. For the fluid velocity we will be using
 the following classical function space:
\begin{equation}\label{fluid_space}
V_F(t)=\{\bu\in H^1(\Omega^{\be}(t)): \nabla\cdot \bu = 0, \bu\times\mathbf{e}_z = 0\text{ on }\Gamma_{in/out}\}.
\end{equation}
Motivated by the energy inequality \eqref{energy}, we also define the following evolution spaces associated with the fluid problem, the Koiter shell problem, the elastic mesh problem, and the coupled mesh-shell problem, respectively:
\begin{itemize}
	\item $V_F(0,T)=L^\infty(0,T;L^2(\Omega^{\be}(t)))\cap L^2(0,T;V_F(t)),$
	where $V_F(t)$ is defined by \eqref{fluid_space},\smallskip
	\item $V_K(0,T)=W^{1,\infty}(0,T;L^2(R;\omega))\cap L^\infty(0,T;V_K),$
	where $V_K$ is defined by \eqref{shell_space},\smallskip
	\item $V_S(0,T)=W^{1,\infty}(0,T;L^2(\mathcal{N}))\cap L^\infty(0,T;V_S),$
	where $V_S$ is defined by \eqref{stent_space},\smallskip
	\item $\dis V_{KS}(0,T)=\{(\be,\bd,\bw)\in V_K(0,T)\times V_S(0,T):\be\circ\bpi = \bd\text{ on }\prod_{i=1}^{n_E}(0,l_i)  \}.$
\end{itemize}

\noindent
The solution space for the coupled fluid-mesh-shell interaction problem includes the kinematic coupling condition, which is, thus, enforced in a strong way:
$$\mathcal{V}(0,T)=\{(\bu,\be,\bd,\bw)\in V_F(0,T)\times V_{KS}(0,T):  (\bu\circ\bphi^{\be})|_{\Gamma}\circ\bvarphi = \partial_t\be\text{ on }\omega \}.$$

\noindent
The corresponding test space will be denoted by:
\begin{equation}\label{test_space_Q}
\mathcal{Q}(0,T)=\{(\bups,\bpsi,\bxi,\bzeta)\in C_c^1([0,T);V_F\times V_{KS}):  (\bups\circ\bphi^{\be})|_{\Gamma}\circ\bvarphi = \bpsi\text{ on }\omega \}.
\end{equation}

We are now in position to define weak solutions to our moving boundary, fluid-mesh-shell interaction problem.

\begin{defn}\label{def:weak}
	We say that $(\bu,\be,\bd,\bw)\in \mathcal{V}(0,T)$ is a weak solution of problem \eqref{FSIeq1}-\eqref{FSIeqIC}, if for all test functions $(\bups,\bpsi,\bxi,\bzeta)\in \mathcal{Q}(0,T)$ the following equality holds:
	\begin{align}\label{weak}
	\begin{split}
	&\rho_F\left( -\int_{0}^{T}\int_{\Omega^{\be}(t)}\bu\cdot\partial_t \bups + \int_{0}^T b(t,\bu,\bu,\bups) -\frac{1}{2}\int_{0}^T\int_{\Gamma^{\be}(t)}(\bu\cdot\bups)(\bu\cdot \bn) \right)\\
	&+2\mu_F\int_{0}^{T}\int_{\Omega^{\be}(t)}\bD(\bu):\bD(\bups)
	- \rho_K h \int_0^T\int_{\omega}\partial_t\be\cdot\partial_t \bpsi R + \int_0^T a_K(\be,\bpsi)\\
	&- \rho_S\sum_{i=1}^{n_E}A_i\int_0^T\int_0^{l_i}\partial_t \bd_i \cdot \partial_t \bxi_i - \rho_S\sum_{i=1}^{n_E}\int_0^T\int_0^{l_i}M_i\partial_t \bw_i \cdot \partial_t \bzeta_i \\
	&+ \int_0^T a_S(\bw,\bzeta) =\int_0^T\langle F(t),\bups\rangle_{\Gamma_{in/out}} + \rho_F\int_{\Omega}\bu_0\cdot \bups(0) +\rho_K h \int_{\omega}\partial_t\be_0\cdot\bpsi(0)R\\
	&+ \rho_S \sum_{i=1}^{n_E}A_i\int_{0}^{l_i}\partial_t\bd_{0i}\cdot \bxi_i(0) + \rho_S \sum_{i=1}^{n_E}\int_0^{l_i}M_i \partial_t\bw_{0i}\cdot \bzeta_i(0),
	\end{split}
	\end{align}
	where $$b(t,\bu,\bu,\bups)=\frac{1}{2}\int_{\Omega^{\be}(t)}(\bu\cdot\nabla)\bu\cdot\bups - \frac{1}{2}\int_{\Omega^{\be}(t)}(\bu\cdot\nabla)\bups\cdot \bu,$$
	$$ a_K(\be,\bpsi)=\langle \mathcal{L}\be,\bpsi\rangle,$$
	$$a_S(\bw,\bzeta)=\sum_{i=1}^{n_E}\int_{0}^{l_i}Q_iH_iQ_i^T\partial_s\bw_i\cdot\partial_s\bzeta_i,$$
	and $$\langle F(t),\bups\rangle_{\Gamma_{in/out}}=P_{in}(t)\int_{\Gamma_{in}}\upsilon_z - P_{out}(t)\int_{\Gamma_{out}}\upsilon_z.$$
\end{defn}
In deriving the weak formulation, we used integration by parts in a classical way. Here we only show the transformation of the fluid inertial and convective terms:
\begin{align*}
\rho_F\int_{\Omega^{\be}(t)}\partial_t\bu\cdot\bups &=
\rho_F\int_{\Omega^{\be}(t)}\partial_t(\bu\cdot\bups)- \rho_F\int_{\Omega^{\be}(t)}\bu\cdot\partial_t\bups\\
&=\rho_F\frac{d}{dt}\int_{\Omega^{\be}(t)}\bu\cdot\bups - \rho_F \int_{\partial\Omega^{\be}(t)}(\bu\cdot\bups)(\bu\cdot\bn)-\rho_F\int_{\Omega^{\be}(t)}\bu\cdot \partial_t\bups\\
&= - \rho_F \bu_0\cdot\bups(0) - \rho_F\int_{\Gamma^{\be}(t)}(\bu\cdot\bups)(\bu\cdot\bn)-\rho_F\int_{\Omega^{\be}(t)}\bu\cdot \partial_t\bups,
\end{align*}
\begin{align*}
\rho_F\int_{\Omega^{\be}(t)}(\bu\cdot\nabla)\bu\cdot\bups &=
\frac{\rho_F}{2}\int_{\Omega^{\be}(t)}(\bu\cdot\nabla)\bu\cdot\bups + \frac{\rho_F}{2}\int_{\Omega^{\be}(t)}(\bu\cdot\nabla)\bu\cdot\bups\\
&= \frac{\rho_F}{2}\int_{\partial\Omega^{\be}(t)}(\bu\cdot\bups)(\bu\cdot\bn)-
\frac{\rho_F}{2}\int_{\Omega^{\be}(t)}(\nabla\cdot\bu)\bu\cdot\bups\\
&\quad - \frac{\rho_F}{2}\int_{\Omega^{\be}(t)}(\bu\cdot\nabla)\bups\cdot\bu+
\frac{\rho_F}{2}\int_{\Omega^{\be}(t)}(\bu\cdot\nabla)\bu\cdot\bups\\
&= \rho_F b(t,\bu,\bu,\bups)
+ \frac{\rho_F}{2}\int_{\Gamma^{\be}(t)}(\bu\cdot\bups)(\bu\cdot\bn)
- \frac{\rho_F}{2}\int_{\Gamma_{in}}|\bu|^2\upsilon_z + 
\frac{\rho_F}{2}\int_{\Gamma_{out}}|\bu|^2\upsilon_z.
\end{align*}

\section{Lie splitting and ALE mapping}

Approximate solutions are constructed by discretizing the problem in time,
and by splitting the coupled problem into a fluid  and a structure subproblem using the Lie splitting strategy \cite{glowinski2003finite}.
To deal with the motion of the fluid domain, an Arbitrary Lagrangian-Eulerian (ALE) approach is used. 
These main tools in our constructive existence proof are outlined next.

\subsection{The Lie operator splitting scheme}
Let $A$ be an operator defined on a Hilbert space, such that $A$ can be written as a non-trivial sum $A=A_1+A_2.$
Consider the following  initial-value problem:
\begin{align*}
\dis\frac{d\phi}{dt}+A\phi &= 0\quad\text{in}\quad(0,T),\\
\phi(0) &= \phi_0.
\end{align*}
The time discretization using Lie operator splitting is defined by solving on each time interval $(t_n,t_{n+1})$, defined by
the discretization step $ \dt=T/N$, where $N\in\N$, and  $t_n=n\dt, n=0,\dots,N-1$,
the following two subproblems for $i = 1$ and $i = 2$:
\begin{align*}
\dis\frac{d\phi_i}{dt} + A_i\phi_i &= 0\quad\text{in}\quad(t_n,t_{n+1}),\\
\phi_i(t_n)&=\phi^{n+\frac{i-1}{2}},
\end{align*}
and then set $\phi^{n+\frac{i}{2}}=\phi_i(t_{n+1}).$
Thus, the two problems communicate only via the initial data, mimicking the famous Lie-Trotter product formula for exponentials
as solutions to $\phi' = -A \phi = -(A_1+A_2) \phi$.

To apply this strategy to our coupled FSI problem, we rewrite the problem as a first-order system in time by 
introducing three new variables corresponding to structure velocities:
the Koiter shell velocity $\bv = \partial_t\be$, 
the mesh velocity $\bk = \partial_t \bd$,
and the mesh angular velocity $\bz = \partial_t \bw.$

The coupled problem \eqref{FSIeq1}-\eqref{FSIeqIC} is split into two subproblems, a fluid and a structure subproblem, see Sec.~\ref{splitting},
and in each time step 
 the structure subproblem is first solved on $(t_n,t_{n+1})$ with  the solution of the fluid subproblem from the previous time step serving as the initial data.
Then
 the fluid subproblem  is solved on $(t_n,t_{n+1})$ with the solution of the just calculated structure subproblem serving as the initial data.

In the structure subproblem, the structure is driven by the initial velocity obtained from the trace of the fluid velocity in the previous time step. The fluid velocity $\bu$ remains unchanged.
In the fluid subproblem, the Navier-Stokes equations are driven by a 
"Robin-type" boundary condition on $\omega$ (i.e., $\Gamma)$ which involves the shell inertia and the trace of the fluid normal stress. 
In this step, the structure displacement, the velocity of the mesh displacement and the velocity of infinitesimal rotation of cross sections remain unchanged. 

The inclusion of shell inertia into the fluid subproblem 
guarantees energy balance at the time-discrete level, thereby avoiding stability problems due to the so called added mass effect.  Here we emphasize
that there is no added mass effect associated with the mesh since the fluid
velocity does not have the trace defined on the 1D set describing the mesh,
and therefore it is enough to include only the shell inertia into the fluid
subproblem. The shell inertia is affected by the presence of the mesh, accounted for in the structure subproblem.

Before we can apply the Lie splitting method to our problem,  we first need to explain how to deal with the difficulties 
associated with  the motion of the fluid domain boundary.
One difficulty is related to the possible geometric degeneracy of the fluid domain boundary, and the other to the fact that at every time step the fluid subproblem
is defined on a different domain.
The following two subsections deal with these two issues.

\subsection{Reparameterization of the fluid domain boundary as a subgraph}
First, recall that the lateral boundary of the fluid domain, which coincides with the fluid-structure interface, is given by:
\begin{equation*}
\Gamma^{\be}(t)=\{(z+\eta_z(t,z,\theta),R+\eta_r(t,z,\theta),\theta+\eta_\theta(t,z,\theta)):z\in (0,L),\theta\in(0,2\pi) \}.
\end{equation*}
Due to the fact that the structure/shell is moving in all three spatial directions, see Assumption~\ref{3D_displ},
 the fluid domain boundary may degenerate in such a way that it ceases to be a subgraph of a function. To avoid such a degeneracy, we introduce the following:
\begin{assumption}\label{subgraph}
There exists a time $T>0$ such that for every $t\leq T, \Gamma^{\be}(t)$ remains a subgraph of a function. 
\end{assumption}
Under this assumption,  the lateral boundary of the fluid domain can be reparameterized 
in such a way that the radial displacement becomes the only unknown. 
This will be useful in explicitly writing the Arbitrary Lagrangian-Eulerian (ALE) mapping, discussed in the next section.
We remark that this is not a necessary condition under which the result of this manuscript holds. 
Assumption~\ref{subgraph} simplifies the proof as it is used in the explicit construction of the ALE mapping. 

More precisely, introduce 
\begin{align}\label{reparam}
\begin{split}
\zt=z+\eta_z(t,z,\theta),\\
\et(t,\zt,\tht) = \eta_r(t,z,\theta),\\
\tht = \theta + \eta_\theta(t,z,\theta),
\end{split}
\end{align}
and define the reparameterized lateral boundary of the fluid domain to be
\begin{equation}\label{reparam_bdry}
\Gamma^{\bet}(t)=\{(\zt,R+\et(t,\zt,\tht),\tht):\zt\in (0,L),\tht\in(0,2\pi) \}.
\end{equation}
\noindent
It is easy to check that displacement $\et$ satisfies the following:
\begin{equation*}
\et(t,\zt,\tht) = \eta_r\circ (\text{id}_z+\eta_z,\text{id}_\theta+\eta_\theta)^{-1}(t,\zt,\tht),
\end{equation*}
where $\text{id}_z$ and $\text{id}_\theta$ are projections of the identity to the second and third variable. 

With this reparameterization the shell displacement is given by the function 
\begin{equation}\label{eta_repara}
\bet=\et\mathbf{e}_r,
\end{equation}
 where $\mathbf{e}_r=\mathbf{e}_r(\theta)=(0,\cos\theta,\sin\theta)$ is the unit vector in the radial direction. 
Notice that the reparameterization \eqref{reparam_bdry} is well-defined if for every $t$ the following mapping:
\begin{equation}\label{inject}
\bg:(0,T)\times\omega\to\R^2, \quad \bg(t,z,\theta) =(z+\eta_z(t,z,\theta),\theta + \eta_\theta(t,z,\theta))
\end{equation}
is an injection from $\omega$ to $\R^2$.
We will see later that this will, indeed, be true for our FSI problem as a consequence of 
Assumption~\ref{LipschitzAssumption} on approximate structure displacements, introduced in Sec.~\ref{sec:StrongConvergence}. 

\subsection{An Arbitrary Lagrangian-Eulerian (ALE) mapping}\label{sec:ale}
Since the fluid domain moves in time, 
 at each time step $t_n=n\dt,\;n=0,\dots,N-1,$ the fluid subproblem has to be solved on a different fluid domain, defined by
\begin{equation}\label{phi}
\Omega^n=\bphi^n(\Omega):=\bphi^{\bet}(n\dt,\Omega).
\end{equation} 
This presents a significant difficulty when studying existence of solutions to this class of problems. 
One way to deal with this difficulty is to map the current fluid domain onto a fixed, reference domain $\Omega$,
and work with the entire problem
reformulated on $\Omega$,
as was done in our earlier works, see e.g., \cite{MuhaCanic13,BorSun3d,BorSunMultiLayered}. This, however, introduces additional nonlinearities in the problem, especially in the definition of differential operators such as divergence. 
Moreover, the mapped velocity is not divergence-free, but rather it is divergence-free only in terms of the mapped, nonlinear gradient operator. 
Proving Korn's inequality for the transformed divergence operator is nontrivial, and obtaining a compactness result for the transformed
problem is even more complicated. Therefore, often times the problem needs to be mapped back onto the physical 
domain to deal with those issues. 

Another way to deal with the motion of the fluid domain is to adopt the strategy which has been used in
numerical ALE-based algorithms since the early 1980's \cite{donea1983arbitrary,hughes1981lagrangian},
but never in existence proofs. 
In this approach the fluid domain is updated at every time step, 
the problem is studied in the physical domain, the gradient operator is the 
gradient operator in the physical space, and the time derivative of fluid velocity 
is calculated using the ALE derivative, as shown below.
Because the problem is posed in the physical space,
Korn's inequality can be used in the standard way, and compactness arguments are easier to construct. 
Thus, we introduce the following mapping:
 \begin{equation*}
\boldsymbol{\mathcal{A}}^{n+1,n}:\Omega^{n+1}\to \Omega^{n},\quad
\boldsymbol{\mathcal{A}}^{n+1,n}=\bphi^{n}(\cdot)\circ \bphi^{n+1}(\cdot)^{-1},
\end{equation*}
which is explicitly written in terms of the location of the lateral boundary at times $n$ and $n+1$ as follows:
\begin{equation}\label{ale_disc}
\boldsymbol{\mathcal{A}}^{n+1,n}
(\zt, \rt, \tht)
 =
\left(\zt,
\frac{R+\et^{n}(\zt,\tht)}{R+\et^{n+1}(\zt,\tht)}\rt,
\tht \right),
\end{equation}
where $(\zt,\rt,\tht)$ denote the cylindrical coordinates in the reference domain $\Omega^{n+1}.$ 
This is a discrete analogue of the mapping $\bAc$ defined by:
\begin{equation*}
\bAc(t):\Omega^{n+1}\to \Omega^{\bet}(t), \quad \bAc(t)=\bphi^{\bet}(t,\cdot)\circ\bphi^{n+1}(\cdot)^{-1}, \ \forall t\in[t^n,t^{n+1}),
\end{equation*}
which can be explicitly written,
using the reparameterization of the fluid domain boundary described in the previous subsection,  by the following explicit formula:
\begin{align}\label{ale_cont}
\bAc(t):\Omega^{n+1}\to \Omega^{\bet}(t),\quad
\bAc(t)
(\zt, \rt, \tht)
=
\left(\zt,
\frac{R+\et(t,\zt,\tht)}{R+\et^{n+1}(\zt,\tht)}\rt,
\tht \right),
\end{align} 
where $(\zt,\rt,\tht)$ denote the cylindrical coordinates in the discrete physical domain $\Omega^{n+1}.$ Due to the fact that we are working with the Navier-Stokes equations written in Cartesian coordinates, it is useful to write an explicit form of the ALE mapping $\bAc(t)$ in the Cartesian coordinates as well:
\begin{equation*}
\bAc(t)
(z, x , y)
=
\left(z,
\frac{R+\et(t,\zt,\tht)}{R+\et^{n+1}(\zt,\tht)}x,
\frac{R+\et(t,\zt,\tht)}{R+\et^{n+1}(\zt,\tht)}y
\right).
\end{equation*}
Mapping $\bAc(t)$ is a bijection, and its Jacobian $\bSc$ and the ALE domain velocity $\bsc$ are respectively given by:
\begin{align}
\begin{split}
\bSc(t)&=|\det \nabla \bAc (t)|=\left| \frac{R+\et(t,\zt,\tht)}{R+\et^{n+1}(\zt,\tht)}  \right|^2,\\
\bsc &= \partial_t \bAc(t) = \frac{\partial_t\et(t,\zt,\tht)}{R+\et^{n+1}(\zt,\tht)}\rt \mathbf{e}_r.
\end{split}
\end{align}
We define the ALE derivative as the time derivative evaluated on the domain $\Omega^{n+1}:$
\begin{equation*}
\partial_t\bu|_{\Omega^{n+1}}=\partial_t \bu + (\bsc\cdot\nabla)\bu.
\end{equation*}
Using the ALE mapping, we can rewrite the Navier-Stokes equations in the ALE formulation as follows:
\begin{equation*}
\partial_t\bu|_{\Omega^{n+1}}+((\bu-\bsc)\cdot\nabla)\bu=\nabla\cdot\bsigma.
\end{equation*}

\noindent
Discrete versions of the Jacobian and the ALE domain velocity are given by:
\begin{equation*}
\bS=|\det \nabla \bA|=\left|\frac{R+\et^{n}(\zt,\tht)}{R+\et^{n+1}(\zt,\tht)}\right|^2,
\end{equation*}

\begin{equation}\label{sn}
\bs=\frac{\et^{n+1}(\zt,\tht)-\et^{n}(\zt,\tht)}{\dt(R+\et^{n+1}(\zt,\tht))}\rt\mathbf{e}_r.      
\end{equation}
Composite functions with this ALE mapping will be denoted by 
\begin{equation}\label{ale_comp_fun}
\buh^n=\bu^n\circ\bA.
\end{equation}

\section{Approximate solutions}\label{splitting}
We use the Backward Euler scheme to discretize the problem in time,
and Lie splitting to separate the fluid from a structure subproblem. 
Let $\Delta t=T/N$ be the time discretization parameter so that the time interval $(0,T)$ is subdivided into $N$ subintervals of width $\Delta t$. 
For $n=0,1,\dots,N-1$, define the vector of unknown approximate solutions 
$$\mathbf{X}_N^{n+i/2}=(\bu_N^{n+i/2},\be_N^{n+i/2},\bv_N^{n+i/2},\bd_N^{n+i/2},\bw_N^{n+i/2},\bk_N^{n+i/2},\bz_N^{n+1/2}),$$
where $i=1,2$ denotes the solution of the structure and of the fluid subproblem, respectively. 

We aim at performing the time discretization via Lie operator splitting
in such a way that the discrete version of the energy inequality \eqref{energy} is preserved at every time step.
We define the semi-discrete versions of the kinetic and elastic energy, and of dissipation, by the following:
\begin{align}\label{disc_kinetic}
\begin{split}
E_N^{n+i/2}&=\rho_F\int_{\Omega^{n+1}}|\bu_N^{n+1}|^2+\rho_K h\int_{\omega}|\bv_N^{n+i/2}|^2R + a_K(\be_N^{n+i/2},\be_N^{n+i/2})\\
&\quad+ \rho_S \sum_{i=1}^{n_E}A_i\int_0^{l_i}|(\bk_N^{n+i/2})_i|^2 + \rho_S \sum_{i=1}^{n_E}\int_0^{l_i}M_i|(\bz_N^{n+i/2})_i|^2\\
&\quad+ a_S(\bw_N^{n+i/2},\bw_N^{n+i/2}),\quad i=1,2,
\end{split}
\end{align}
\begin{align}\label{disc_dissip}
D_N^{n+1}=2\dt\mu_F\int_{\Omega^{n+1}}|\bD(\bu_N^{n+1})|^2, \quad n=0,1,\dots,N-1.
\end{align}
Throughout the rest of this section, we fix the time step $\dt,$ i.e. we keep $N\in\N$ fixed, and study the semi-discretized subproblems defined by the Lie splitting. To simplify notation, we omit the subscript $N$ and write $(\bu^{n+i/2},\be^{n+i/2},\bv^{n+i/2},\bd^{n+i/2},\bw^{n+i/2},\bk^{n+i/2},\bz^{n+1/2}),$ instead of\\ $(\bu_N^{n+i/2},\be_N^{n+i/2},\bv_N^{n+i/2},\bd_N^{n+i/2},\bw_N^{n+i/2},\bk_N^{n+i/2},\bz_N^{n+1/2}).$

\subsection{The semi-discretized structure subproblem}
In this step the fluid velocity $\bu$ does not change, so
$$\bu^{n+1/2} = \bu^n. $$
The structure variables $(\be^{n+1/2},\bv^{n+1/2},\bd^{n+1/2},\bw^{n+1/2},\bk^{n+1/2},\bz^{n+1/2})\in W_S$
are calculated as the solution of the following problem, written in weak form:
\begin{align}\label{sub_struct}
&\dis \rho_K h \int_{\omega}\frac{\bv^{n+1/2}-\bv^n}{\dt}\cdot\bpsi R + a_K(\be^{n+1/2},\bpsi)
+\dis \rho_S\sum_{i=1}^{n_E}A_i\int_{0}^{l_i}\frac{\bk_i^{n+1/2}-\bk_i^n}{\dt}\cdot\bxi_i \nonumber\\
&\ \ \ \ \ \ \ \ +\dis \rho_S\sum_{i=1}^{n_E}\int_{0}^{l_i}M_i\frac{\bz_i^{n+1/2}-\bz_i^n}{\dt}\cdot\bzeta_i
+a_S(\bw^{n+1/2},\bzeta)= 0,\nonumber\\
&\ \ \ \ \dis \int_\omega \frac{\be^{n+1/2}-\be^n}{\dt} \cdot \bpsi R = \int_\omega \bv^{n+1/2}\cdot \bpsi R ,\\
&\left.
\begin{array}{rcl}
\dis \int_0^{l_i} \frac{\bd_i^{n+1/2}-\bd_i^n}{\dt} \cdot \bxi_i &=&\dis \int_0^{l_i} \bk_i^{n+1/2}\cdot \bxi_i,\nonumber\\
\dis \int_0^{l_i} \frac{\bw_i^{n+1/2}-\bw_i^n}{\dt} \cdot \bzeta_i &=&\dis \int_0^{l_i}  \bz_i^{n+1/2}\cdot \bzeta_i,
\end{array}
\right\} \  i = 1,\dots,n_E,	   
\nonumber
\end{align}
for all the test functions $(\bpsi,\bxi,\bzeta)\in V_{KS},$
where the solution space is defined by:
\begin{equation*}
W_S:= \{ (\be,\bv,\bd,\bw,\bk,\bz) \in V_K\times H^2(\omega)\times V_S\times H^1(\mathcal{N}) \times H^1(\mathcal{N}):
\be\circ\bpi=\bd\text{ on }\prod_{i=1}^{n_E}(0,l_i)\}.
\end{equation*}

\begin{prop}
	For each fixed $\Delta t > 0$, the structure subproblem has a unique solution\\ $(\be^{n+1/2},\bv^{n+1/2},\bd^{n+1/2},\bw^{n+1/2},\bk^{n+1/2},\bz^{n+1/2})\in W_S.$
\end{prop}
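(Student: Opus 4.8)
The plan is to solve the linear system \eqref{sub_struct} by the Lax--Milgram lemma after eliminating the displacement unknowns in favour of the velocities. First I would use the kinematic relations contained in \eqref{sub_struct}, namely $\be^{n+1/2}=\be^n+\dt\,\bv^{n+1/2}$, $\bd_i^{n+1/2}=\bd_i^n+\dt\,\bk_i^{n+1/2}$ and $\bw_i^{n+1/2}=\bw_i^n+\dt\,\bz_i^{n+1/2}$, to reduce the six unknowns to the velocity triple $(\bv^{n+1/2},\bk^{n+1/2},\bz^{n+1/2})$. Since $\be^n\circ\bpi=\bd^n$ holds by construction (it holds for the initial data and is propagated by the scheme) and $(\bd^n,\bw^n)$ satisfies the inextensibility constraint $\partial_s\bd^n_i+\bt_i\times\bw^n_i=0$, subtracting and dividing by $\dt$ shows that the velocity triple must obey $\bv^{n+1/2}\circ\bpi=\bk^{n+1/2}$ and $\partial_s\bk^{n+1/2}_i+\bt_i\times\bz^{n+1/2}_i=0$; conversely, exactly these two conditions guarantee that the displacements reconstructed from the relations above lie in $V_{KS}$. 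Hence the velocities are sought in a Hilbert space which, under the obvious relabelling $(\be,\bd,\bw)\mapsto(\bv,\bk,\bz)$, is nothing but $V_{KS}$.

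Substituting these relations into the first equation of \eqref{sub_struct} (after multiplying by $\dt$), I obtain a variational problem
\[
\mathcal{B}\big((\bv,\bk,\bz),(\bpsi,\bxi,\bzeta)\big)=\mathcal{F}(\bpsi,\bxi,\bzeta)\quad\text{for all }(\bpsi,\bxi,\bzeta)\in V_{KS},
\]
where $\mathcal{B}$ is the sum of the three $L^2$-inertia pairings $\rho_K h\int_\omega\bv\cdot\bpsi R$, $\rho_S\sum_i A_i\int_0^{l_i}\bk_i\cdot\bxi_i$ and $\rho_S\sum_i\int_0^{l_i}M_i\bz_i\cdot\bzeta_i$, plus $\dt^2$ times the stiffness forms $a_K(\bv,\bpsi)$ and $a_S(\bz,\bzeta)$, and where the linear functional $\mathcal{F}$ collects the data $\be^n,\bv^n,\bd^n,\bw^n,\bk^n,\bz^n$ from the previous sub-step. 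Continuity of $\mathcal{B}$ on $V_{KS}$ and of $\mathcal{F}$ are routine, using boundedness of $a_K$ on $H^2(\omega)$, of $a_S$ on $H^1(\mathcal{N})$, Cauchy--Schwarz, and the regularity of the data. For coercivity I would test $\mathcal{B}$ with $(\bv,\bk,\bz)$ and combine three lower bounds: $a_K(\bv,\bv)=\langle\mathcal{L}\bv,\bv\rangle\ge c\|\bv\|_{H^2(\omega)}^2$ by the coercivity of $\mathcal{L}$ established above; $\int_0^{l_i}M_i\bz_i\cdot\bz_i\ge c\|\bz_i\|_{L^2(0,l_i)}^2$ by positive definiteness of $M_i$; and $a_S(\bz,\bz)\ge c\|\partial_s\bz\|_{L^2(\mathcal{N})}^2$ since $Q_i\in SO(3)$ and $H_i$ is positive definite. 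The last two give $\dt$-dependent $H^1(\mathcal{N})$-coercivity for $\bz$, while $\|\bk\|_{H^1(\mathcal{N})}=\|\bv\circ\bpi\|_{H^1(\mathcal{N})}\le C\|\bv\|_{H^2(\omega)}$ by the trace theorem along the curves $\bpi_i([0,l_i])\subset\omega$; summing these recovers the full $V_{KS}$-norm, so $\mathcal{B}$ is coercive with a constant depending on $\dt$ (which is fixed here). The Lax--Milgram lemma — in fact $\mathcal{B}$ is symmetric, so the Riesz representation theorem would suffice — then produces a unique $(\bv^{n+1/2},\bk^{n+1/2},\bz^{n+1/2})\in V_{KS}$; reconstructing the displacements from the kinematic relations gives the unique solution in $W_S$, and uniqueness of the full six-tuple follows because its displacement components are determined by the velocities.

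The step I expect to be the main obstacle is the coercivity estimate, and specifically the treatment of the rotation velocity $\bz$: the kinematic coupling fixes $\bk=\bv\circ\bpi$ and, through $\partial_s\bk_i+\bt_i\times\bz_i=0$, only the component of $\bz_i$ orthogonal to $\bt_i$; the $L^2$-control of $\bz_i$ — in particular of its $\bt_i$-parallel part — must therefore come entirely from the inertial term $\int_0^{l_i}M_i\bz_i\cdot\bz_i$ and the $\dot H^1$-control from $a_S$, and it is only after these are assembled together with the trace bound for $\bk$ and the $\mathcal{L}$-coercivity for $\bv$ that the norm of the constrained space $V_{KS}$ is controlled. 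The degeneration of the coercivity constant as $\dt\to 0$ is harmless for this proposition, since $\dt$ is fixed; the $\dt$-uniform bounds are obtained separately from the discrete energy inequality.
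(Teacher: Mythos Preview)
Your proposal is correct and follows the same route as the paper: the paper's proof consists solely of the observation that the semi-discretized structure subproblem is a linear elliptic problem to which the Lax--Milgram lemma applies, with the details deferred to Proposition~6.1 of \cite{LinearFSIstent}. Your reduction to the velocity triple in $V_{KS}$, verification of continuity, and assembly of coercivity from the $\mathcal{L}$-coercivity, the positive definiteness of $M_i$ and $Q_iH_iQ_i^T$, and the trace bound $\|\bv\circ\bpi\|_{H^1(\mathcal{N})}\le C\|\bv\|_{H^2(\omega)}$ is exactly how one carries this out; the $\dt$-dependence of the coercivity constant is, as you note, immaterial here.
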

\begin{proof}
	Since the semi-discretized structure subproblem is a linear elliptic problem,  the Lax--Milgram lemma implies the existence of a unique solution.
	 Details of the proof can be found in Proposition 6.1. in \cite{LinearFSIstent}.
\end{proof}

\begin{prop}
	For each fixed $\dt>0,$ the structure subproblem \eqref{sub_struct} satisfies the following discrete energy equality:
	\begin{align}\label{energy_struct}
	\begin{split}
	E_N^{n+1/2}
	&+\rho_Kh\|\bv^{n+1/2}-\bv^n\|^2_{L^2(R;\omega)}+a_K(\be^{n+1/2}-\be^n,\be^{n+1/2}-\be^n)
	+ \rho_S\|\bk^{n+1/2}-\bk^n\|_a^2\\
	&+ \rho_S \|\bz^{n+1/2}-\bz^n\|_m^2
	+ a_S(\bw^{n+1/2}-\bw^n,\bw^{n+1/2}-\bw^n)=E_N^n,
	\end{split}
	\end{align}
	where
	$$
	\|\bk\|_a^2:=\sum_{i=1}^{n_E}A_i\|\bk_i\|_{L^2(0,l_i)}.
	$$
\end{prop}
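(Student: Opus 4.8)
The plan is to test the momentum equation in \eqref{sub_struct} with the structure velocities just computed in the same step, and to turn every backward-difference term into a telescoping difference of energies plus a non-negative numerical-dissipation remainder, via the polarization identity
$$2\,b(x,x-y)=b(x,x)-b(y,y)+b(x-y,x-y),$$
valid for any symmetric bilinear form $b$. This applies termwise to the left-hand side of \eqref{sub_struct}: for the inertial terms $b$ is the relevant weighted $L^2$-inner product (weight $R$ on $\omega$, weight $A_i$ on the $i$-th rod, and $M_i$ for the angular velocities), and for $a_K$, $a_S$ it is the elliptic form itself, which is symmetric because the elasticity tensor satisfies $\mathcal A E:F=\mathcal A F:E$, the regularizing term $\varepsilon_K\int_\omega(\Delta\eta_z\Delta\psi_z+\Delta\eta_\theta\Delta\psi_\theta)R$ is symmetric, and the matrices $M_i$ and $Q_iH_iQ_i^T$ are symmetric.

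The first step is to check that $(\bpsi,\bxi,\bzeta)=(\bv^{n+1/2},\bk^{n+1/2},\bz^{n+1/2})$ is an admissible test function, i.e. that it lies in $V_{KS}$. The kinematic identities in \eqref{sub_struct} give $\bv^{n+1/2}=(\be^{n+1/2}-\be^n)/\dt$, $\bk_i^{n+1/2}=(\bd_i^{n+1/2}-\bd_i^n)/\dt$ and $\bz_i^{n+1/2}=(\bw_i^{n+1/2}-\bw_i^n)/\dt$. Since $V_K$ and $V_S$ are vector spaces containing both $(\be^{n+1/2},\bd^{n+1/2},\bw^{n+1/2})$ and $(\be^n,\bd^n,\bw^n)$, the velocities inherit the $H^2$ and $H^1$ regularity, the clamped and periodic boundary conditions, and the inextensibility--unshearability constraint; and subtracting $\be^{n+1/2}\circ\bpi=\bd^{n+1/2}$ from $\be^n\circ\bpi=\bd^n$ yields $\bv^{n+1/2}\circ\bpi=\bk^{n+1/2}$. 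Hence $(\bv^{n+1/2},\bk^{n+1/2},\bz^{n+1/2})\in V_{KS}$.

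Using these identities once more to rewrite $a_K(\be^{n+1/2},\bv^{n+1/2})=\dt^{-1}a_K(\be^{n+1/2},\be^{n+1/2}-\be^n)$ and $a_S(\bw^{n+1/2},\bz^{n+1/2})=\dt^{-1}a_S(\bw^{n+1/2},\bw^{n+1/2}-\bw^n)$, the momentum equation in \eqref{sub_struct} (multiplied by $\dt$) becomes a sum of terms of the form $b(x,x-y)$. Applying the polarization identity to each, and collecting the $(\cdot)^{n+1/2}$ contributions into $E_N^{n+1/2}$, the $(\cdot)^n$ contributions into $E_N^n$, and the quadratic remainders into $\rho_Kh\|\bv^{n+1/2}-\bv^n\|^2_{L^2(R;\omega)}$, $a_K(\be^{n+1/2}-\be^n,\be^{n+1/2}-\be^n)$, $\rho_S\|\bk^{n+1/2}-\bk^n\|_a^2$, $\rho_S\|\bz^{n+1/2}-\bz^n\|_m^2$ and $a_S(\bw^{n+1/2}-\bw^n,\bw^{n+1/2}-\bw^n)$, reproduces exactly \eqref{energy_struct}. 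The fluid kinetic-energy term appearing in the definition of $E_N^{n+i/2}$ carries no structure variable and is frozen during the structure substep (where $\bu^{n+1/2}=\bu^n$), so it cancels from the two sides.

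I do not expect a genuine obstacle. The only care needed is the bookkeeping of the second paragraph --- showing the velocity triple is an admissible test function, which rests on the kinematic identities and on $V_K$, $V_S$ being vector spaces --- and the elementary symmetry check for $a_K$ and $a_S$ that legitimizes applying polarization termwise. The computation then parallels the corresponding step in \cite{LinearFSIstent}.
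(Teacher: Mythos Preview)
Your proposal is correct and follows exactly the standard approach the paper has in mind: test with the newly computed velocities and apply the algebraic identity $(a-b)\cdot a=\tfrac12(|a|^2+|a-b|^2-|b|^2)$ termwise, which is precisely what the paper does explicitly for the fluid subproblem and what the cited reference \cite{LinearFSIstent} does for the structure step. Your check that $(\bv^{n+1/2},\bk^{n+1/2},\bz^{n+1/2})\in V_{KS}$ via the kinematic identities and the linearity of the constraint spaces is the one nontrivial bookkeeping point, and you handle it correctly.
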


\begin{proof}
	The proof is similar to the corresponding proof in \cite{LinearFSIstent}.
\end{proof}

\subsection{The semi-discretized fluid subproblem}

In this step the structure variables remain unchanged:
$$
\bet^{n+1}=\bet^{n+1/2},\quad \bd^{n+1}=\bd^{n+1/2}, \quad \bw^{n+1}=\bw^{n+1/2},\quad \bk^{n+1}=\bk^{n+1/2},\quad \bz^{n+1} = \bz^{n+1/2}.  
$$
Recall that $\bet^{n+1}$ is the reparameterized structure displacement.

The fluid and shell velocities $(\bu^{n+1},\bv^{n+1})\in W_F$ are updated by 
solving the semi-discretized fluid subproblem in ALE formulation, defined on the just updated domain  $\Omega^{n+1}$, written in weak form:
\begin{align}\label{sub_fluid}
\begin{split}
\dis &\rho_F \int_{\Omega^{n+1}}  \frac{\bu^{n+1}-\buh^{n}}{\dt}\cdot \bups + \dis \frac{\rho_F}{2}\int_{\Omega^{n+1}}(\nabla\cdot \bs)(\buh^n\cdot \bups)\\
&+\frac{\rho_F}{2}\int_{\Omega^{n+1}}\left[((\buh^n-\bs)\cdot\nabla)\bu^{n+1}\cdot \bups - ((\buh^n-\bs)\cdot\nabla)\bups\cdot \bu^{n+1}\right]\\
&+2\mu_F \int_{\Omega^{n+1}}\bD(\bu^{n+1}):\bD(\bups) 
+\dis \rho_K h\int_{\omega}\frac{\bv^{n+1}-\bv^{n+1/2}}{\dt}\cdot\bpsi R \\  
&= \dis P_{in}^n\int_{\Gamma_{in}}\upsilon_z-  P_{out}^n\int_{\Gamma_{out}}\upsilon_z,\quad \forall (\bups,\bpsi)\in W_F,
\end{split}
\end{align}
where the weak solution space is defined by:
$$W_F=\{ (\bu,\bv)\in V_F^{n+1}\times L^2(R;\omega): (\bu\circ \bphi^{n+1})|_{\Gamma}\circ \bvarphi = \bv \text{ on }\omega\},$$
with
$$V_F^{n+1}=\{\bu\in H^1(\Omega^{n+1}): \nabla\cdot\bu=0,\bu\times\mathbf{e}_z=0\text{ on }\Gamma_{in/out}\}.$$
The pressure terms are given by  $P_{in/out}^n=\dis\frac{1}{\dt}\int_{n\dt}^{(n+1)\dt}P_{in/out}(t)\;dt.$

\begin{prop} For each fixed $\dt>0,$ the fluid subproblem \eqref{sub_fluid} has a unique solution $(\bu^{n+1},\bv^{n+1})\in W_F.$
\end{prop}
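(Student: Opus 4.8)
The plan is to recast the weak formulation \eqref{sub_fluid} as a single variational equation on the Hilbert space $W_F$ and apply the Lax--Milgram lemma. First I would split \eqref{sub_fluid} into a bilinear form and a linear functional: collecting all terms that carry the unknowns $\bu^{n+1},\bv^{n+1}$, one obtains a bilinear form $a\big((\bu,\bv),(\bups,\bpsi)\big)$ consisting of the discrete inertial term $\tfrac{\rho_F}{\dt}\int_{\Omega^{n+1}}\bu\cdot\bups$, the skew-symmetrized convective term $\tfrac{\rho_F}{2}\int_{\Omega^{n+1}}\big[((\buh^n-\bs)\cdot\nabla)\bu\cdot\bups-((\buh^n-\bs)\cdot\nabla)\bups\cdot\bu\big]$, the viscous term $2\mu_F\int_{\Omega^{n+1}}\bD(\bu):\bD(\bups)$, and the shell-inertia term $\tfrac{\rho_K h}{\dt}\int_\omega\bv\cdot\bpsi R$; everything else, built from the known data $\buh^n$, $\nabla\cdot\bs$, $\bv^{n+1/2}$ and $P_{in/out}^n$, is collected into a bounded linear functional $\langle L,(\bups,\bpsi)\rangle$. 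One should note that $W_F$, cut out of $H^1(\Omega^{n+1})\times L^2(R;\omega)$ by the divergence-free constraint, the condition $\bu\times\mathbf e_z=0$ on $\Gamma_{in/out}$, and the trace match $(\bu\circ\bphi^{n+1})|_\Gamma\circ\bvarphi=\bv$, is a closed subspace (each constraint is preserved under $H^1$-convergence via the trace theorem), hence itself a Hilbert space. At this stage $\bet^{n+1}=\bet^{n+1/2}$ is already fixed, so that under the uniform Lipschitz property (Assumption~\ref{LipschitzAssumption}) the domain $\Omega^{n+1}$ is a bounded Lipschitz domain, the ALE map $\bA$ is bi-Lipschitz, $\buh^n=\bu^n\circ\bA\in H^1(\Omega^{n+1})$, and $\bs\in L^\infty(\Omega^{n+1})$.

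Continuity of $a$ on $W_F$ is routine for the inertial, viscous and shell-inertia terms; for the convective trilinear term I would use the three-dimensional embedding $H^1(\Omega^{n+1})\hookrightarrow L^6(\Omega^{n+1})$ together with $\buh^n-\bs\in L^3(\Omega^{n+1})$ to bound $\big|\int_{\Omega^{n+1}}((\buh^n-\bs)\cdot\nabla)\bu\cdot\bups\big|$ by $C\|\buh^n-\bs\|_{L^3}\|\nabla\bu\|_{L^2}\|\bups\|_{L^6}$. Boundedness of $L$ then follows from Cauchy--Schwarz, the Sobolev embedding (for the term involving $\nabla\cdot\bs$, noting $\nabla\cdot\bs\in L^p(\Omega^{n+1})$ for every $p<\infty$), and the trace theorem on $\Gamma_{in/out}$ for the pressure forcing.

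The crux is coercivity. Testing with $(\bups,\bpsi)=(\bu,\bv)$, the skew-symmetric convective part cancels identically, leaving
\[
a\big((\bu,\bv),(\bu,\bv)\big)=\frac{\rho_F}{\dt}\|\bu\|_{L^2(\Omega^{n+1})}^2+2\mu_F\|\bD(\bu)\|_{L^2(\Omega^{n+1})}^2+\frac{\rho_K h}{\dt}\|\bv\|_{L^2(R;\omega)}^2 .
\]
Because the backward Euler discretization already supplies the full $\|\bu\|_{L^2(\Omega^{n+1})}^2$ term, there is no need here for a Poincar\'e- or trace-based Korn inequality; instead Korn's second inequality (Korn's inequality without boundary conditions) on the bounded Lipschitz domain $\Omega^{n+1}$, $\|\bu\|_{H^1(\Omega^{n+1})}^2\le C_K\big(\|\bu\|_{L^2(\Omega^{n+1})}^2+\|\bD(\bu)\|_{L^2(\Omega^{n+1})}^2\big)$ (see, e.g., \cite{CiarletVol1}), upgrades the identity above to $a\big((\bu,\bv),(\bu,\bv)\big)\ge c\big(\|\bu\|_{H^1(\Omega^{n+1})}^2+\|\bv\|_{L^2(R;\omega)}^2\big)$, i.e.\ coercivity of $a$ on $W_F$. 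The Lax--Milgram lemma then yields the unique $(\bu^{n+1},\bv^{n+1})\in W_F$ solving \eqref{sub_fluid}. I expect the only genuinely delicate point to be this coercivity step, and in particular the reliance on $\Omega^{n+1}$ being a Lipschitz domain so that Korn's second inequality applies; for the present statement ($\dt$ and $n$ fixed) a merely qualitative Lipschitz bound suffices, while the \emph{uniform} Lipschitz control provided by Assumption~\ref{LipschitzAssumption} is what will matter later when passing to the limit $\dt\to0$. The remaining details parallel the corresponding arguments in \cite{LinearFSIstent} and \cite{BorSun3d}.
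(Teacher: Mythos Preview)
Your proposal is correct and follows essentially the same route as the paper: rewrite \eqref{sub_fluid} as a variational problem on $W_F$, observe that the skew-symmetrized convective term vanishes when tested against the solution so that coercivity reduces to $\|\bu\|_{L^2}^2+\|\bD(\bu)\|_{L^2}^2+\|\bv\|_{L^2(R;\omega)}^2$ controlling the $W_F$-norm via Korn, check continuity via $H^1\hookrightarrow L^6$ and H\"older, and apply Lax--Milgram. Your explicit remarks on $W_F$ being a closed subspace and on the role of the Lipschitz regularity of $\Omega^{n+1}$ for Korn's second inequality are a bit more detailed than the paper's presentation, but the argument is the same.
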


\begin{proof}
	We rewrite the first equation in \eqref{sub_fluid} as follows:
	\begin{align*}
	&\dis\frac{\rho_F}{\dt}\int_{\Omega^{n+1}}\bu^{n+1}\cdot\bups 
	+\frac{\rho_F}{2}\int_{\Omega^{n+1}}\big[((\buh^n-\bs)\cdot\nabla)\bu^{n+1}\cdot \bups
	- ((\buh^n-\bs)\cdot\nabla)\bups\cdot \bu^{n+1} \big]\\
	& + 2\mu_F \int_{\Omega^{n+1}}\bD(\bu^{n+1}):\bD(\bups) + \frac{\rho_K h}{\dt}\int_{\omega}\bv^{n+1}\cdot\bpsi R\\
	&=\dis\frac{\rho_F}{\dt}\int_{\Omega^{n+1}}\buh^{n}\cdot\bups  +
	\frac{\rho_F}{2}(\nabla\cdot\bs)(\buh^n\cdot\bups)+
	\frac{\rho_K h}{\dt}\int_{\omega}\bv^{n+1/2}\cdot\bpsi R
	+ P_{in}^n\int_{\Gamma_{in}} \upsilon_z -P_{out}^n \int_{\Gamma_{out}}\upsilon_z,
	\end{align*}
	and define the bilinear form associated with problem \eqref{sub_fluid}:
	\begin{align*}
	a((\bu,\bv),(\bups,\bpsi))&= \rho_F\int_{\Omega^{n+1}}\bu\cdot\bups +\frac{\rho_F\dt}{2}\int_{\Omega^{n+1}}\big[((\buh^n-\bs)\cdot\nabla)\bu\cdot \bups\\
	&\quad - ((\buh^n-\bs)\cdot\nabla)\bups\cdot \bu \big]
	+ 2\dt \mu_F\int_{\Omega^{n+1}}\bD(\bu):\bD(\bups)
	+ \rho_Kh \int_{\omega}\bv\cdot\bpsi R.
	\end{align*}
	We need to prove that this bilinear form $a$ is coercive and continuous on $W_F.$ To see that $a$ is coercive, we write
	\begin{align*}
	a((\bu,\bv),(\bu,\bv))&=\rho_F\int_{\Omega^{n+1}}|\bu|^2 + 2\dt\mu_F \int_{\Omega^{n+1}}|\bD(\bu)|^2 + \rho_Kh \int_{\omega}|\bv|^2 R\\ 
	&\geq c \left( \|\bu\|_{L^2(\Omega^{n+1})}^2 + \|\bD(\bu)\|_{L^2(\Omega^{n+1})}^2 + \|\bv\|_{L^2(R;\omega)}^2 \right)\\
	&\geq c \left( \|\bu\|_{H^1(\Omega^{n+1})}^2 + \|\bv\|_{L^2(R;\omega)}^2 \right).
	\end{align*}
	To prove continuity, we apply the generalized H\"older's inequality to obtain:
	\begin{align*}
	&a((\bu,\bv),(\bups,\bpsi))\leq \rho_F\|\bu\|_{L^2(\Omega^{n+1})}\|\bups\|_{L^2(\Omega^{n+1})} + 2\dt\mu_F \|\bD(\bu)\|_{L^2(\Omega^{n+1})}\|\bD(\bups)\|_{L^2(\Omega^{n+1})}\\
	&\quad + \rho_K h \|\bv\|_{L^2(R;\omega)}\|\bpsi\|_{L^2(R;\omega)}+
	\rho_F\dt\|\nabla\bu\|_{L^2(\Omega^{n+1})}\|\bu\|_{L^4(\Omega^{n+1})}\|\bups\|_{L^4(\Omega^{n+1})}.
	\end{align*}
	Using the continuous embedding of $H^1$ into $L^6$, and  the continuous embedding of $L^p$ into $L^q$, for $q<p,$ we obtain:
	\begin{align*}
	a((\bu,\bv),(\bups,\bpsi))&\leq C\left(\rho_F \|\bu\|_{H^1(\Omega^{n+1})}\|\bups\|_{H^1(\Omega^{n+1})} + 2\dt\mu_F \|\bu\|_{H^1(\Omega^{n+1})}\|\bups\|_{H^1(\Omega^{n+1})}\right.\\
	&\quad+\left. \rho_K h \|\bv\|_{L^2(R;\omega)}\|\bpsi\|_{L^2(R;\omega)}+
	\rho_F\dt\|\bu\|_{H^1(\Omega^{n+1})}\|\bu\|_{H^1(\Omega^{n+1})}\|\bups\|_{H^1(\Omega^{n+1})}\right).
	\end{align*}
	This shows that $a$ is continuous. The Lax-Milgram lemma now implies the existence of a unique solution $(\bu^{n+1},\bv^{n+1})$ of problem \eqref{sub_fluid}.	
\end{proof}

\begin{prop}
	For each fixed $\dt>0,$ the solution of problem \eqref{sub_fluid} satisfies the following discrete energy inequality:
	\begin{align}\label{energy_fluid}
	\begin{split}
	&E_N^{n+1}+\rho_F\|\bu^{n+1}-(1-\Delta\eta^{n+1,n})\buh^n\|_{L^2(\Omega^{n+1})}^2+\rho_K h \|\bv^{n+1}-\bv^{n}\|_{L^2(R;\omega)}^2 \\
	&+ D_N^{n+1} \leq E_N^{n+1/2} + C\dt ((P_{in}^n)^2 + (P_{out}^n)^2),
	\end{split}
	\end{align}
	where $\Delta\eta^{n+1,n}$ is a constant that will be specified in the proof.
\end{prop}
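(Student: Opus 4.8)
The plan is to test the fluid subproblem \eqref{sub_fluid} with the solution itself, $(\bups,\bpsi)=(\bu^{n+1},\bv^{n+1})\in W_F$, and then multiply the resulting identity by $\dt$. Two terms are disposed of immediately: the skew-symmetric trilinear term vanishes pointwise when $\bups=\bu^{n+1}$, since its two summands become identical, and the viscous term reproduces $D_N^{n+1}=2\dt\mu_F\int_{\Omega^{n+1}}|\bD(\bu^{n+1})|^2$. Everything else is organized by the polarization identity $2\,\mathbf{a}\cdot(\mathbf{a}-\mathbf{b})=|\mathbf{a}|^2-|\mathbf{b}|^2+|\mathbf{a}-\mathbf{b}|^2$ (used under the integral sign). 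Applied in the weighted space $L^2(R;\omega)$ to the shell inertia term $\rho_K h\int_\omega(\bv^{n+1}-\bv^{n+1/2})\cdot\bv^{n+1}R$, it produces the shell kinetic terms of $E_N^{n+1}$ and $E_N^{n+1/2}$ together with the numerical term $\rho_K h\|\bv^{n+1}-\bv^{n+1/2}\|_{L^2(R;\omega)}^2$; and since in the fluid step the displacements $\be,\bd,\bw$ and the mesh velocities $\bk,\bz$ are frozen, the remaining (elastic and mesh) parts of $E_N^{n+1/2}$ and $E_N^{n+1}$ coincide. Thus only the fluid kinetic energy remains to be handled.

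The heart of the argument is the pair of fluid inertial terms, which after multiplying by $\dt$ read
\[
\rho_F\int_{\Omega^{n+1}}(\bu^{n+1}-\buh^n)\cdot\bu^{n+1}+\frac{\dt\,\rho_F}{2}\int_{\Omega^{n+1}}(\nabla\cdot\bs)\,\buh^n\cdot\bu^{n+1}.
\]
First I would compute $\nabla\cdot\bs$ from the explicit formula \eqref{sn}: writing $\bs$ in Cartesian coordinates as a coefficient independent of the radial variable times $(0,x,y)$, one finds $\nabla\cdot\bs=\frac{2(\et^{n+1}-\et^{n})}{\dt\,(R+\et^{n+1})}$, which pins down the quantity $\Delta\eta^{n+1,n}:=\frac{\dt}{2}\,\nabla\cdot\bs=\frac{\et^{n+1}-\et^{n}}{R+\et^{n+1}}$ appearing in \eqref{energy_fluid}. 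The second integral above is exactly the discrete geometric-conservation-law correction built into the scheme, so that grouping it with the difference quotient collapses the sum into $\rho_F\int_{\Omega^{n+1}}\big(\bu^{n+1}-(1-\Delta\eta^{n+1,n})\buh^n\big)\cdot\bu^{n+1}$. Applying the polarization identity with $\mathbf{a}=\bu^{n+1}$ and $\mathbf{b}=(1-\Delta\eta^{n+1,n})\buh^n$ then produces the fluid kinetic term of $E_N^{n+1}$, the numerical-dissipation term $\rho_F\|\bu^{n+1}-(1-\Delta\eta^{n+1,n})\buh^n\|_{L^2(\Omega^{n+1})}^2$ of \eqref{energy_fluid}, and a leftover $\rho_F\int_{\Omega^{n+1}}(1-\Delta\eta^{n+1,n})^2|\buh^n|^2$.

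I would then identify this leftover with the fluid kinetic energy carried over from the structure step, via the change of variables $y=\bA(x)$ for the ALE map $\bA\colon\Omega^{n+1}\to\Omega^n$: its Jacobian is $\bS=\left|\frac{R+\et^{n}}{R+\et^{n+1}}\right|^2=(1-\Delta\eta^{n+1,n})^2$, so the weight $(1-\Delta\eta^{n+1,n})^2$ cancels the Jacobian of $\bA^{-1}$ and $\rho_F\int_{\Omega^{n+1}}(1-\Delta\eta^{n+1,n})^2|\buh^n|^2=\rho_F\int_{\Omega^n}|\bu^n|^2$, which is the fluid contribution to $E_N^{n+1/2}$. Finally, the inlet/outlet terms $\dt\,P_{in}^n\int_{\Gamma_{in}}u_z^{n+1}-\dt\,P_{out}^n\int_{\Gamma_{out}}u_z^{n+1}$ are bounded by the trace theorem, Korn's inequality --- valid because $\bu^{n+1}=0$ on $\Gamma_{in/out}$, exactly as in the derivation of the continuous energy inequality \eqref{energy} --- and Young's inequality with a parameter $\varepsilon$, giving $C\dt\big((P_{in}^n)^2+(P_{out}^n)^2\big)+\varepsilon\dt\|\bD(\bu^{n+1})\|_{L^2(\Omega^{n+1})}^2$; choosing $\varepsilon$ small enough absorbs the last term into $D_N^{n+1}$, and assembling all the identities yields \eqref{energy_fluid}.

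The step I expect to be the main obstacle is the one spanning the second and third paragraphs: verifying the discrete geometric conservation law, namely that the difference quotient and the $(\nabla\cdot\bs)$-term combine into a single square $\|\bu^{n+1}-(1-\Delta\eta^{n+1,n})\buh^n\|^2$, and simultaneously that the weight $(1-\Delta\eta^{n+1,n})^2$ is \emph{precisely} the ALE Jacobian $\bS$, so that no domain-motion remainder survives the change of variables. Both facts hinge on the explicit cylindrical form \eqref{sn} of the discrete ALE velocity $\bs$ (and on the reparameterization making $\et$ the only displacement unknown); the remainder is routine bookkeeping with the structure energy equality \eqref{energy_struct} and standard trace and Korn estimates.
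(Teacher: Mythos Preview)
Your proposal is correct and follows essentially the same route as the paper: test \eqref{sub_fluid} with $(\bu^{n+1},\bv^{n+1})$, use the polarization identity, compute $\nabla\cdot\bs=2\Delta\eta^{n+1,n}/\dt$, and verify the key algebraic identity $\bS=(1-\Delta\eta^{n+1,n})^2$ so that the weighted $|\buh^n|^2$ term becomes exactly $\int_{\Omega^n}|\bu^n|^2$, with the pressure terms absorbed via trace, Korn, and Young. The only cosmetic difference is the order of operations---the paper applies the polarization identity first and then adds and subtracts $\int_{\Omega^n}|\bu^n|^2$, whereas you first combine the inertial and $(\nabla\cdot\bs)$ terms and then polarize---and the paper multiplies by $2\dt$ rather than $\dt$ to match the normalization of $E_N$ in \eqref{disc_kinetic}; neither affects the argument.
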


\begin{proof}
	We begin by replacing the test function $(\bups,\bpsi)$ by $(\bu^{n+1},\bv^{n+1})$ in the weak formulation \eqref{sub_fluid} to obtain:
	\begin{align*}
	&\dis\frac{\rho_F}{\dt}\int_{\Omega^{n+1}}(\bu^{n+1}-\buh^n)\cdot\bu^{n+1} + \frac{\rho_F}{2}\int_{\Omega^{n+1}}(\nabla\cdot \bs)(\buh^n \cdot \bu^{n+1})
	+ 2\mu_F \int_{\Omega^{n+1}}\bD(\bu^{n+1}):\bD(\bu^{n+1})\\
	&+\dis\frac{\rho_Kh}{\dt}\int_{\omega}(\bv^{n+1}-\bv^{n+1/2})\cdot\bv^{n+1}R
	=P_{in}^n\int_{\Gamma_{in}} u_z^{n+1}-P_{out}^n\int_{\Gamma_{out}} u_z^{n+1}.
	\end{align*}
	After applying algebraic identity $(a-b)\cdot a=\frac{1}{2}(|a|^2+|a-b|^2-|b|^2)$ to take care of the terms involving mixed products,
	 and multiplying the resulting equation by $2\dt,$ we get:
	\begin{align*}
	&\rho_F \int_{\Omega^{n+1}}|\bu^{n+1}|^2 + \rho_F \int_{\Omega^{n+1}} |\bu^{n+1} - \buh^n|^2 - \rho_F\int_{\Omega^{n+1}}|\buh^n|^2
	+\rho_F \dt \int_{\Omega^{n+1}} (\nabla\cdot \bs)(\buh^n\cdot\bu^{n+1})\\
	&+ 4\dt \mu_F \int_{\Omega^{n+1}}\bD(\bu^{n+1}):\bD(\bu^{n+1})
	+\rho_K h \int_{\omega}|\bv^{n+1}|^2 R + \rho_K h\int_{\omega}|\bv^{n+1}-\bv^n|^2 R \\
	&-\rho_K h\int_{\omega}|\bv^{n}|^2 R
	=P_{in}^n\int_{\Gamma_{in}} u_z^{n+1}-P_{out}^n\int_{\Gamma_{out}}u_z^{n+1}.
	\end{align*}
	To get the desired energy inequality we add and subtract the term $\rho_F\int_{\Omega^n}|\bu^n|^2$ on the left-hand side of the equality,
	and convert one of those terms into an integral over the current domain $\Omega^{n+1}$ by recalling the ALE Jacobian
	$S^{n+1,n}=|\det\nabla A^{n+1,n}|$ to obtain:
	\begin{equation*}
	\rho_F\int_{\Omega^n}|\bu^n|^2 = \rho_F\int_{\Omega^{n+1}}S^{n+1,n}|\buh^n|^2.
	\end{equation*} 
	Furthermore, by using the formula for divergence in cylindrical coordinates 
	$\dis\nabla\cdot f = \frac{\partial f_z}{\partial z}+\frac{1}{r}\frac{\partial(rf_r)}{\partial_r}+\frac{1}{r}\frac{\partial f_\theta}{\partial \theta},$ 
	we calculate:
	\begin{equation}
	\nabla\cdot \bs = 2 \frac{\et^{n+1}(\zt,\tht)-\et^{n}(\zt,\tht)}{\dt(R+\et^{n+1}(\zt,\tht))} 
	= 2 \frac{\Delta \eta^{n+1,n}}{\Delta t},
	\end{equation}
	where we used 
	$\Delta\eta^{n+1,n}$ to denote the difference between the current and previous location of the lateral boundary,
	normalized by the current lateral boundary location:
	$\dis \Delta\eta^{n+1,n}:=\frac{\et^{n+1}(\zt,\tht)-\et^{n}(\zt,\tht)} {R+\et^{n+1}(\zt,\tht)}.$
	
	By combining these manipulations, the fluid kinetic energy and numerical dissipation terms in the energy equality can be rewritten in the following way:
	\begin{align*}
	&\int_{\Omega^{n+1}}|\bu^{n+1}|^2 + \int_{\Omega^{n+1}} |\bu^{n+1} - \buh^{n}|^2 - \int_{\Omega^{n+1}}|\buh^n|^2
	+ \int_{\Omega^{n+1}}\dt (\nabla\cdot \bs)(\buh^n\cdot\bu^{n+1})
	\pm \int_{\Omega^n}|\bu^n|^2\\
	&= \int_{\Omega^{n+1}}|\bu^{n+1}|^2 + \int_{\Omega^{n+1}}|\bu^{n+1}|^2 - \int_{\Omega^{n+1}}2\bu^{n+1}\cdot \buh^{n} + \int_{\Omega^{n+1}}|\buh^n|^2 
	- \int_{\Omega^{n+1}}|\buh^n|^2 \\
	&\quad + \int_{\Omega^{n+1}}2\Delta\eta^{n+1,n} \buh^n\cdot\bu^{n+1} + \int_{\Omega^{n+1}}S^{n+1,n}|\buh^n|^2 - \int_{\Omega^n}|\bu^n|^2\\
	&= \int_{\Omega^{n+1}}|\bu^{n+1}|^2 - \int_{\Omega^n}|\bu^n|^2 + \int_{\Omega^{n+1}}|\bu^{n+1}|^2 
	- \int_{\Omega^{n+1}}2(1-\Delta\eta^{n+1,n})\bu^{n+1}\cdot \buh^n+\int_{\Omega^{n+1}}S^{n+1,n}|\buh^n|^2\\
	&= \int_{\Omega^{n+1}}|\bu^{n+1}|^2 - \int_{\Omega^n}|\bu^n|^2 + \int_{\Omega^{n+1}}|\bu^{n+1}-(1-\Delta\eta^{n+1,n})\buh^n|^2 
	- \int_{\Omega^{n+1}}|(1- \Delta\eta^{n+1,n})\buh^{n}|^2\\
	&\quad +\int_{\Omega^{n+1}}S^{n+1,n}|\buh^n|^2.
	\end{align*}
	The first two terms on the right-hand side correspond to the discrete kinetic energy at time $t_{n+1}$ and $t_n$, respectively, while the third term corresponds to numerical dissipation. By a simple calculation we obtain that $S^{n+1,n}-(1-\Delta\eta^{n+1,n})^2=0$ so the last two terms cancel out. 
	After estimating the pressure terms, we obtain the following energy inequality:
	\begin{align*}
	&\rho_F\left(\|\bu^{n+1}\|_{L^2(\Omega^{n+1})}^2 + \|\bu^{n+1}-(1-\Delta\eta^{n+1,n})\buh^n\|_{L^2(\Omega^{n+1})}^2\right) + 2\dt\mu_F \|\bD(\bu^{n+1})\|_{L^2(\Omega^{n+1})}^2 \\
	&\quad + \rho_K h \|\bv^{n+1}\|_{L^2(R;\omega)}^2  + \rho_K h \|\bv^{n+1}-\bv^{n+1/2}\|_{L^2(R;\omega)}^2\\
	&\leq \rho_F\|\bu^n\|_{L^2(\Omega^n)} + \rho_K h \|\bv^{n+1/2}\|_{L^2(R;\omega)}^2 + C\dt\left((P_{in}^n)^2+(P_{out}^n)^2\right).
	\end{align*}
	
	Therefore, with the help of nonlinear advection and by adding and subtracting the  term $\rho_F\int_{\Omega^n}|\bu^n|^2$
	we were able to show that the fluid kinetic energy and the shell kinetic energy are both decreasing in time, 
	and satisfy an energy estimate, which additionally provides uniform boundedness of the viscous fluid dissipation, and numerical dissipation 
	captured by the terms $ \rho_F\|\bu^{n+1}-(1-\Delta\eta^{n+1,n})\buh^n\|_{L^2(\Omega^{n+1})}^2$ and 
	$\rho_K h \|\bv^{n+1}-\bv^{n+1/2}\|^2$.
		
	Finally, recall that $\be^{n+1}=\be^{n+1/2}$ and $\bw^{n+1}=\bw^{n+1/2}$ in the fluid subproblem, so we can add $a_K(\be^{n+1},\be^{n+1})$ and $a_S(\bw^{n+1},\bw^{n+1})$  on the left-hand side, and 
	$a_K(\be^{n+1/2},\be^{n+1/2})$ and $a_S(\bw^{n+1/2},\\ \bw^{n+1/2})$ on the right-hand side of the previous inequality. Furthermore, since $\bk^{n+1}=\bk^{n+1/2}$ and $\bz^{n+1}=\bz^{n+1/2}$, we add $\|\bk^{n+1}\|_a^2$ and $\|\bz^{n+1}\|_m^2$ on the left-hand side,
	and $\|\bk^{n+1/2}\|_a^2$ and $\|\bz^{n+1/2}\|_m^2$ on the right-hand side, to obtain exactly the energy inequality \eqref{energy_fluid}. 
	
\end{proof}

\section{Uniform energy estimates}
Based on the energy inequalities satisfied by the fluid and structure subproblems derived above, here we show that sequences of approximations, defined by the 
time discretization via Lie operator splitting and parameterized by $N$ (which depends on $\dt$), are uniformly bounded with respect to $\dt$
in energy norm. This is the first step in our program to show that subsequences of those approximate solutions converge to a weak solution of
the coupled problem \eqref{FSIeq1}-\eqref{FSIeqIC}. The uniform energy estimates below hold under the geometric assumptions that
the lateral boundary of the fluid domain is a subgraph of a function, and that the ALE mapping,
defined in Sec.~\ref{sec:ale}, is injective. More precisely, we have the following result.

\begin{theorem}\label{thm:unif}{\bf{(Uniform Energy Estimates)}}
	Let $\dt>0$ and $N=T/\dt.$ Furthermore, let $E_N^{n+1/2},E_N^{n+1}$ and $D_N^{n+1}$ be the kinetic energy and dissipation given by \eqref{disc_kinetic} and \eqref{disc_dissip}, respectively. 
	Suppose that (geometric assumptions):
	\begin{enumerate}
	\item There exists a time $T>0$ such that for every $t\leq T$ the lateral boundary $\Gamma^{\be}(t)$ remains a subgraph of a function;
	\item The function $\bg$ defined in \eqref{inject} is injective.
	\end{enumerate}
Then there exists a constant $K>0$ independent of $\dt$ (and $N$) such that the following holds:
	\begin{enumerate}[leftmargin=*]
		\item {{The Kinetic Energy Estimate:}} $E_N^{n+1/2}\leq K, E_N^{n+1}\leq K,\forall n = 0,1,\dots,N-1$;
		\item {The Fluid Viscous Dissipation Estimate:} $\sum_{n=0}^{N-1}D_N^{n+1}\leq K$;
		\item {The Numerical Dissipation Estimates:} 
		\begin{enumerate}
		        \item $\sum_{n=0}^{N-1}\left(\rho_F \|\bu^{n+1}-(1-\Delta\eta^{n+1,n})\buh^n\|^2 + \rho_K h(\|\bv^{n+1}-\bv^{n+1/2}\|^2+\|\bv^{n+1/2}-\bv^{n}\|^2) \right)\leq~K$;
		        \item $\sum_{n=0}^{N-1}\rho_S\left(\|\bk^{n+1}-\bk^{n}\|_a^2 + \|\bz^{n+1}-\bz^{n}\|_m^2\right)\leq K$;
		\end{enumerate}
		\item {The Elastic Energy Estimates:} 
		\begin{enumerate}
		         \item $\sum_{n=0}^{N-1}a_K(\be^{n+1}-\be^n,\be^{n+1}-\be^{n})\leq K$; 
		        \item $\sum_{n=0}^{N-1}a_S(\bw^{n+1}-\bw^n,\bw^{n+1}-\bw^n)\leq K$.	
		\end{enumerate}
	\end{enumerate}
\end{theorem}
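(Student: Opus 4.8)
The plan is to combine the two discrete energy relations established above, telescope over the time steps, and control the pressure work by a discrete Gronwall argument.

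First I would add the structure energy equality \eqref{energy_struct} to the fluid energy inequality \eqref{energy_fluid}. The half-step energy $E_N^{n+1/2}$ appears on the left of the former and on the right of the latter, so it cancels and one is left with a single one-step inequality of the form
\[
E_N^{n+1} + D_N^{n+1} + \mathcal{D}_N^{n+1} \;\le\; E_N^{n} + C_n\,\dt\big((P_{in}^n)^2 + (P_{out}^n)^2\big),
\]
where $\mathcal{D}_N^{n+1}\ge 0$ collects all the numerical dissipation terms of items 3(a)--(b) together with the elastic energy increments $a_K(\be^{n+1}-\be^n,\be^{n+1}-\be^n)$ and $a_S(\bw^{n+1}-\bw^n,\bw^{n+1}-\bw^n)$; here one uses $\be^{n+1}=\be^{n+1/2}$ and $\bw^{n+1}=\bw^{n+1/2}$ from the fluid step, and non-negativity follows from coercivity of $a_K$, $a_S$ and from positivity of the quadratic forms $\|\cdot\|_a$, $\|\cdot\|_m$. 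Summing from $n=0$ to $n=m-1$ for an arbitrary $m\le N$ telescopes the kinetic energies,
\[
E_N^{m} + \sum_{n=0}^{m-1}\big(D_N^{n+1}+\mathcal{D}_N^{n+1}\big) \;\le\; E_N^{0} + \sum_{n=0}^{m-1} C_n\,\dt\big((P_{in}^n)^2 + (P_{out}^n)^2\big),
\]
with $E_N^{0}$ bounded solely in terms of the finite-energy initial data \eqref{FSIeqIC}.

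Next I would bound the pressure work. Since $P_{in/out}^n$ is the mean of $P_{in/out}$ over $(n\dt,(n+1)\dt)$, Jensen's inequality gives $\dt\,(P_{in/out}^n)^2\le \int_{n\dt}^{(n+1)\dt}|P_{in/out}(t)|^2\,dt$, hence $\dt\sum_{n}(P_{in/out}^n)^2\le \|P_{in/out}\|_{L^2(0,T)}^2$. If the constants $C_n$ were a single fixed $C$, the two displays would immediately yield all of items 1--4 with $K$ depending only on $\|P_{in/out}\|_{L^2(0,T)}$ and $E_N^{0}$. The point is that $C_n$ is the constant from \eqref{energy_fluid}, which arises from a trace inequality on the fixed inlet/outlet disks composed with Korn's inequality on $\Omega^{n+1}$, and hence depends on the geometry of $\Omega^{n+1}$. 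Under the two geometric assumptions, $\Omega^{n+1}$ is a non-degenerate subgraph domain governed by $\et^{n+1}=\et^{n+1/2}$; coercivity of $a_K$, together with $a_K(\be^{n+1/2},\be^{n+1/2})\le E_N^{n+1/2}\le E_N^{n}$ and the Sobolev embedding $H^2(\omega)\hookrightarrow L^\infty(\omega)$, bounds $\|\et^{n+1}\|_{L^\infty}$ (and the relevant H\"older seminorm), and therefore $C_n$, by an increasing function $\Phi$ of $E_N^{n}$. Feeding this back into the one-step inequality $E_N^{n+1}\le E_N^{n}+\Phi(E_N^{n})\,\dt\big((P_{in}^n)^2+(P_{out}^n)^2\big)$ and invoking the discrete Gronwall inequality yields a $\dt$-independent bound $E_N^{n}\le K$, with $K$ depending on $T$, the pressure data and the initial data; this is item 1, and $E_N^{n+1/2}\le E_N^{n}\le K$ follows at once. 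With the kinetic energy uniformly bounded, returning to the telescoped inequality and discarding $E_N^{m}\ge 0$ produces items 2, 3(a)--(b) and 4(a)--(b) simultaneously.

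The main obstacle is exactly this last point: obtaining a trace--Korn constant that is uniform over the whole family of moving domains $\{\Omega^{n+1}\}_n$. This is where Assumption~\ref{subgraph} (the interface stays a non-degenerate subgraph, so $\Omega^{n+1}$ never loses the subgraph property and $\et^{n+1}$ is well defined) and the injectivity of the map $\bg$ from \eqref{inject} (validity of the reparameterization) enter, and where the apparent circularity -- the energy bound needs a bound on $C_n$, which in turn needs a bound on the energy -- is broken by the bootstrap/Gronwall structure, at the price of $K$ depending on $T$, as anticipated in the Remark on the domain-dependence of the trace constant. If a uniform Lipschitz character of the $\Omega^{n+1}$ is actually required at this step rather than mere H\"older regularity, this is precisely the role of the additional regularity assumption on the approximate displacements introduced later, which also guarantees the injectivity of $\bg$.
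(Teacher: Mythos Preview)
Your proof is correct and follows the same skeleton as the paper: add \eqref{energy_struct} and \eqref{energy_fluid}, telescope over $n$, and bound the pressure source by $\|P_{in/out}\|_{L^2(0,T)}^2$ via Jensen (the paper uses H\"older, which is the same thing here). The paper's proof is in fact shorter than yours: it simply writes $K=E_0+C\|P_{in/out}\|_{L^2(0,T)}^2$ with a single constant $C$, without addressing in the discrete proof the domain dependence of the trace/Korn constant on $\Omega^{n+1}$; that issue is acknowledged only in the earlier Remark on the continuous energy estimate, where Gronwall is mentioned as the remedy. Your bootstrap, bounding $C_n$ by $\Phi(E_N^{n})$ through $a_K(\be^{n+1/2},\be^{n+1/2})\le E_N^{n+1/2}\le E_N^{n}$ and then applying discrete Gronwall, makes this honest and is the right refinement. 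Your closing caveat is also accurate: $H^2(\omega)$ in two space dimensions gives $C^{0,\alpha}$ but not Lipschitz control of $\et^{n+1}$, so if a uniform Korn constant genuinely needs a uniform Lipschitz bound on the approximate domains, the argument ultimately leans on Assumption~\ref{LipschitzAssumption}, exactly as you suspected.
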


\begin{proof}
	We begin by adding the energy estimates \eqref{energy_struct} and \eqref{energy_fluid} to obtain:
	\begin{align*}
	&E_N^{n+1/2}+\rho_Kh\|\bv^{n+1/2}-\bv^n\|^2+a_K(\be^{n+1/2}-\be^n,\be^{n+1/2}-\be^n)\\
	&+ \rho_S\|\bk^{n+1/2}-\bk^n\|_a^2+ \rho_S \|\bz^{n+1/2}-\bz^n\|_m^2\\
	&+a_S(\bw^{n+1/2}-\bw^n,\bw^{n+1/2}-\bw^n)+E_N^{n+1}+\rho_F\|\bu^{n+1}-(1-\Delta\eta^{n+1,n})\buh^{n}\|^2\\
	&+ D_N^{n+1} + \rho_Kh\|\bv^{n+1}-\bv^{n+1/2}\|^2
	\leq E_N^n+ E_N^{n+1/2}+C\dt((P_{in}^n)^2+(P_{out}^n)^2).
	\end{align*}
	By taking the sum from $n=0$ to $n=N-1$ on both sides we get:
	\begin{align*}
	&E_N^N+\sum_{n=0}^{N-1}D_N^{n+1}+\sum_{n=0}^{N-1} \Big( \rho_F\|\bu^{n+1}-(1-\Delta\eta^{n+1,n})\buh^{n}\|^2 +  \rho_Kh\|\bv^{n+1}-\bv^{n+1/2}\|^2\\
	&+ \rho_Kh\|\bv^{n+1/2}-\bv^n\|^2 + \rho_S\|\bk^{n+1/2}-\bk^n\|_a^2+\rho_S \|\bz^{n+1/2}-\bz^n\|_m^2\\
	&+ a_K(\be^{n+1/2}-\be^n,\be^{n+1/2}-\be^n)+ a_S(\bw^{n+1/2}-\bw^n,\bw^{n+1/2}-\bw^n)\Big)\\
	&\leq E_0 + C\dt\sum_{n=0}^{N-1}\left((P_{in}^n)^2+(P_{out}^n)^2\right).
	\end{align*}
	The term involving the inlet and outlet pressure data can be easily estimated by recalling that 
	on each subinterval $(t_n,t_{n+1})$ the pressure data is approximated by a constant,
	which is equal to the average value of the pressure over that time interval. Therefore, after using H\"older's inequality, we have:
	\begin{align*}
	\dt\sum_{n=0}^{N-1}(P_{in}^n)^2 &= \dt \sum_{n=0}^{N-1}\left( \frac{1}{\dt}\int_{n\dt}^{(n+1)\dt}P_{in}(t)\, dt \right)^2
	=\frac{1}{\dt}\sum_{n=0}^{N-1}\left(\int_{n\dt}^{(n+1)\dt} P_{in}(t)\, dt\right)^2\\
	&\leq \frac{1}{\dt}\sum_{n=0}^{N-1}\int_{n\dt}^{(n+1)\dt} P_{in}^2(t)\, dt
	\int_{n\dt}^{(n+1)\dt} 1 \, dt
	=\|P_{in}\|_{L^2(0,T)}^2.
	\end{align*}
	By using this pressure estimate to bound the right-hand side in the above energy estimate, we have obtained all the statements of the Theorem, with the constant $K$ given by $K=E_0 + C\|P_{in/out}\|_{L^2(0,T)}^2.$
\end{proof}

\section{Weak convergence of approximate solutions}
The time discretization via Lie operator splitting
defines a set of discrete approximations in time,
 which we now use to define a sequence of approximate functions on $(0,T).$ 
 Indeed, we define approximate solutions to be the {\bf{piecewise constant functions}} on each subinterval $((n-1)\dt,n\dt]$  such that
 for $n=1,2,\dots,N$:
\begin{equation}\label{approx_sol}
\begin{array}{l}
\bu_N(t,\cdot)=\bu_N^n,\; \be_N(t,\cdot)=\be_N^n,\; \bet_N(t,\cdot)=\bet_N^n,
\bv_N(t,\cdot)=\bv_N^n,\;\bv^*_N(t,\cdot)=\bv_N^{n-1/2},\;\\
\bd_N(t,\cdot)=\bd_N^n\;
\bw_N(t,\cdot)=\bw_N^n,\;\bk_N(t,\cdot)=\bk_N^n,\;\bz_N(t,\cdot)=\bz_N^n,
 \ \forall t\in ((n-1)\dt,n\dt].
 \end{array}
\end{equation}
Notice that we used $\bv^*_N(t,\cdot)=\bv_N^{n-1/2}$ to denote the approximate shell velocity functions determined by the structure subproblem, and $\bv_N(t,\cdot)=\bv_N^n$ to denote the approximate shell velocity functions determined by the  fluid subproblem. 
The staggered use of the no-slip condition in the Lie operator splitting strategy implies that these are not necessarily the same. 
However, later we will show that $\|\bv_N - \bv^*_N \|_{L^2(R;\omega)}\to 0$ as $N \to \infty$, and that they both converge to 
the structure velocity solution of the coupled FSI problem.

Using Theorem~\ref{thm:unif} we now show that these sequences are uniformly bounded in the appropriate solution spaces.
For this purpose we introduce the maximal fluid domain $\Omega_M$, which is a a cylinder of radius $R_{max}$,
and is such that it contains all the fluid domains $\Omega^n$, and, more generally, $\Omega^{\boldsymbol\eta}(t)\subset \Omega_M,\forall t \in [0,T]$. 
The existence of such a maximal domain follows directly from Theorem~\ref{thm:unif}.

We will be working with the velocity functions extended from $\Omega^n$ to $\Omega_M$ by a constant, which is equal to the trace of the fluid velocity on the lateral boundary $\Gamma^{\tilde\eta}(t)$.
More precisely, we define the fluid velocity on $\Omega_M$ by:
\begin{equation}\label{exten}
\bup_N^n(\zt,r,\tht) = 
\left\{
\begin{array}{cl} 
\bu_N^n(\zt,r,\tht), &(\zt,r,\tht)\text{ in }\Omega^n,\\
\bu_N^n(\zt,R+\et^n,\tht), &(\zt,r,\tht)\text{ in }\Omega_M\setminus \Omega^n.
\end{array}\right.
\end{equation}
For each subdivision $N$ of the time interval $(0,T)$, this defines the function $\bup_N(t,\cdot) = \bup_N^n, \forall t \in ((n-1) \Delta t, n \Delta t]$, defined on $(0,T)\times \Omega_M$.
 \begin{prop}
	The sequence $(\bup_N)_{N\in\N}$ is uniformly bounded in $L^\infty(0,T;L^2(\Omega_M)).$
\end{prop}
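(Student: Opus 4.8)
The plan is to combine two facts from Theorem~\ref{thm:unif}: the fluid kinetic energy $\rho_F\|\bu_N^n\|_{L^2(\Omega^n)}^2$ is one of the summands in $E_N^{n+1}\le K$, and the constant by which $\bu_N^n$ is extended off $\Omega^n$ in \eqref{exten} is, by the no-slip condition built into $W_F$, precisely the shell velocity $\bv_N^n$, whose weighted $L^2(R;\omega)$-norm is again controlled by $E_N^{n+1}\le K$. The observation that makes the second point exact is that after the radial reparameterization of Sec.~\ref{sec:ale} the displacement field on $\Gamma$ is purely radial, $\bet_N^n=\et_N^n\mathbf{e}_r$, so $\bphi^n(\bvarphi(\zt,\tht))$ has cylindrical coordinates $(\zt,R+\et_N^n(\zt,\tht),\tht)$; hence the no-slip condition $(\bu_N^n\circ\bphi^n)|_\Gamma\circ\bvarphi=\bv_N^n$ on $\omega$ says exactly that $\bu_N^n(\zt,R+\et_N^n(\zt,\tht),\tht)=\bv_N^n(\zt,\tht)$, i.e.\ the extension value in \eqref{exten} equals $\bv_N^n$ pointwise.

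First I would split, for each $N$ and each $n=1,\dots,N$,
\[
\|\bup_N^n\|_{L^2(\Omega_M)}^2=\|\bu_N^n\|_{L^2(\Omega^n)}^2+\|\bup_N^n\|_{L^2(\Omega_M\setminus\Omega^n)}^2 .
\]
The first term is $\le K/\rho_F$ by the Kinetic Energy Estimate. For the second, I would invoke Assumption~\ref{subgraph} and the reparameterization to write $\Omega_M\setminus\Omega^n=\{(\zt,r,\tht):R+\et_N^n(\zt,\tht)<r<R_{max}\}$, and then integrate in cylindrical coordinates, using that $\bup_N^n$ is constant along each radial segment and equal there to $\bv_N^n(\zt,\tht)$:
\[
\|\bup_N^n\|_{L^2(\Omega_M\setminus\Omega^n)}^2=\int_0^L\!\!\int_0^{2\pi}\tfrac12\bigl(R_{max}^2-(R+\et_N^n)^2\bigr)\,|\bv_N^n(\zt,\tht)|^2\,d\tht\,d\zt\le \frac{R_{max}^2}{2R}\,\|\bv_N^n\|_{L^2(R;\omega)}^2 ,
\]
and by the Kinetic Energy Estimate again $\rho_Kh\|\bv_N^n\|_{L^2(R;\omega)}^2\le E_N^{n+1}\le K$, so this term is $\le \tfrac{R_{max}^2}{2R}\cdot\tfrac{K}{\rho_Kh}$. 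Combining, $\|\bup_N^n\|_{L^2(\Omega_M)}^2\le K\bigl(\tfrac{1}{\rho_F}+\tfrac{R_{max}^2}{2R\rho_Kh}\bigr)$ for all $N$ and all $n$; since $R_{max}$ is fixed (its existence and a bound on it follow from Theorem~\ref{thm:unif}, hence it is $N$-independent) and $\bup_N(t,\cdot)=\bup_N^n$ on $((n-1)\dt,n\dt]$, this is the claimed uniform bound in $L^\infty(0,T;L^2(\Omega_M))$.

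The only genuinely delicate step is the identification of the extension constant with $\bv_N^n$ via the radial reparameterization and the no-slip relation encoded in $W_F$; this is essential because the $H^1$-norm of $\bu_N^n$ is \emph{not} uniformly bounded in $N$ (only $\sum_n D_N^{n+1}\le K$), so a naive trace estimate on $\Omega^n$ would fail, whereas $\|\bv_N^n\|_{L^2(R;\omega)}$ is uniformly bounded. Once that identification is in place everything else is the elementary computation above; note in particular that no lower bound on $R+\et_N^n$ is needed here, only the inclusion $\Omega^n\subset\Omega_M$.
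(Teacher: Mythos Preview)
Your proof is correct and follows the same approach as the paper: split $\|\bup_N^n\|_{L^2(\Omega_M)}^2$ into the $\Omega^n$ and $\Omega_M\setminus\Omega^n$ pieces, bound the first by the fluid kinetic energy from Theorem~\ref{thm:unif}, and bound the second by identifying the extension value with $\bv_N^n$ via the no-slip condition and then using the shell kinetic energy bound. Your write-up is in fact more careful than the paper's own proof, which contains a spurious sum $\sum_{n=1}^N$ (it should be a single $n$ or a $\max_n$) and a sloppy ``constant'' $C=R_{max}-(R+\et_N^n)$ that actually depends on $(\zt,\tht)$ and ignores the radial Jacobian; your explicit cylindrical computation and the resulting constant $\tfrac{R_{max}^2}{2R}$ are the correct way to finish.
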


\begin{proof}
	From the definition of the extended sequence $\bup_N$ we have:
	\begin{align*}
	\|\bup_N(t)\|_{L^2(\Omega_M)}^2&=\sum_{n=1}^N\|\bup_N^n\|_{L^2(\Omega_M)}^2
	=\sum_{n=1}^N \left(\|\bu_N^n\|_{L^2(\Omega^n)}^2 + \|\bu_N^n(\zt,R+\et_N^n,\tht)\|_{L^2(\Omega_M\setminus\Omega^n)}^2\right)\\
	&=\sum_{n=1}^N \left(\|\bu_N^n\|_{L^2(\Omega^n)}^2 + C\|\bv_N^n\|_{L^2(R;\omega)}^2\right),
	\end{align*}
	where $C = R_{max}- (R+\et_N^n)$ is the positive constant. By using Theorem~\ref{thm:unif} (Statement 1), we obtain the desired result.
\end{proof}

\begin{prop}
	The sequence $(\be_N)_{N\in \N}$ is uniformly bounded in $L^\infty(0,T;V_K)$  and the sequence $(\bw_N)_{N\in \N}$ is uniformly bounded in $L^\infty(0,T;H^1(\mathcal{N})).$
\end{prop}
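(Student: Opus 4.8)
The plan is to obtain both bounds directly from the Kinetic Energy Estimate (Statement~1 of Theorem~\ref{thm:unif}), together with coercivity of the structure operators. The only genuinely non-routine point will be the $L^2(\mathcal{N})$-control of $\bw_N$, which the energy does not see directly; this I would recover by summing the discrete angular velocities in time.

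\emph{The bound for $\be_N$.} Every summand in the semi-discrete energy \eqref{disc_kinetic} is nonnegative, so Statement~1 of Theorem~\ref{thm:unif} gives $a_K(\be_N^{n+i/2},\be_N^{n+i/2})=\langle\mathcal{L}\be_N^{n+i/2},\be_N^{n+i/2}\rangle\leq E_N^{n+i/2}\leq K$ for all $n$ and $i=1,2$. Coercivity of $\mathcal{L}$ on $H^2$, established earlier in the text, then yields $\|\be_N^{n+i/2}\|_{H^2(\omega)}^2\leq K/c$. Since each $\be_N^{n+i/2}$ belongs to $V_K$ by construction of the structure subproblem (it satisfies the clamped and periodic boundary conditions) and $\be_N(t)$ is, by \eqref{approx_sol}, piecewise constant in time and equal on each subinterval to one of these $\be_N^n$, the sequence $(\be_N)$ is bounded in $L^\infty(0,T;V_K)$ by $\sqrt{K/c}$.

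\emph{The bound for $\bw_N$.} From Statement~1 again, $a_S(\bw_N^{n+i/2},\bw_N^{n+i/2})=\sum_{i=1}^{n_E}\int_0^{l_i}Q_iH_iQ_i^T\partial_s(\bw_N^{n+i/2})_i\cdot\partial_s(\bw_N^{n+i/2})_i\leq K$, and since each $Q_iH_iQ_i^T$ is uniformly positive definite, this bounds $\|\partial_s\bw_N^{n+i/2}\|_{L^2(\mathcal{N};\R^3)}^2$ uniformly in $N$. It remains to bound $\|\bw_N^n\|_{L^2(\mathcal{N};\R^3)}$. Here I would use the discrete velocity--displacement relation built into the structure subproblem \eqref{sub_struct}, namely $\bw^{n+1}=\bw^n+\dt\,\bz^{n+1/2}$ (recalling $\bw^{n+1}=\bw^{n+1/2}$ in the fluid step), which telescopes to $\bw_N^n=\bw_0+\dt\sum_{k=0}^{n-1}\bz_N^{k+1/2}$. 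Using the equivalence of $\|\cdot\|_m$ with the $L^2(\mathcal{N})$-norm (Remark) and Statement~1 (which gives $\rho_S\|\bz_N^{k+1/2}\|_m^2\leq E_N^{k+1/2}\leq K$), the triangle inequality in $L^2(\mathcal{N};\R^3)$ and $n\dt\leq T$ give
\[
\|\bw_N^n\|_{L^2(\mathcal{N};\R^3)}\leq\|\bw_0\|_{L^2(\mathcal{N};\R^3)}+\dt\sum_{k=0}^{n-1}\|\bz_N^{k+1/2}\|_{L^2(\mathcal{N};\R^3)}\leq\|\bw_0\|_{L^2(\mathcal{N};\R^3)}+C\,T\sqrt{K}.
\]
Combined with the uniform bound on $\partial_s\bw_N$ and the piecewise-constant-in-time structure of $\bw_N$, this gives the asserted uniform bound in $L^\infty(0,T;H^1(\mathcal{N}))$.

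\emph{The main obstacle.} The delicate step is precisely the $L^2(\mathcal{N})$-bound for $\bw_N$: unlike the shell, where coercivity of $\mathcal{L}$ controls the full $H^2$-norm, the elastic energy of the rod net controls only $\partial_s\bw$, and the net carries no Dirichlet (clamping) condition, so no Poincar\'e inequality is available to close the estimate on its own. The time-summation argument above sidesteps this at the cost of a constant growing with $T$; one could instead try to exploit the inextensibility constraint $\partial_s\bd_i+\bt_i\times\bw_i=0$ together with the $H^1(\mathcal{N})$-bound on $\bd_N=\be_N\circ\bpi$ inherited from the bound on $\be_N$, but that constraint controls only the component of $\bw_i$ orthogonal to the tangent $\bt_i$, so by itself it does not suffice.
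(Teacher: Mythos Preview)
Your argument for $\be_N$ is exactly the paper's: coercivity of $\mathcal{L}$ on $H^2(\omega)$ together with the kinetic energy bound.

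For $\bw_N$ the two proofs diverge, and here your version is the more careful one. The paper simply asserts $\|\bw_N(t)\|_{L^2(\mathcal{N})}^2\leq\|\bw_N(t)\|_m^2\leq K$ as a consequence of Statement~1 of Theorem~\ref{thm:unif}. But, as you correctly flag, the discrete energy \eqref{disc_kinetic} contains $\rho_S\|\bz_N\|_m^2$ (angular velocity) and $a_S(\bw_N,\bw_N)=\|\bw_N\|_r^2$ (which controls only $\partial_s\bw_N$), not $\|\bw_N\|_m^2$ itself; and, since the net carries no Dirichlet condition, there is no Poincar\'e inequality to close the gap. So the paper's one-line justification is at best elliptical. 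Your time-summation argument---telescoping $\bw_N^n=\bw_0+\dt\sum_{k=0}^{n-1}\bz_N^{k+1/2}$ via the update rule in \eqref{sub_struct} and then using the uniform bound on $\|\bz_N\|_m$ from the energy---is a clean and correct way to supply precisely the missing $L^2(\mathcal{N})$ control. The price you pay is a constant that grows linearly in $T$, which is harmless here. In short, you have identified the genuine obstacle and resolved it; the paper glosses over it.
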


\begin{proof}
	The first statement of Theorem~\ref{thm:unif} states that $E_N^{n+1}\leq K,\; \forall n = 0,\dots,N-1,$ which implies
	\begin{equation*}
	\|\be_N(t)\|_{H^2(\omega)}^2\leq a_K(\be_N(t),\be_N(t))\leq K, \quad \forall t \in [0,T].
	\end{equation*}
	Therefore, 
	\begin{equation*}
	\|\be_N\|_{L^\infty(0,T;V_K)}\leq K.
	\end{equation*}
	The boundedness of the sequence $(\bw_N)_{N\in\N}$ also follows from the first statement of Theorem~\ref{thm:unif}. Namely, we have
	\begin{align*}
	&\|\bw_N(t)\|_{L^2(\mathcal{N})}^2\leq \|\bw_N(t)\|_m^2\leq K,\\
	&\|\partial_s\bw_N(t)\|_{L^2(\mathcal{N})}^2\leq a_S(\bw_N(t),\bw_N(t))\leq K,
	\end{align*}
	which implies the desired bound.
\end{proof}
Notice that from the uniform energy estimates we do not get any bounds on the sequence $(\bd_N)_{N\in\N}.$ However, using the condition of inextensibility and unshearability, together with the regularity of $\bw_N,$ we can easily prove the following result on the boundedness of the sequence $(\bd_N)_{N\in\N}.$

\begin{cor}
	The sequence $(\bd_N)_{N\in \N}$ is uniformly bounded in $L^\infty(0,T;H^1(\mathcal{N})).$
\end{cor}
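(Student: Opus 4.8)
The plan is to extract the bound from the two algebraic constraints that are built into the discrete solution space $W_S$ but that the energy functional does not control directly: the constitutive/inextensibility relation $\partial_s\bd_i+\bt_i\times\bw_i=0$, and the kinematic shell--mesh coupling $\be\circ\bpi=\bd$. Since for every $n$ the discrete structure solution lies in $W_S\subset V_S$, each approximation satisfies $\partial_s(\bd_N^n)_i=-\bt_i\times(\bw_N^n)_i$ on $(0,l_i)$; as $\bt_i$ is a unit vector this yields the pointwise estimate $|\partial_s(\bd_N^n)_i|\le|(\bw_N^n)_i|$, hence
\[
\|\partial_s\bd_N(t)\|_{L^2(\mathcal{N};\R^3)}\le\|\bw_N(t)\|_{L^2(\mathcal{N};\R^3)}\le K,\qquad t\in[0,T],
\]
where the last inequality is the uniform bound on $(\bw_N)_{N\in\N}$ in $L^\infty(0,T;H^1(\mathcal{N}))$ from the previous proposition.

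Next I would bound $\bd_N$ itself in $L^2(\mathcal{N};\R^3)$. Because the discrete structure solutions lie in $W_S$, one has $(\bd_N^n)_i=\be_N^n\circ\bpi_i$ on $(0,l_i)$, and the previous proposition gives $\|\be_N(t)\|_{H^2(\omega)}\le K$ uniformly in $t$ and $N$. Since $\omega\subset\R^2$, the Sobolev embedding $H^2(\omega)\hookrightarrow C(\overline\omega)$ gives $\|\be_N(t)\|_{C(\overline\omega)}\le CK$; composing with the fixed, time-independent parameterizations $\bpi_i:[0,l_i]\to\overline\omega$ then yields $\|(\bd_N^n)_i\|_{L^\infty(0,l_i)}\le CK$, whence $\|\bd_N(t)\|_{L^2(\mathcal{N};\R^3)}^2=\sum_{i=1}^{n_E}\|(\bd_N^n)_i\|_{L^2(0,l_i)}^2\le C\sum_{i=1}^{n_E}l_i$, uniformly in $t$ and $N$. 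Adding this to the previous display gives $\|\bd_N\|_{L^\infty(0,T;H^1(\mathcal{N};\R^3))}\le K$, which is the claim.

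The only subtle point — and the reason this estimate is isolated as a corollary rather than folded into Theorem~\ref{thm:unif} — is that no term of the discrete energy $E_N^{n+i/2}$ controls $\bd_N$: the inextensibility relation by itself only bounds $\partial_s\bd_N$, so a zeroth-order anchor is required, and that anchor is supplied precisely by the kinematic coupling to the (clamped, hence $L^\infty(\omega)$-bounded) Koiter shell. If one preferred to avoid invoking $H^2\hookrightarrow C$, an alternative would be to fix a graph vertex attached to the clamped part of $\partial\omega$, where $\bd_N$ vanishes, and propagate that value along each edge using $\partial_s\bd_i=-\bt_i\times\bw_i$ together with a Poincar\'e-type inequality on the net; the coupling argument above is the shorter route.
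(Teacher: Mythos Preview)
Your argument is correct and matches the paper's intent: the paper states only that the bound follows ``using the condition of inextensibility and unshearability, together with the regularity of $\bw_N$,'' which is exactly your derivative estimate $\|\partial_s\bd_N\|_{L^2(\mathcal{N})}\le\|\bw_N\|_{L^2(\mathcal{N})}$. You go further than the paper by spelling out how the zeroth-order $L^2(\mathcal{N})$ bound is obtained --- via the kinematic coupling $\bd_N=\be_N\circ\bpi$ built into $W_S$ together with $H^2(\omega)\hookrightarrow C(\overline\omega)$ --- and your observation that some anchor is genuinely needed (since inextensibility alone controls only the derivative) is a point the paper leaves implicit.
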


\noindent
The following uniform bounds for the shell and mesh approximate velocities are a direct consequence of Theorem~\ref{thm:unif}.

\begin{prop}
	The following statements hold:
	\begin{enumerate}[(i)]
		\item $(\bv_N)_{N\in\N}$ is uniformly bounded in $L^\infty(0,T;L^2(R;\omega)),$\\
		$(\bv_N^*)_{N\in\N}$ is uniformly bounded in $L^\infty(0,T;L^2(R;\omega)),$
		\item $(\bk_N)_{N\in\N}$ is uniformly bounded in $L^\infty(0,T;L^2(\mathcal{N})),$\\
		$(\bz_N)_{N\in\N}$ is uniformly bounded in $L^\infty(0,T;L^2(\mathcal{N})).$
	\end{enumerate}
\end{prop}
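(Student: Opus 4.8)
The plan is to read all four bounds directly off the definition of the discrete kinetic energy $E_N^{n+i/2}$ in \eqref{disc_kinetic}, combined with Statement~1 of Theorem~\ref{thm:unif}. First I would note that every summand appearing in $E_N^{n+i/2}$ is nonnegative: the fluid term and the inertial terms $\rho_K h\int_{\omega}|\bv_N^{n+i/2}|^2R$, $\rho_S\sum_{i=1}^{n_E}A_i\int_0^{l_i}|(\bk_N^{n+i/2})_i|^2$ and $\rho_S\sum_{i=1}^{n_E}\int_0^{l_i}M_i|(\bz_N^{n+i/2})_i|^2$ are manifestly $\geq 0$ because $A_i>0$ and $M_i$ is positive definite, while $a_K(\be_N^{n+i/2},\be_N^{n+i/2})\geq 0$ by the $H^2$-coercivity of $\mathcal{L}$ established above and $a_S(\bw_N^{n+i/2},\bw_N^{n+i/2})\geq 0$ since $Q_iH_iQ_i^T$ is positive definite. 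Consequently each individual term is bounded by $E_N^{n+i/2}$, which is itself bounded by the $N$-independent constant $K$ of Theorem~\ref{thm:unif}.

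For (i), the inequality $\rho_K h\|\bv_N^{n+i/2}\|_{L^2(R;\omega)}^2\leq E_N^{n+i/2}\leq K$, valid for all $n$ and $i=1,2$, immediately gives that $\|\bv_N\|_{L^\infty(0,T;L^2(R;\omega))}$ and $\|\bv^*_N\|_{L^\infty(0,T;L^2(R;\omega))}$ are both bounded by $\sqrt{K/(\rho_K h)}$, since $\bv_N$ and $\bv^*_N$ are piecewise constant in time with values among the $\bv_N^{n+1}$ and $\bv_N^{n+1/2}$ respectively. For (ii), $\sum_{i=1}^{n_E}A_i\|(\bk_N^{n+1})_i\|_{L^2(0,l_i)}^2=\|\bk_N^{n+1}\|_a^2\leq E_N^{n+1}\leq K$, and since $A_i$ is bounded below by a positive constant depending only on the mesh geometry this controls $\|\bk_N^{n+1}\|_{L^2(\mathcal{N})}^2$ uniformly in $N$; likewise $\|\bz_N^{n+1}\|_m^2\leq E_N^{n+1}\leq K$, and by the Remark asserting that $\|\cdot\|_m$ is equivalent to the standard $L^2(\mathcal{N})$ norm we obtain the claimed uniform bound on $\bz_N$.

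There is no genuine obstacle here; the proposition is a bookkeeping consequence of Theorem~\ref{thm:unif}. The only points that need a word of justification are the sign and coercivity observations used to discard the remaining terms of $E_N^{n+i/2}$, and the fact that the constants entering through $A_i$, $M_i$, and the norm equivalence $\|\cdot\|_m\sim\|\cdot\|_{L^2(\mathcal{N})}$ are independent of $N$ — which holds because they depend only on the fixed material and geometric parameters of the mesh.
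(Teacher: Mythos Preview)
Your proposal is correct and matches the paper's approach: the paper simply states that these bounds are ``a direct consequence of Theorem~\ref{thm:unif}'' without further elaboration, and your argument spells out exactly how each bound is read off from the nonnegative summands of $E_N^{n+i/2}$ together with Statement~1 of that theorem. The additional care you take with the norm equivalences for $\|\cdot\|_a$ and $\|\cdot\|_m$ is appropriate and does not deviate from the intended reasoning.
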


To pass to the limit in the weak formulations associated with approximate solutions, we need additional regularity in time of the 
sequences $(\be_N)_{N\in\N},(\bd_N)_{N\in\N}$ and $(\bw_N)_{N\in\N}$.
For this purpose, we introduce a slightly different set of approximate functions. Namely, for each fixed $\dt$, define $\bel_N,\bdl_N$ and $\bwl_N$ to be  continuous, {\bf linear functions} on each subinterval $[(n-1)\dt,n\dt],\; n=1,\dots,N$, and such that
\begin{align}\label{linear_approx}
\begin{split}
\bel_N(n\dt,\cdot)&=\be_N(n\dt,\cdot),
\bvl_N(n\dt,\cdot)=\bv_N(n\dt,\cdot),\\
\bdl_N(n\dt,\cdot)&=\bd_N(n\dt,\cdot),
\bwl_N(n\dt,\cdot)=\bw_N(n\dt,\cdot),\\
\bkl_N(n\dt,\cdot)&=\bk_N(n\dt,\cdot),
\bzl_N(n\dt,\cdot)=\bz_N(n\dt,\cdot).
\end{split}
\end{align}
We now observe:
$$\partial_t \bel_N(t)=\dis\frac{\be_N^{n}-\be_N^{n-1}}{\dt} =\dis\frac{\be_N^{n-1/2}-\be_N^{n-1}}{\dt}=\bv_N^{n-1/2},\quad t\in((n-1)\dt,n\dt].$$
Since $\bv^*_N$ is a piecewise constant function, as defined before via $\bv^*_N(t,\cdot)=\bv_N^{n-1/2},$ for $t\in ((n-1)\dt,n\dt],$ we see that
\begin{equation}\label{vstar}
\partial_t \bel_N=\bv^*_N\text{ a.e.  on } (0,T).
\end{equation}
From \eqref{vstar}, and from the uniform boundedness 
of $E_N^{n+i/2}$ provided by Theorem~\ref{thm:unif}, 
we obtain the uniform boundedness of $(\bel_N)_{N\in\N}$ in $W^{1,\infty}(0,T;L^2(R;\omega))$.
Now, since sequences $(\bel_N)_{N\in\N}$ and $(\be_N)_{N\in\N}$ have the same limit (distributional limit is unique), we get that the weak* limit 
of $\be_N$ is, in fact, in  $W^{1,\infty}(0,T;L^2(R;\omega))$. 

Using analogous arguments, one also obtains that the weak* limits of 
$(\bd_N)_{N\in\N}$ and $(\bw_N)_{N\in\N}$ are in $W^{1,\infty}(0,T;L^2(\mathcal{N}))$.
This is because the corresponding velocity approximations are uniformly bounded
in the corresponding norms, as stated in Statement 3(b) of Theorem~\ref{thm:unif}.

From the uniform boundedness of approximate sequences we can now conclude that for each approximate sequence there exists a 
weakly- or a weakly*-convergent subsequence (depending on the function space).
With a slight abuse of notation, we use the same notation to denote the convergent subsequences.
More precisely, the following result holds:
\begin{lemma}\label{lemma:weak_conv}
	There exist subsequences $(\bup_N)_{N\in\N},(\be_N)_{N\in\N},(\bv_N)_{N\in\N},(\bv^*_N)_{N\in\N},\\
	(\bd_N)_{N\in\N},(\bw_N)_{N\in\N},(\bk_N)_{N\in\N},(\bz_N)_{N\in\N},$ and the functions $\bup \in L^\infty(0,T;L^2(\Omega_M)),$\\
	$\be \in L^\infty(0,T;V_K)\cap W^{1,\infty}(0,T;L^2(R;\omega)),
	\bd,\bw \in L^\infty(0,T;H^1(\mathcal{N}))\cap W^{1,\infty}(0,T;L^2(\mathcal{N})),$\\
	$\bv,\bv^*\in L^\infty(0,T;L^2(R;\omega))$, and
	$\bk,\bz\in L^\infty(0,T;L^2(\mathcal{N}))$, such that
	\begin{align}
	\begin{split}
	&\bup_N\rightharpoonup\bup \text{ weakly* in } L^\infty(0,T;L^2(\Omega_M)),\\
	&\be_N \rightharpoonup \be \text{ weakly* in } L^\infty(0,T;V_K),\\
	&\be_N \rightharpoonup \be \text{ weakly* in } W^{1,\infty}(0,T;L^2(R;\omega)),\\
	&\bd_N \rightharpoonup \bd \text{ weakly* in } L^\infty(0,T;H^1(\mathcal{N})),\\
	&\bd_N \rightharpoonup \bd \text{ weakly* in } W^{1,\infty}(0,T;L^2(\mathcal{N})),\\
	&\bw_N \rightharpoonup \bw \text{ weakly* in } L^\infty(0,T;H^1(\mathcal{N})),\\
	&\bw_N \rightharpoonup \bw \text{ weakly* in } W^{1,\infty}(0,T;L^2(\mathcal{N})),\\
	&\bv_N \rightharpoonup \bv \text{ weakly* in } L^\infty(0,T;L^2(R;\omega)),\\
	&\bv^*_N \rightharpoonup \bv^* \text{ weakly* in } L^\infty(0,T;L^2(R;\omega)),\\
	&\bk_N \rightharpoonup \bk \text{ weakly* in } L^\infty(0,T;L^2(\mathcal{N})),\\
	&\bz_N \rightharpoonup \bz \text{ weakly* in } L^\infty(0,T;L^2(\mathcal{N})).	
	\nonumber
	\end{split}
	\end{align}
	Furthermore, $$\bv=\bv^*.$$
\end{lemma}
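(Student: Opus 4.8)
The plan is to show that $\bv_N-\bv^*_N\to 0$ strongly in $L^2(0,T;L^2(R;\omega))$; once this is known, the two weak$^*$ limits identified in Lemma~\ref{lemma:weak_conv} must coincide by uniqueness of limits. The whole point is that, after integration in time, the quantity $\|\bv_N-\bv^*_N\|^2$ is exactly one of the numerical-dissipation sums already controlled uniformly in $N$ by Theorem~\ref{thm:unif}, and the explicit factor $\dt$ coming from the length of each time subinterval supplies the vanishing rate.

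First I would unwind the definitions of the two staggered piecewise-constant families. By \eqref{approx_sol}, on each subinterval $((n-1)\dt,n\dt]$ one has $\bv_N=\bv_N^n$ (the shell velocity produced by the fluid subproblem) and $\bv^*_N=\bv_N^{n-1/2}$ (the shell velocity produced by the structure subproblem), so $\bv_N-\bv^*_N=\bv_N^n-\bv_N^{n-1/2}$ there. Integrating over $(0,T)$,
\begin{equation*}
\|\bv_N-\bv^*_N\|_{L^2(0,T;L^2(R;\omega))}^2=\dt\sum_{n=1}^{N}\|\bv_N^n-\bv_N^{n-1/2}\|_{L^2(R;\omega)}^2 .
\end{equation*}

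Next I would invoke Statement 3(a) of Theorem~\ref{thm:unif}, which gives $\sum_{n=0}^{N-1}\rho_K h\,\|\bv^{n+1}-\bv^{n+1/2}\|_{L^2(R;\omega)}^2\leq K$ with $K$ independent of $\dt$ (hence of $N$). Reindexing, the sum appearing above is at most $K/(\rho_K h)$, so
\begin{equation*}
\|\bv_N-\bv^*_N\|_{L^2(0,T;L^2(R;\omega))}^2\leq\frac{\dt\,K}{\rho_K h}=\frac{TK}{N\rho_K h}\longrightarrow 0\quad\text{as }N\to\infty,
\end{equation*}
and therefore $\bv_N-\bv^*_N\to 0$ strongly in $L^2(0,T;L^2(R;\omega))$. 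Finally, since weak$^*$ convergence in $L^\infty(0,T;L^2(R;\omega))$ implies weak convergence in $L^2(0,T;L^2(R;\omega))$ on the bounded interval $(0,T)$, Lemma~\ref{lemma:weak_conv} yields $\bv_N\rightharpoonup\bv$ and $\bv^*_N\rightharpoonup\bv^*$ in $L^2(0,T;L^2(R;\omega))$; subtracting these and combining with the strong convergence just proved gives $\bv-\bv^*=0$.

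There is essentially no deep obstacle here: the argument is a direct consequence of the staggered structure of the Lie splitting together with the numerical-dissipation control. The only thing requiring care is the bookkeeping between the two velocity families — matching the index $n-1/2$ appearing in $\bv^*_N$ on $((n-1)\dt,n\dt]$ with the correct term of the sum in Theorem~\ref{thm:unif} — and the routine observation that the uniform $L^\infty_t L^2$ bounds upgrade to weak $L^2_t L^2$ convergence on a finite time interval.
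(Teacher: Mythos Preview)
Your argument is correct and coincides with the paper's own proof: both compute $\|\bv_N-\bv^*_N\|_{L^2(0,T;L^2(R;\omega))}^2$ as $\dt$ times the numerical-dissipation sum controlled by Statement~3(a) of Theorem~\ref{thm:unif}, then let $\dt\to 0$. Your version is slightly more explicit about the final uniqueness-of-limits step, but the substance is identical.
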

\begin{proof}
	We only need to show that $\bv=\bv^*.$ To show this, we use the definition of approximate sequences as step functions in $t$
	and Statement 3 of Theorem~\ref{thm:unif} to obtain:
	\begin{align*}
	\|\bv_N-\bv^*_N\|_{L^2(0,T;L^2(R;\omega))}^2 &= \int_0^T\|\bv_N-\bv^*_N\|_{L^2(R;\omega)}^2\;dt
	=\sum_{n=0}^{N-1}\int_{t_n}^{t_{n+1}}\|\bv_N^{n+1}-\bv_N^{n+1/2}\|_{L^2(R;\omega)}^2\;dt\\
	&=\sum_{n=0}^{N-1}\|\bv_N^{n+1}-\bv_N^{n+1/2}\|_{L^2(R;\omega)}^2\;\dt \leq K\dt.
	\end{align*}
	By letting $\dt\to 0$, we get that $\bv=\bv^*.$
\end{proof}

\section{Strong convergence of approximate sequences}\label{sec:StrongConvergence}
To show that the limits obtained in the previous section satisfy the weak form of the coupled FSI problem
\eqref{FSIeq1}-\eqref{FSIeqIC}, we need to show that sequences of approximate functions converge strongly in the appropriate function spaces. 

\subsection{Strong convergence of shell displacements}\label{subsec:conv_shell}
We first show that $\be_N\to\be\text{ in }L^{\infty}(0,T;C(\overline{\omega})).$ For this purpose, we start by  investigating 
the convergence of $(\bel_N)_{N\in\N}$.  
Recall that from Lemma~\ref{lemma:weak_conv} we have 
\begin{equation*}
(\bel_N)_{N\in\N}\text{ is bounded in }L^\infty(0,T;V_K)\cap W^{1,\infty}(0,T;L^2(R;\omega)).
\end{equation*}
Convergence of $(\bel_N)_{N\in\N}$ then follows from the classical Aubin-Lions compactness result, stated below for completeness:
\begin{lemma}[Aubin-Lions]\label{AubinLions}
Let $X_0$, $X$ and $X_1$ be three Banach spaces with $X_0\subseteq X \subseteq X_1$. 
Suppose that $X_0$ is compactly embedded in $X$, and that $X$ is continuously embedded in $X_1$. 
For $1\le p, q \le \infty$, let
$W = \{ u \in L^p([0,T]; X_0)  : \partial_t u\in L^q([0,T];X_1) \}$.
\begin{enumerate}
\item If $p < \infty$, then the embedding of $W$ into $L^p([0,T]; X)$ is compact.
\item If $p = \infty$, and $q > 1$, then the embedding of $W$ into $C([0,T]; X)$ is compact.
\end{enumerate}
\end{lemma}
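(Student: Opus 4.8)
The plan is to derive both statements from the classical interpolation (Ehrling--Lions) inequality, which I would establish first: since $X_0\hookrightarrow X$ is compact and $X\hookrightarrow X_1$ is continuous, for every $\varepsilon>0$ there is $C_\varepsilon>0$ such that
\begin{equation*}
\|v\|_X\le \varepsilon\|v\|_{X_0}+C_\varepsilon\|v\|_{X_1},\qquad v\in X_0.
\end{equation*}
This follows by contradiction: if it failed for some $\varepsilon_0$, one obtains $v_n\in X_0$ with $\|v_n\|_X=1$ and $\varepsilon_0\|v_n\|_{X_0}+n\|v_n\|_{X_1}<1$; then $(v_n)$ is bounded in $X_0$ and $v_n\to0$ in $X_1$, so along a subsequence $v_n\to v$ in $X$ with $\|v\|_X=1$, while continuity of $X\hookrightarrow X_1$ forces $v=0$, a contradiction.

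For Part 1 ($p<\infty$) I would let $F\subset W$ be bounded and invoke the vector-valued Fréchet--Kolmogorov (Simon) criterion: $F$ is relatively compact in $L^p([0,T];X)$ provided (a) for all $0<s<t<T$ the set $\{\int_s^t f(\sigma)\,d\sigma : f\in F\}$ is relatively compact in $X$, and (b) $\sup_{f\in F}\|f(\cdot+h)-f\|_{L^p([0,T-h];X)}\to0$ as $h\downarrow0$. Item (a) is immediate: Hölder gives $\|\int_s^t f\|_{X_0}\le(t-s)^{1-1/p}\|f\|_{L^p([0,T];X_0)}$, which is bounded, and $X_0\hookrightarrow X$ is compact. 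For (b), write $f(t+h)-f(t)=\int_t^{t+h}\partial_t f(\sigma)\,d\sigma$ (first for $f$ smooth in time, then by density); a short computation, using Hölder in $\sigma$ when $q>1$ (and comparison with time-averages when $q=1$), bounds $\|f(\cdot+h)-f\|_{L^p([0,T-h];X_1)}$ by a quantity $\omega(h)$ with $\omega(h)\to0$, uniformly over $F$. Applying the interpolation inequality pointwise in $t$ and integrating over $[0,T-h]$ (this is where $p<\infty$ is used) yields
\begin{equation*}
\|f(\cdot+h)-f\|_{L^p([0,T-h];X)}\le 2\varepsilon\,\|f\|_{L^p([0,T];X_0)}+C_\varepsilon\,\omega(h),
\end{equation*}
so sending $h\downarrow0$ and then $\varepsilon\downarrow0$ gives (b), hence relative compactness in $L^p([0,T];X)$.

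For Part 2 ($p=\infty$, $q>1$) I would argue via Arzelà--Ascoli in $C([0,T];X)$. A bounded $F\subset W$ has $\|f(t)\|_{X_0}\le M$ for a.e.\ $t$, and since $q>1$ each $f$ admits a representative with $\|f(t)-f(s)\|_{X_1}\le|t-s|^{1-1/q}\|\partial_t f\|_{L^q([0,T];X_1)}$, so, after modification on a null set, $f$ is Hölder continuous into $X_1$ and $\|f(t)\|_{X_0}\le M$ for every $t$. Equi-boundedness and pointwise relative compactness in $X$ follow from this uniform $X_0$-bound and compactness of $X_0\hookrightarrow X$; equicontinuity into $X$ follows from the interpolation inequality combined with the Hölder bound, since $\|f(t)-f(s)\|_X\le\varepsilon\|f(t)-f(s)\|_{X_0}+C_\varepsilon\|f(t)-f(s)\|_{X_1}\le 2\varepsilon M+C_\varepsilon|t-s|^{1-1/q}\sup_{F}\|\partial_t f\|_{L^q}$, which is below any prescribed tolerance once $|t-s|$ is small, uniformly over $F$. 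Arzelà--Ascoli then gives relative compactness of $F$ in $C([0,T];X)$.

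The main obstacle I anticipate is technical rather than conceptual: in Part 1 it is the justification of the Banach-valued time-translation compactness criterion together with the identity $f(t+h)-f(t)=\int_t^{t+h}\partial_t f$ and the correct modulus $\omega(h)$ (with the precise role of $p<\infty$ and of $q\ge1$), and in Part 2 it is verifying that a weak time derivative in $L^q$ with $q>1$ genuinely yields a Hölder-continuous representative. All of this is exactly the content of J.\ Simon's compactness theorem, so in the write-up I would state the lemma with a reference to it rather than reproduce the full argument.
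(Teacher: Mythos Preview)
The paper does not actually prove this lemma: it is stated ``for completeness'' as the classical Aubin--Lions compactness result and then immediately applied (with $X_0=H_0^2$, $X=H^\alpha$, $X_1=L^2$, $p=q=\infty$) to the sequence $(\bel_N)$. Your proposal, by contrast, supplies a full proof sketch following Simon's approach---Ehrling's inequality, the Banach-valued Fr\'echet--Kolmogorov criterion for Part~1, and Arzel\`a--Ascoli combined with the H\"older estimate from $q>1$ for Part~2---and is correct as written. So there is no discrepancy to flag: you have simply provided the argument that the paper omits as standard, and you correctly anticipate at the end that in practice one would cite Simon rather than reproduce the details.
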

Indeed, by taking $X_0 = H_0^2$, $X = H^{\alpha}, 0 < \alpha < 2$, $X_1 = L^2$, $p,q = \infty$, we obtain
convergence of $\bar\eta_N$ in $L^{\infty}(0,T;H^s(\omega))$, $0 < s<2$.
Because sequences $(\bel_N)_{N\in\N}$ and $(\be_N)_{N\in\N}$ have the same limit, we obtain the following result:
\begin{prop}
$\bel_N\to\be\text{ in }L^{\infty}(0,T;H^s(\omega)),$ for $0<s<2.$
\end{prop}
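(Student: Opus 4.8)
The plan is to derive the claim as a direct application of the Aubin--Lions compactness lemma (Lemma~\ref{AubinLions}, case (2)) to the piecewise-linear interpolants $(\bel_N)_{N\in\N}$, followed by an identification of the limit. Concretely, I would invoke the lemma with $X_0=V_K$, $X=H^s(\omega;\R^3)$ for an arbitrary but fixed $s\in(0,2)$, and $X_1=L^2(\omega;\R^3)$. The required chain $X_0\subseteq X\subseteq X_1$ holds because $V_K\subset H^2(\omega;\R^3)$ embeds compactly into $H^s(\omega)$ on the bounded set $\omega$ when $s<2$, while $H^s(\omega)$ embeds continuously into $L^2(\omega)$; and since $L^2(R;\omega)$ coincides, up to the positive constant weight $R$, with $L^2(\omega)$, the bounds stated in the weighted norm transfer verbatim.

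Next I would verify the hypotheses. By Lemma~\ref{lemma:weak_conv}, $(\bel_N)_{N\in\N}$ is bounded in $L^\infty(0,T;V_K)$, so one takes $p=\infty$; and by \eqref{vstar} one has $\partial_t\bel_N=\bv^*_N$, which Theorem~\ref{thm:unif} shows (through the kinetic energy term in \eqref{disc_kinetic}) to be bounded in $L^\infty(0,T;L^2(R;\omega))$, so one may take $q=\infty>1$. Case (2) of Lemma~\ref{AubinLions} then produces a subsequence, still denoted $(\bel_N)$, converging in $C([0,T];H^s(\omega))$, and hence in $L^\infty(0,T;H^s(\omega))$, to some limit $\tilde{\be}$.

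It then remains to identify $\tilde{\be}$ with $\be$. From \eqref{approx_sol} and \eqref{linear_approx}, on each subinterval $((n-1)\dt,n\dt]$ the difference $\bel_N-\be_N$ is the affine function vanishing at $t=n\dt$ and equal to $\be_N^{n-1}-\be_N^n=-\dt\,\bv_N^{n-1/2}$ at $t=(n-1)\dt$, so that $\|\bel_N-\be_N\|_{L^\infty(0,T;L^2(R;\omega))}\le \dt\,\max_n\|\bv_N^{n-1/2}\|_{L^2(R;\omega)}\le C\dt\to 0$. Hence $\bel_N$ and $\be_N$ share the same distributional limit; since $\be_N\rightharpoonup\be$ weakly* in $L^\infty(0,T;V_K)$ by Lemma~\ref{lemma:weak_conv}, and strong convergence in $L^\infty(0,T;H^s(\omega))$ implies convergence in the same weaker topology, uniqueness of limits forces $\tilde{\be}=\be$. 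Finally, because the limit $\be$ is independent of the extracted subsequence, the usual subsequence argument promotes the convergence to the whole sequence, giving $\bel_N\to\be$ in $L^\infty(0,T;H^s(\omega))$ for every $s\in(0,2)$.

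I do not expect a genuine obstacle here: the substantive work was already invested in the uniform energy estimates of Theorem~\ref{thm:unif}. The only mildly delicate point is the identification step, i.e.\ checking that the linear and the piecewise-constant reconstructions of the same discrete data have the same limit, which reduces to the elementary estimate above. The serious compactness difficulties of the paper, tied to the moving fluid domains and the generalized Aubin--Lions result of \cite{BorSunCompact}, arise later, for the fluid velocities, not here.
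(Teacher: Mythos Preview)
Your proposal is correct and follows essentially the same route as the paper: apply case~(2) of the Aubin--Lions lemma with $X_0=V_K\subset H^2(\omega)$, $X=H^s(\omega)$, $X_1=L^2(\omega)$ and $p=q=\infty$, using the bounds recalled just before the proposition, and then identify the limit with $\be$ via the fact that $\bel_N$ and $\be_N$ share the same limit. You supply a bit more detail on the identification step (the explicit $\|\bel_N-\be_N\|_{L^\infty(0,T;L^2)}\le C\dt$ estimate) and the passage from subsequence to full sequence, which the paper leaves implicit.
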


We can now prove the following result on strong convergence of structure displacements.
\begin{prop}
	$\be_N\to\be\text{ in }L^{\infty}(0,T;C(\overline{\omega})).$
\end{prop}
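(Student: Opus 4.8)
The plan is to upgrade the already-established convergence $\bel_N\to\be$ in $L^\infty(0,T;H^s(\omega))$ (Proposition above, with $0<s<2$) to convergence in $L^\infty(0,T;C(\overline\omega))$ by interpolation, and then to transfer the result from $\bel_N$ to $\be_N$ using the uniform-in-time smallness of the difference $\be_N-\bel_N$. First I would fix $s\in(1,2)$; since $\omega\subset\R^2$, the Sobolev embedding $H^s(\omega)\hookrightarrow C(\overline\omega)$ holds (this requires $s>1$, which is why we need $s>1$ rather than merely $s<2$). Hence from the preceding Proposition we immediately get
\begin{equation*}
\bel_N\to\be\quad\text{in }L^\infty(0,T;C(\overline\omega)).
\end{equation*}

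Next I would control $\|\be_N-\bel_N\|_{L^\infty(0,T;C(\overline\omega))}$. On each subinterval $((n-1)\dt,n\dt]$ we have $\be_N(t)=\be_N^n$ while $\bel_N(t)$ is the linear interpolant between $\be_N^{n-1}$ and $\be_N^n$, so pointwise in $t$ the difference $\be_N(t)-\bel_N(t)$ is a convex-combination multiple of $\be_N^n-\be_N^{n-1}$; in particular
\begin{equation*}
\|\be_N(t)-\bel_N(t)\|_{C(\overline\omega)}\le \|\be_N^n-\be_N^{n-1}\|_{C(\overline\omega)}\le C\,\|\be_N^n-\be_N^{n-1}\|_{H^s(\omega)}.
\end{equation*}
To bound this, I would interpolate between $H^2$ and $L^2$: with $\theta=s/2\in(0,1)$,
\begin{equation*}
\|\be_N^n-\be_N^{n-1}\|_{H^s(\omega)}\le C\,\|\be_N^n-\be_N^{n-1}\|_{H^2(\omega)}^{\theta}\,\|\be_N^n-\be_N^{n-1}\|_{L^2(\omega)}^{1-\theta}.
\end{equation*}
Now, writing $\be_N^n-\be_N^{n-1}=(\be_N^n-\be_N^{n-1/2})+(\be_N^{n-1/2}-\be_N^{n-1})$ and recalling that in the fluid subproblem the displacement does not change ($\be_N^n=\be_N^{n-1/2}$), while in the structure subproblem $\be_N^{n-1/2}-\be_N^{n-1}=\dt\,\bv_N^{n-1/2}$, the $L^2$ factor is $\dt\,\|\bv_N^{n-1/2}\|_{L^2(\omega)}\le C\dt$ by the kinetic-energy estimate (Statement 1 of Theorem~\ref{thm:unif}); the $H^2$ factor is bounded by the uniform elastic-energy bound, hence by a constant independent of $N$. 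Therefore $\|\be_N^n-\be_N^{n-1}\|_{H^s(\omega)}\le C\,\dt^{1-\theta}$, uniformly in $n$ and $N$, which gives
\begin{equation*}
\|\be_N-\bel_N\|_{L^\infty(0,T;C(\overline\omega))}\le C\,\dt^{1-\theta}\to 0.
\end{equation*}

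Combining the two estimates via the triangle inequality yields $\be_N\to\be$ in $L^\infty(0,T;C(\overline\omega))$, which is the claim. The main obstacle, I expect, is the first step: one must be sure the embedding $H^s(\omega)\hookrightarrow C(\overline\omega)$ is available, i.e. that the earlier Aubin–Lions argument can be run with $s$ strictly above the critical exponent $1$ for a two-dimensional domain — this is fine since the argument gives all $s\in(0,2)$ and $(1,2)$ is nonempty. A secondary technical point is the periodicity/boundary structure of $\omega=(0,L)\times(0,2\pi)$: one should interpret $C(\overline\omega)$ and the Sobolev embedding consistently with the periodic-in-$\theta$, clamped-in-$z$ setting of $V_K$, but this does not affect the embedding constants. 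Everything else is routine interpolation and bookkeeping with the Lie-splitting increments, using only the uniform bounds already furnished by Theorem~\ref{thm:unif}.
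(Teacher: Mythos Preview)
Your proof is correct. Both your argument and the paper's reduce to the same two ingredients: the convergence $\bel_N\to\be$ in $L^\infty(0,T;H^s(\omega))$ for some $s\in(1,2)$, and the Sobolev embedding $H^s(\omega)\hookrightarrow C(\overline\omega)$. The difference lies in how you pass from $\bel_N$ to $\be_N$. The paper exploits that $\be_N(t)=\bel_N(n\dt)$ for $t\in((n-1)\dt,n\dt]$ and splits
\[
\|\be_N(t)-\be(t)\|_{H^s}\le \|\bel_N(n\dt)-\be(n\dt)\|_{H^s}+\|\be(n\dt)-\be(t)\|_{H^s},
\]
handling the first term by the $C([0,T];H^s)$ convergence of $\bel_N$ and the second by uniform continuity of the limit $\be$ on the compact interval $[0,T]$. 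You instead bound $\|\be_N-\bel_N\|_{L^\infty(0,T;H^s)}$ directly via the interpolation inequality and the identity $\be_N^n-\be_N^{n-1}=\dt\,\bv_N^{n-1/2}$, obtaining the explicit rate $C\dt^{1-s/2}$. Your route is slightly more quantitative and avoids invoking continuity of the limit; the paper's is a touch shorter since it bypasses the interpolation step. Both are entirely standard and equally valid.
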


\begin{proof}
	First, we prove that $\be_N\to\be\text{ in }L^{\infty}(0,T;H^s(\omega)),$ for $0<s<2.$
	That result follows from the continuity in time of $\be,$ and from the fact that $\bel_N\to\be$ in $C([0,T];H^s(\omega)),$ for $0<s<2.$ Namely, we write
	\begin{align*}
	\|\be_N(t)-\be(t)\|_{H^s(\omega)}&=\|\be_N(t)-\be(n\dt)+\be(n\dt)-\be(t)\|_{H^s(\omega)}\\
	&= \|\be_N(n\dt)-\be(n\dt)+\be(n\dt)-\be(t)\|_{H^s(\omega)}\\
	&\leq \|\be_N(n\dt)-\be(n\dt)\|_{H^s(\omega)}+ \|\be(n\dt)-\be(t)\|_{H^s(\omega)}\\
	&=\|\bel_N(n\dt)-\be(n\dt)\|_{H^s(\omega)}+ \|\be(n\dt)-\be(t)\|_{H^s(\omega)}\\
	&<\varepsilon.
	\end{align*}
	Here, we used the fact that for $t\in ((n-1)\dt,n\dt]$ the following holds: $\bel_N(n\dt)=\be_N(n\dt)=\be_N(t).$
	The proof follows by observing that for $s>1$ we have $H^s(\omega)\hookrightarrow C(\overline{\omega})$.
\end{proof}

The spatial regularity of the sequence $(\be_N)_{N\in\N}$ obtained in the previous proposition will not be sufficient to pass to the limit.
Since we are working in 3D, the result above does not guarantee the uniform Lipschitz property of the structure displacements, 
which is used at several places to obtain the final existence result. 
This is why we need to make an additional assumption, related to the bi-Lipschitz property of structure displacements:
\vskip 0.1in
\noindent
\begin{assumption}[Regularity of approximate structure displacements]\label{LipschitzAssumption}
There exists a constant $C >0$, independent of $N$, such that the structure displacements $(\be_N)_{N\in\N}$ satisfy:
\begin{equation}\label{Lipschitz}
\|\be_N\|_{C([0,T];W^{1,\infty}(\omega))}\leq C.
\end{equation}
\end{assumption}
\vskip 0.1in

\begin{remark} This assumption is not necessary for structures with higher regularity, such as those studied in \cite{Boulakia2005} and \cite{multipolar}.
For FSI problems in 3D, the Koiter shell allowing both tangential and transverse displacements is just short
of the $H^{2+\varepsilon}$ regularity, necessary for the uniform Lipschitz property.
\end{remark}

The assumption above implies that $(\be_N)_{N\in \N}$ is a sequence of uniformly Lipschitz functions. In particular, we have that $\|\be_N\|_{W^{1,\infty}(\omega)}\leq C$. By applying the same procedure as in the proof of Theorem~5.5-1 from \cite{CiarletVol1}, we can prove that the mappings $\id + \be_N$ are injective. This implies the injectivity of function $\bg$ defined in \eqref{inject} and ensures that the reparameterizations $\bet_N$ are well defined. Therefore, we now have that the sequence $\id + \be_N$ is a sequence of injective, uniformly Lipschitz functions. Furthermore, from $\|\be_N\|_{W^{1,\infty}(\omega)}\leq C,$ we have that the gradient of $\id + \be_N$ is bounded from below, which implies that the gradient of $(\id + \be_N)^{-1}$ is bounded from above, i.e. $(\id + \be_N)^{-1}$ is a sequence of uniformly Lipschitz functions. Finally, we have that the sequence $\id + \be_N$ is a sequence of uniformly bi-Lipschitz functions.

Using the previous conclusions and the definition of the reparameterized shell displacement, we can easily prove the following proposition:
\begin{prop}\label{bi-Lipschitz}
	The sequence $(\bet_N)_{N\in\N}$ is a sequence of uniformly bi-Lipschitz functions, i.e. sequences $(\bet_N)$ and $(\bet_N^{-1})$ are bounded in $C([0,T];W^{1,\infty}(\omega)).$
\end{prop}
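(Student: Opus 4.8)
The plan is to reduce the statement to two facts already in hand: that $\id+\be_N$ is uniformly bi-Lipschitz, and that the reparameterization \eqref{reparam}--\eqref{eta_repara} exhibits $\bet_N$ as a composition of such maps. Write $\be_N=(\eta_{z,N},\eta_{r,N},\eta_{\theta,N})$ and let $\bg_N(t,\cdot)=\id+(\eta_{z,N},\eta_{\theta,N})(t,\cdot)$ be the horizontal change of variables \eqref{inject}; then \eqref{reparam} gives the pointwise identity $\et_N=\eta_{r,N}\circ\bg_N(t,\cdot)^{-1}$ on $\omega$, and $\bet_N=\et_N\mathbf{e}_r$. Since $(\eta_{z,N},\eta_{\theta,N})$ is a sub-block of $\be_N$, Assumption~\ref{LipschitzAssumption} shows it is uniformly bounded in $C([0,T];W^{1,\infty}(\omega))$, so the maps $\bg_N(t,\cdot)$ are Lipschitz with a constant independent of $N$ and $t$; they are injective by the discussion preceding the statement, and, by the clamped and periodic boundary conditions in \eqref{FSIeqBC}, each maps $\omega$ onto $\omega$.

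First I would upgrade this to the uniform bi-Lipschitz property of $\bg_N$, arguing exactly as was done above for $\id+\be_N$: the uniform Lipschitz bound together with injectivity and the procedure of Theorem~5.5-1 in \cite{CiarletVol1} produces a lower bound on $|\det\nabla\bg_N(t,\cdot)|$ independent of $N$ and $t$ (equivalently, $\nabla\bg_N(t,\cdot)$ remains in a fixed compact subset of the invertible matrices), whence $\bg_N(t,\cdot)^{-1}$ is Lipschitz with a constant independent of $N$ and $t$. Thus $(\bg_N)$ and $(\bg_N^{-1})$ are both bounded in $C([0,T];W^{1,\infty}(\omega))$.

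The remaining steps are bookkeeping. As $\et_N=\eta_{r,N}\circ\bg_N^{-1}$ is a composition of uniformly Lipschitz maps and $\|\eta_{r,N}\|_{C([0,T];L^\infty(\omega))}\le C$, the chain rule gives $\|\et_N\|_{C([0,T];W^{1,\infty}(\omega))}\le C$, and since $|\mathbf{e}_r|\equiv 1$ and $|\partial_\theta\mathbf{e}_r|\equiv 1$ the same bound holds for $\bet_N=\et_N\mathbf{e}_r$. For the inverse, note that the reparameterized boundary deformation $\id+\bet_N$ and the original $\id+\be_N$ parameterize the same surface $\Gamma^{\bet_N}(t)=\Gamma^{\be_N}(t)$, and by construction are related, modulo the Cartesian--cylindrical identification, through $\id+\bet_N=(\id+\be_N)\circ\bg_N(t,\cdot)^{-1}$; being a composition of uniformly bi-Lipschitz maps, $\id+\bet_N$ is uniformly bi-Lipschitz, so its inverse is uniformly bounded in $C([0,T];W^{1,\infty}(\omega))$. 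In particular all of $\bg_N$, $\bg_N^{-1}$, $\bet_N$ and $(\id+\bet_N)^{-1}$ are bounded in $C([0,T];W^{1,\infty}(\omega))$, which is the claimed uniform bi-Lipschitz property (whichever of these ``$\bet_N^{-1}$'' refers to).

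The only step carrying genuine content is the uniform lower bound on $|\det\nabla\bg_N|$: the obstacle is that Assumption~\ref{LipschitzAssumption} controls $\bg_N$ only in $W^{1,\infty}(\omega)$, a norm that need not be small, so the bound cannot come from a perturbation-of-the-identity estimate and must instead be extracted, uniformly in $N$ and $t$, from the quantitative (degree-theoretic) injectivity argument already used for $\id+\be_N$. Everything downstream -- the chain rule, the composition of Lipschitz maps, and the identity $\id+\bet_N=(\id+\be_N)\circ\bg_N^{-1}$ -- is routine.
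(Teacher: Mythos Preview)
Your proposal is correct and follows exactly the route the paper sketches: the paper's ``proof'' consists only of the sentence ``Using the previous conclusions and the definition of the reparameterized shell displacement, we can easily prove the following proposition,'' so you are simply supplying the details the authors omit --- namely, that $\bg_N$ inherits the uniform bi-Lipschitz property from $\id+\be_N$ via the Ciarlet-type argument, and that $\bet_N$ and $(\id+\bet_N)^{-1}$ then arise as compositions of uniformly bi-Lipschitz maps. Your honest flagging of the lower bound on $|\det\nabla\bg_N|$ as the only substantive step, and of the ambiguity in what ``$\bet_N^{-1}$'' denotes, is apt; the paper is equally terse on both points.
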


\subsection{Convergence of the gradients}
In order to be able to pass to the limit in the weak formulation of the fluid-mesh-shell interaction problem, we need to show that the sequence of the gradients of fluid velocities converges weakly to the gradient of the limiting velocity.  From Theorem~\ref{thm:unif}, we know that the symmetrized gradients are 
uniformly bounded in the following way: 
\begin{equation*}
\sum_{n=0}^{N-1}\int_{\Omega^{n+1}}|\bD(\bu_N^{n+1})|^2\dt \leq K.
\end{equation*}
To show that the gradients of approximate sequences are uniformly bounded, we will use the classical Korn's inequality. 
However, since the Korn's constant depends on the fluid domain, to obtain the desired uniform estimate,
we will use the uniform bi-Lipschitz property from Proposition~\ref{bi-Lipschitz} to obtain the uniform boundedness of the gradients.
This implies weak convergence of the gradients, for which we then show converge to the gradient of the limiting velocity.

We start by introducing the following characteristic functions, which are defined on the maximal domain $\Omega_M$:
\begin{align}\label{char_fun}
\chi_N (t,\mathbf{x})=\begin{cases}
1, \quad t\in (n\dt,(n+1)\dt],\;\mathbf{x}\in\Omega^{n+1},\\
0,\quad \text{ otherwise},
\end{cases}
\end{align}
\begin{align*}
\chi (t,\mathbf{x})=\begin{cases}
1, \quad t\in (0,T],\;\mathbf{x}\in\Omega^{\bet}(t),\\
0,\quad \text{ otherwise}.
\end{cases}
\end{align*}
Here, $\bet$ is the weak* limit of $\bet_N$ in $L^\infty(0,T;W^{1,\infty}(\omega)).$
Next, we show that $\chi_N\nabla\bup_N$ are uniformly bounded:
\begin{align*}
\int_0^T \|\chi_N\nabla\bup_N\|_{L^2(\Omega_M)}^2
&=\sum_{n=0}^{N-1} \|\nabla \bu_N^{n+1}\|_{L^2(\Omega^{n+1})}^2\dt 
\leq C(\Omega^{n+1})\sum_{n=0}^{N-1} \|\bD(\bu_N^{n+1})\|_{L^2(\Omega^{n+1})}^2\dt,
\end{align*}
where $C(\Omega^{n+1})$ is a constant from Korn's inequality, which depends on $\Omega^{n+1},\; n = 0,\dots, N-1.$ Since the approximate reparameterized shell displacements are uniformly bi-Lipschitz functions, from Lemma 1 in \cite{Velcic} we obtain the existence of a universal Korn's constant $D$ such that
\begin{equation*}
C(\Omega^{n+1})\leq D, \quad, n=0,\dots,N-1.
\end{equation*}
Finally, 
\begin{equation*}
\int_{0}^{T}\|\chi_N \nabla \bup_N\|_{L^2(\Omega_M)}^2 \leq D\sum_{n=0}^{N-1} \|\bD(\bu_N^{n+1})\|_{L^2(\Omega^{n+1})}^2 \dt\leq DK,
\end{equation*}
which implies that the sequence $(\chi_N\nabla\bup_N)_{N\in\N}$ is uniformly bounded in $L^2(0,T;L^2(\Omega_M)).$ 
Therefore, there exists a subsequence, which we denote the same way, and a function $\bG \in L^2(0,T;L^2(\Omega_M))$ such that
\begin{equation*}
\chi_N\nabla\bup_N\rightharpoonup \bG \text{ weakly in } L^2(0,T;L^2(\Omega_M)),
\end{equation*}
i.e. 
\begin{equation*}
\lim_{N\to\infty}\int_0^T\int_{\Omega_M}\chi_N\nabla\bup_N\cdot\bups = \int_0^T\int_{\Omega_M} \bG\cdot\bups,\quad \forall\bups\in C_c^{\infty}((0,T)\times\Omega_M).
\end{equation*}
We want to show that $\bG$ equals $\chi\nabla \bup.$ 
In order to do that, we first consider the set $\Omega_M\setminus\Omega^{\bet}(t)$ and show that $\bG=0$ there.
Then, we consider the set $\Omega^{\bet}(t)$ and show that $\bG=\nabla\bu$ there.

Let $\bups_1$ be a test function such that $\text{supp}\,\bups_1\subset (0,T)\times(\Omega_M\setminus\Omega^{\bet}(t)).$ Using the uniform convergence of the sequence $\bet_N,$ we can find an $N_1$ such that $\chi_N(\mathbf{x})=\chi(\mathbf{x})=0,N\geq N_1,\mathbf{x}\in \text{supp}\,\bups_1.$
Therefore, we have
\begin{equation*}
\int_0^T\int_{\Omega_M} \bG\cdot\bups_1=\lim_{N\to\infty}\int_0^T\int_{\Omega_M}\chi_N\nabla\bup_N\cdot\bups_1=0.
\end{equation*}
Thus, $\bG=0$ on $(0,T)\times (\Omega_M\setminus\Omega^{\bet}(t)).$

Let $\bups_2$ be a test function such that $\text{supp}\,\bups_2\subset (0,T)\times \Omega^{\bet}(t).$ Using the uniform convergence of the sequence $\bet_N,$ we can find 
an $N_2$ such that $\chi_N(\mathbf{x})=\chi(\mathbf{x})=1,N\geq N_2,\mathbf{x}\in \text{supp}\, \bups_2.$ Therefore, we have
\begin{equation*}
\int_0^T\int_{\Omega_M} \bG\cdot\bups_2=\lim_{N\to\infty}\int_0^T\int_{\Omega_M}\chi_N\nabla\bup_N\cdot\bups_2 = \int_0^T\int_{\Omega^{\bet}(t)}\nabla\bu\cdot \bups_2.
\end{equation*}
Thus, $\bG=\nabla\bu$ on $(0,T)\times \Omega^{\bet}(t).$
This shows that
\begin{equation}\label{grad_conv}
\chi_N\nabla\bup_N\rightharpoonup\chi\nabla\bup\quad\text{in}\quad L^2(0,T;L^2(\Omega_M)),
\end{equation}
which proves the weak convergence of the gradients of $\bup_N$ to the gradient of the limiting function $\bup$.

\subsection{Strong convergence of the fluid and structure velocities}
One of the main accomplishments of this work is the ability to show strong convergence of the fluid and structure velocities
to the weak solution of the coupled, nonlinear FSI problem. 
Crucial for proving this result is a recently published generalization of the Aubin-Lions-Simon compactness lemma to problems on moving domains \cite{BorSunCompact}.
For completeness, we state this result, and show that the assumptions of the  theorem 
hold true for our FSI problem,
thereby implying the strong convergence of the fluid and structure velocities.

\begin{theorem}[\cite{BorSunCompact}] \label{thm:compact}
	Let $V$ and $H$ be Hilbert spaces such that $V\subset\subset H.$ Suppose that $\{\bu_N\}\subset L^2(0,T;H)$ is a sequence such that $\bu_N(t,\cdot)=\bu_N^n(\cdot)$ on $((n-1)\dt,n\dt],$ $n=1,\dots,N$ with $N\dt = T.$
	Let $V_N^n$ and $Q_N^n$ be Hilbert spaces such that $(V_N^n,Q_N^n)\hookrightarrow V\times V,$ where the embeddings are uniformly continuous with respect to $N$ and $n$, and $V_N^n\subset\subset \overline{Q_N^n}^H\hookrightarrow(Q_N^n)'.$ Let $\bu_N^n\in V_N^n,\; n =1,\dots,N.$ If the following is true:
	\begin{enumerate}[(A)]
		\item There exists a universal constant $C>0$ such that for every $N$
		\begin{itemize}
			\item[(A1)]$\sum_{n=1}^N\|\bu_N^n\|_{V_N^n}^2\dt\leq C,$
			\item[(A2)]$\|\bu_N\|_{L^\infty(0,T;H)}\leq C,$
			\item[(A3)]$\|\tau_{\dt}\bu_N-\bu_N\|_{L^2(\dt,T;H)}\leq C\dt,$
		\end{itemize}
		where $\tau_{\dt}\bu_N(t,\cdot)=\bu_N(t-\dt,\cdot)$ denotes the time-shift.
		\item There exists a universal constant $C>0$ such that
		\begin{equation*}
		\|P_N^{n+1}\frac{\bu_N^{n+1}-\bu_N^n}{\dt}\|_{(Q_N^{n+1})'}\leq C(\|\bu_N^{n}\|_{V_N^{n}}+1),\; n= 0,\dots,N-1,
		\end{equation*}
		where $P_N^{n+1}$ is the orthogonal projector onto $\overline{Q_N^{n+1}}^H.$
		\item The function spaces $Q_N^n$ and $V_N^n$ depend smoothly on time in the following sense:
		\begin{itemize}
			\item[(C1)] For every $N\in\N$, and for every $l\in\{1,\dots,N\}$ and $n\in\{1,\dots,N-l\}$, there exists a space $Q_N^{n,l}\subset V$ and the operators $J_{N,l,n}^i : Q_N^{n,l}\to Q_N^{n+i},\; i = 0,\dots,l,$ such that for every $\bq\in Q_N^{n,l}$
			\begin{equation}\label{C1_first}
			\|J_{N,l,n}^i\bq\|_{Q_N^{n+i}}\leq C\|\bq\|_{Q_N^{n,l}},\quad i\in\{0,\dots,l\},
			\end{equation} 
			and
			\begin{equation}\label{C1_second}
			\left((J_{N,l,n}^{j+1}\bq - J_{N,l,n}^j\bq),\bu_N^{n+j+1}\right)_H\leq C\dt \|\bq\|_{Q_N^{n,l}}\|\bu_N^{n+j+1}\|_{V_N^{n+j+1}},\quad j\in\{0,\dots,l-1\},
			\end{equation}
			\begin{equation}\label{C1_third}
			\|J_{N,l,n}^i\bq-\bq\|_H\leq C\sqrt{l\dt}\|\bq\|_{Q_N^{n,l}},\quad i\in\{0,\dots,l\},
			\end{equation}
			where $C>0$ is independent of $N,n$ and $l.$
			\item[(C2)] Let $V_N^{n,l}=\overline{Q_N^{n,l}}^V.$ There exist the functions $I_{N,l,n}^i:V_N^{n+1}\to V_N^{n,l},\; i = 0,\dots,l,$ and a universal constant $C>0$ such that for every  $\bv\in V_N^{n+i}$
			\begin{equation}\label{C2_first}
			\|I_{N,l,n}^i\bv\|_{V_N^{n,l}}\leq C\|\bv\|_{V_N^{n+i}},\quad i\in\{0,\dots,l\},
			\end{equation}
			\begin{equation}\label{C2_second}
			\|I_{N,l,n}^i\bv-\bv\|_H\leq g(l\dt)\|\bv\|_{V_N^{n+i}},\quad i \in\{0,\dots,l\},
			\end{equation}
			where $g:\R^+\to\R^+$ is a universal, monotonically increasing function such that $g(h)\to 0$ as $h \to 0.$
			\item[(C3)]\text{Uniform Ehrling property.} For every $\delta>0$ there exists a constant $C(\delta)>0$ independent of $N,l$ and $n$ such that
			\begin{equation}\label{ehrling}
			\|\bv\|_H\leq \delta\|\bv\|_{V_N^{n,l}}+C(\delta)\|\bv\|_{(Q_N^{n,l})'};
			\end{equation} 
		\end{itemize}
		then $\{\bu_N\}$ is relatively compact in $L^2(0,T;H).$
		
	\end{enumerate}
	
\end{theorem}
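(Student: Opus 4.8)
The plan is to obtain relative compactness in $L^2(0,T;H)$ from the Aubin--Lions--Simon / Fr\'echet--Kolmogorov criterion adapted to piecewise constant functions: it suffices to verify that $\{\bu_N\}$ is bounded in $L^2(0,T;H)$, that the sequence is tight in the space variable because $V\subset\subset H$, and that the time translations are uniformly small, i.e. $\sup_N\|\tau_{\dt}^{\,}\bu_N-\bu_N\|$-type quantities vanish as the shift tends to $0$. The $L^\infty(0,T;H)$, hence $L^2(0,T;H)$, bound is (A2). For the spatial ingredient, combining (A1) with the uniform embeddings $V_N^n\hookrightarrow V$ shows that, viewing each slice $\bu_N^n\in V_N^n$ as an element of $V$, the sequence is bounded in $L^2(0,T;V)$, and $V\subset\subset H$ then supplies the required compactness. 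Everything therefore reduces to a uniform-in-$N$ time-translation estimate.

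First I would reduce to shifts that are multiples of $\dt$. For a fixed $h>0$ and $N$ large enough that $\dt<h$, set $l=\lfloor h/\dt\rfloor\ge 1$, so $l\dt\le h<(l+1)\dt$; since $\bu_N$ is piecewise constant, controlling the $L^2$-norm of $\bu_N(\cdot-h)-\bu_N(\cdot)$ amounts to bounding $\sum_n\|\bu_N^{n+l}-\bu_N^n\|_H^2\,\dt$ (together with the analogous sum with $l+1$, coming from the fractional remainder $h-l\dt$). The finitely many coarse $N$ with $\dt\ge h$ are harmless for relative compactness and are handled separately, in the spirit of Dreher--J\"ungel's compactness criterion for piecewise constant families. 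Thus the goal becomes: there is a function $\omega$ with $\omega(h)\to 0$ as $h\to 0$, independent of $N$, such that
\begin{equation*}
\sum_{n}\|\bu_N^{n+l}-\bu_N^n\|_H^2\,\dt\ \le\ \omega(h).
\end{equation*}

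On a fixed block $\{n,\dots,n+l\}$ I would move into the common spaces $Q_N^{n,l}\subset V_N^{n,l}=\overline{Q_N^{n,l}}^{V}$ from (C1)--(C2). Put $\bw_i:=I_{N,l,n}^i\bu_N^{n+i}\in V_N^{n,l}$; by \eqref{C2_first}--\eqref{C2_second}, $\|\bw_i\|_{V_N^{n,l}}\le C\|\bu_N^{n+i}\|_{V_N^{n+i}}$ and $\|\bw_i-\bu_N^{n+i}\|_H\le g(l\dt)\|\bu_N^{n+i}\|_{V_N^{n+i}}$, so it suffices to estimate $\|\bw_l-\bw_0\|_H$. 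As $\bw_l-\bw_0\in V_N^{n,l}$, the uniform Ehrling property \eqref{ehrling} gives, for any $\delta>0$,
\begin{equation*}
\|\bw_l-\bw_0\|_H\ \le\ \delta\,\|\bw_l-\bw_0\|_{V_N^{n,l}}+C(\delta)\,\|\bw_l-\bw_0\|_{(Q_N^{n,l})'} .
\end{equation*}
The first term squares and sums (over $n$, against $\dt$) to $\le C\delta^2$ by \eqref{C2_first} and (A1). For the dual norm I would test against $\bq\in Q_N^{n,l}$, $\|\bq\|_{Q_N^{n,l}}\le 1$, introduce the interpolated test functions $J_{N,l,n}^i\bq\in Q_N^{n+i}$, and telescope:
\begin{equation*}
(\bu_N^{n+l},J_{N,l,n}^l\bq)_H-(\bu_N^{n},J_{N,l,n}^0\bq)_H=\sum_{j=0}^{l-1}\Big[(\bu_N^{n+j+1}-\bu_N^{n+j},\,J_{N,l,n}^{j+1}\bq)_H+(\bu_N^{n+j},\,J_{N,l,n}^{j+1}\bq-J_{N,l,n}^j\bq)_H\Big].
\end{equation*}
The second summand is $O(\dt)\,\|\bq\|_{Q_N^{n,l}}\|\bu_N^{n+j}\|_{V_N^{n+j}}$ by \eqref{C1_second}; in the first summand, using $P_N^{n+j+1}J_{N,l,n}^{j+1}\bq=J_{N,l,n}^{j+1}\bq$ followed by (B) and \eqref{C1_first}, it is $O(\dt)\,(\|\bu_N^{n+j}\|_{V_N^{n+j}}+1)\|\bq\|_{Q_N^{n,l}}$. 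Summing in $j$ with Cauchy--Schwarz, and correcting $(\bu_N^{n+l},J_{N,l,n}^l\bq)_H$ against $(\bw_l,\bq)_H$ (and likewise at index $0$) via \eqref{C1_third} and \eqref{C2_second}, one arrives at
\begin{equation*}
\|\bw_l-\bw_0\|_{(Q_N^{n,l})'}\ \le\ C\dt\sqrt{l}\Big(\sum_{j=0}^{l}\|\bu_N^{n+j}\|_{V_N^{n+j}}^2\Big)^{1/2}+Cl\dt+\big(g(l\dt)+C\sqrt{l\dt}\big)\big(\|\bu_N^{n+l}\|_{V_N^{n+l}}+\|\bu_N^{n}\|_{V_N^{n}}\big).
\end{equation*}
Squaring, summing $\sum_n(\cdot)\,\dt$, and using $l\dt\le h$ together with (A1), every contribution is $\le C\,(h+h^2+g(h)^2)$, which tends to $0$ as $h\to 0$ uniformly in $N$. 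Choosing $\delta$ small first and then $h$ small yields the time-translation estimate, hence the claimed relative compactness.

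The main obstacle is exactly this dual-norm control of the discrete time derivative across a block. Because the slices $\bu_N^{n+i}$ genuinely live in different Hilbert spaces $V_N^{n+i}$ — the moving-domain feature — one cannot integrate $\tfrac{\bu_N^{n+1}-\bu_N^n}{\dt}$ inside a single fixed dual space; the purpose of (C1)--(C3) is precisely to furnish transfer operators $J_{N,l,n}^i$, $I_{N,l,n}^i$ that are uniformly bounded and are ``almost the identity in $H$'' with errors degrading only like $\sqrt{l\dt}$. Threading the telescoping sum through these operators while keeping every constant independent of $N,n,l$, and ensuring that the $\sqrt{l\dt}$-errors from \eqref{C1_third}/\eqref{C2_second} stay summable after squaring, is the delicate part; the rest — the $h$ versus $\dt$ reduction, the fractional remainder, and the finitely many coarse $N$ — is routine within the piecewise-constant Aubin--Lions machinery.
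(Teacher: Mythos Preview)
The paper does not prove this theorem at all: it is quoted from \cite{BorSunCompact} and then \emph{applied} (in Theorem~\ref{thm:strong}) to the concrete FSI sequences; there is no proof in the present paper to compare your argument against.

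For what it is worth, your sketch is essentially the strategy carried out in \cite{BorSunCompact}: reduce relative compactness in $L^2(0,T;H)$ to a uniform time-translation estimate, pass to integer shifts $l\dt$ using the piecewise-constant structure, transport each $\bu_N^{n+i}$ into the common space $V_N^{n,l}$ via $I_{N,l,n}^i$, invoke the uniform Ehrling inequality \eqref{ehrling} to split into a $V_N^{n,l}$-part (controlled by (A1) and \eqref{C2_first}) and a $(Q_N^{n,l})'$-part, and bound the latter by telescoping against $J_{N,l,n}^i\bq$ using (B) together with \eqref{C1_first}--\eqref{C1_third}. One small slip: in your telescoping the cross term is $(\bu_N^{n+j},J_{N,l,n}^{j+1}\bq-J_{N,l,n}^j\bq)_H$, whereas \eqref{C1_second} is stated with $\bu_N^{n+j+1}$; this index mismatch is harmless (shift the splitting, or absorb the discrepancy using (A3) and \eqref{C1_third}), but you should align it with the hypothesis as written.
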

Assumptions (A) and (B) correspond to the classical assumptions of the Aubin-Lions compactness lemma. Assumptions (C) are new 
in the sense that they apply to
problems on moving domains,
as they describe the smooth dependence of the fluid domains on time, needed for the compactness argument. 

To apply Theorem \ref{thm:compact} we start by defining the overarching functions spaces $V$ and $H$ from the theorem,
which must be such that $V \subset\subset H$:
\begin{equation*}
\begin{array}{rl}
V&=H^s(\Omega_M)\times H^s(\omega)\times L^2(\mathcal{N}) \times L^2(\mathcal{N}),\\
H&=L^2(\Omega_M)\times L^2(\omega)\times H^{-s}(\mathcal{N})\times H^{-s}(\mathcal{N}),
\end{array}
\quad 0<s<1/2,
\end{equation*}
where we recall that $\Omega_M$ is the maximal fluid domain containing all the time-dependent fluid domains.
\begin{remark}
	We assume that all the functions defined on the time-dependent fluid domains are extended by $0$ to $\Omega_M.$
\end{remark}

Furthermore, we define the Hilbert spaces $V_N^n$ and $Q_N^n$
such that $(V_N^n,Q_N^n)\hookrightarrow V\times V,$ where the embeddings are uniformly continuous with respect to $N$ and $n$, and 
$V_N^n\subset\subset \overline{Q_N^n}^H\hookrightarrow(Q_N^n)'$:
\begin{equation}
V_N^n=\{(\bu,\bv,\bk,\bz)\in V_F^n\times H^{1/2}(\omega)\times L^2(\mathcal{N})\times L^2(\mathcal{N}): (\bu\circ\bphi^n)|_{\Gamma}\circ\bvarphi = \bv\},
\end{equation}
\begin{equation}\label{test_space}
Q_N^n=\{ (\bups,\bpsi,\bxi,\bzeta)\in (V_F^n\cap H^5(\Omega^n))\times V_K\times V_S \times V_S: (\bups\circ\bphi^n)|_{\Gamma}\circ\bvarphi=\bpsi,\bpsi\circ\bpi=\bxi\}.
\end{equation}
These correspond to the approximation solution and test spaces, respectively. 
\begin{remark}
Notice that the test space \eqref{test_space} is slightly stronger than necessary for the weak formulation, since it is intersected with $H^5$.  This simplifies certain estimates, presented below, and it does not change the final result, since the stronger test space is 
dense in the natural test space of weak solutions. 
\end{remark}

The weak formulation of the coupled, semi-discretized problem, obtained by adding the weak formulation for the semi-discretized structure subproblem \eqref{sub_struct},
 and the weak formulation for the semi-discretized fluid subproblem \eqref{sub_fluid}, reads:
\begin{align}\label{discrete_weak}
\begin{split}
&\dis \rho_F \int_{\Omega^{n+1}}\frac{\bu^{n+1}-\buh^{n}}{\dt}\cdot \bups  + \frac{\rho_F}{2}\int_{\Omega^{n+1}}(\nabla\cdot\bs)(\buh^n\cdot\bups)\\
&+ \frac{\rho_F}{2}\int_{\Omega^{n+1}}\big[((\buh^n-\bs)\cdot\nabla)\bu^{n+1}\cdot \bups- ((\buh^n-\bs)\cdot\nabla)\bups\cdot \bu^{n+1} \big]\\
&+ 2\mu_F \int_{\Omega^{n+1}}\bD(\bu^{n+1}):\bD(\bups)
+\rho_K h \int_{\omega}\frac{\bv^{n+1}-\bv^n}{\dt}\cdot\bpsi R+a_K(\be^{n+1},\bpsi)\\
&+\rho_S\sum_{i=1}^{n_E}A_i\int_{0}^{l_i}\frac{\bk_i^{n+1}-\bk_i^n}{\dt}\cdot\bxi_i+\rho_S\sum_{i=1}^{n_E}\int_{0}^{l_i}M_i\frac{\bz_i^{n+1}-\bz_i^n}{\dt}\cdot\bzeta_i
+a_S(\bw^{n+1},\bzeta)\\
&=\dis P_{in}^n\int_{\Gamma_{in}}\upsilon_z - P_{out}^n\int_{\Gamma_{out}}\upsilon_z, \quad \forall (\bups,\bpsi,\bxi,\bzeta) \in  Q_N^n.
\end{split}
\end{align}
\begin{theorem}\label{thm:strong}
	Let $\{(\bu_N,\bv_N,\bk_N,\bz_N)\}$ be a sequence of approximate solutions defined by piecewise constant extensions \eqref{approx_sol}
	of approximate solutions satisfying the weak formulation \eqref{discrete_weak} and uniform energy estimates from Theorem~\ref{thm:unif}. Then $\{(\bu_N,\bv_N,\bk_N,\bz_N)\}$ is relatively compact in $L^2(0,T;H).$
\end{theorem}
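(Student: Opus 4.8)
The plan is to verify, one at a time, the hypotheses of the generalized Aubin--Lions--Simon compactness result, Theorem~\ref{thm:compact}, for the sequence $\{(\bu_N,\bv_N,\bk_N,\bz_N)\}$ of \eqref{approx_sol}, with the overarching spaces $V$, $H$, the discrete spaces $V_N^n$, $Q_N^n$ as introduced above, and the discrete weak formulation \eqref{discrete_weak}. The estimates (A1)--(A3) are of classical Aubin--Lions type and come from the uniform energy bounds of Theorem~\ref{thm:unif}. For (A1), $\sum_n\|\bu_N^n\|_{V_N^n}^2\dt\le C$: the $H^1(\Omega^n)$-part of the fluid velocity is controlled by Statement~2 (the dissipation bound on $\bD(\bu_N^n)$) and Statement~1 (the $L^2$-bound) through Korn's inequality \emph{with a universal constant} --- precisely here the uniform bi-Lipschitz property of Proposition~\ref{bi-Lipschitz} and Lemma~1 of \cite{Velcic} are indispensable; the $H^{1/2}(\omega)$-part of $\bv_N^n$ follows by the trace inequality, with a uniform trace constant by the same property; the $L^2(\mathcal N)$-parts of $\bk_N^n,\bz_N^n$ are Statement~1. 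Estimate (A2), $\|(\bu_N,\bv_N,\bk_N,\bz_N)\|_{L^\infty(0,T;H)}\le C$, is immediate from Statement~1 (fluid and shell components, extended by zero to $\Omega_M$ and $\omega$) together with the $L^\infty(0,T;L^2(\mathcal N))$-bounds and $L^2(\mathcal N)\hookrightarrow H^{-s}(\mathcal N)$ (mesh components). Estimate (A3), the time-translation bound, comes from the numerical dissipation of Statement~3 and the near-identity property of the discrete ALE maps \eqref{ale_disc}: the shell and mesh velocity increments $\bv_N^{n+1}-\bv_N^n$, $\bk_N^{n+1}-\bk_N^n$, $\bz_N^{n+1}-\bz_N^n$ are square-summable in $N$ directly, and the fluid increment $\bu_N^{n+1}-\bu_N^n$ (extended to $\Omega_M$) is the numerically dissipated piece $\bu_N^{n+1}-(1-\Delta\eta^{n+1,n})\buh^n$ plus remainders governed by $\Delta\eta^{n+1,n}=O(\dt)$ and by the displacement of the ALE maps over one step.

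Assumption (B) --- the bound on $P_N^{n+1}(\bu_N^{n+1}-\bu_N^n)/\dt$ in $(Q_N^{n+1})'$ by $C(\|\bu_N^n\|_{V_N^n}+1)$ --- is verified by pairing the difference quotient with an arbitrary $(\bups,\bpsi,\bxi,\bzeta)\in Q_N^{n+1}$ and using \eqref{discrete_weak} (up to the discrepancy between integrating over $\Omega^{n+1}$ and over $\Omega^n$, absorbed via the ALE Jacobian as in the energy identity) to rewrite this pairing as minus the sum of the viscous term $2\mu_F\int_{\Omega^{n+1}}\bD(\bu^{n+1}):\bD(\bups)$, the skew-symmetrized ALE advection, the $(\nabla\cdot\bs)$-term, the elastic forms $a_K(\be^{n+1},\bpsi)$ and $a_S(\bw^{n+1},\bzeta)$, and the inlet/outlet pressure forcing; each of these is bounded by $C(\|\bu_N^n\|_{V_N^n}+1)\,\|(\bups,\bpsi,\bxi,\bzeta)\|_{Q_N^{n+1}}$ using Theorem~\ref{thm:unif}, Cauchy--Schwarz, the trace inequality, and the pressure data bound --- the only term that requires the extra $H^5(\Omega^{n+1})$-regularity built into $Q_N^{n+1}$ being the advection $\int_{\Omega^{n+1}}((\buh^n-\bs)\cdot\nabla)\bups\cdot\bu^{n+1}$, which is estimated through $\|\nabla\bups\|_{L^\infty}$.

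The genuinely new content, and the main obstacle, is the verification of the ``smooth-in-time dependence'' hypotheses (C1)--(C3). For a block of indices $n,\dots,n+l$ one takes $Q_N^{n,l}$ to be the space of divergence-free, $H^5$-regular test tuples on the oldest configuration $\Omega^n$, coupled to the shell and mesh test functions exactly as in \eqref{test_space}, and defines the transfer operators $J_{N,l,n}^i$ by composition with the explicit ALE maps of Section~\ref{sec:ale}: composition preserves the divergence-free constraint up to the ALE Jacobian (treated as in the energy computation), and the uniform bi-Lipschitz bounds of Proposition~\ref{bi-Lipschitz} make the operator norms uniform, which gives \eqref{C1_first}. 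For \eqref{C1_second}, one writes $J_{N,l,n}^{j+1}\bq-J_{N,l,n}^j\bq$ as the time-integral over a single step of the $t$-derivative of the ALE-transported field, which involves the ALE domain velocity $\bsc$; since $\bsc$ is uniformly bounded in $L^\infty(0,T;L^2)$ (a consequence of the uniform bound on the structure velocities and Proposition~\ref{bi-Lipschitz}), this difference is $O(\dt)$ in $L^2(\Omega_M)$, and pairing with $\bu_N^{n+j+1}$ and telescoping over the block gives \eqref{C1_second} and \eqref{C1_third} (using $l\dt\le T$). The operators $I_{N,l,n}^i$ of (C2) are built symmetrically --- composition with the inverse ALE maps and, on the part of $\Omega^n$ not covered by $\Omega^{n+i}$, with a divergence-free extension operator of norm uniform in $N$, whose existence is exactly where the uniform Lipschitz Assumption~\ref{LipschitzAssumption} is crucial (this is the incompressibility issue noted in the introduction) --- so that \eqref{C2_first}--\eqref{C2_second} hold with the modulus $g$ controlled by $\|\et^{n+i}-\et^n\|_{C(\overline{\omega})}$, which tends to zero with $l\dt$ by interpolating the uniform $L^\infty(0,T;W^{1,\infty}(\omega))$-bound on $\bel_N$ against its $W^{1,\infty}(0,T;L^2(\omega))$-bound. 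Finally (C3) is the Ehrling lemma for the compact embedding $H^s(\Omega_M)\times H^s(\omega)\times L^2(\mathcal N)\times L^2(\mathcal N)\subset\subset L^2(\Omega_M)\times L^2(\omega)\times H^{-s}(\mathcal N)\times H^{-s}(\mathcal N)$, with constant uniform in $N,n,l$ because all $\Omega^n$ are uniformly bi-Lipschitz images of the reference cylinder. With (A), (B) and (C) in hand, Theorem~\ref{thm:compact} yields the relative compactness of $\{(\bu_N,\bv_N,\bk_N,\bz_N)\}$ in $L^2(0,T;H)$. The hard part is (C): one must produce transfer operators between the moving discrete fluid domains that are simultaneously uniformly bounded, $O(\dt)$-close to the identity in $L^2$, and compatible with the divergence-free constraint --- it is there that the explicit ALE maps and the uniform (bi-)Lipschitz assumptions do the essential work, the presence of all three displacement components being exactly what makes this step delicate.
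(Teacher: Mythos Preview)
Your overall strategy and your handling of (A) and (B) are essentially right (in the paper, (A3) is actually deduced from (B) via \cite[Thm.~3.2]{BorSunCompact}, and the $H^5$ regularity in $Q_N^{n+1}$ is used to bound $\|\bups\|_{L^\infty}$ in the advection and in the thin-layer estimate for $\buh_N^n-\bu_N^n$, rather than $\|\nabla\bups\|_{L^\infty}$). The genuine gap is in your construction for (C1). You take $Q_N^{n,l}$ to live on the \emph{oldest} domain $\Omega^n$ and build $J_{N,l,n}^i$ by composition with the ALE maps; but ALE composition does \emph{not} preserve the divergence-free constraint, so your $J_{N,l,n}^i\bq$ is not in $Q_N^{n+i}$ as required. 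The phrase ``treated as in the energy computation'' does not fix this: in the energy computation one works with an ALE-adapted weak form and never needs to produce an element of a divergence-free space. A Piola transform would restore divergence-freeness but destroy the boundary coupling $(\bups\circ\bphi^{n+i})|_\Gamma\circ\bvarphi=\bpsi$, and a Bogowskii correction would have to be shown to be $O(\dt)$ in $H$ while preserving $H^5$ and the trace---none of which you address. The paper sidesteps all of this: it puts $Q_N^{n,l}$ on a \emph{local maximal} domain $\Omega^{n,l}\supset\bigcup_{i=0}^l\Omega^{n+i}$ and defines $J_{N,l,n}^i$ as \emph{restriction}, $(\bups,\bpsi,\bxi,\bzeta)\mapsto(\bups|_{\Omega^{n+i}},\bups|_{\Gamma^{n+i}}\circ\bvarphi,(\bups|_{\Gamma^{n+i}}\circ\bvarphi)|_{\mathcal N},\bzeta)$. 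Restriction trivially preserves divergence-freeness and $H^5$; the shell test function is simply the trace on the new boundary, so the coupling is automatic; and \eqref{C1_second}--\eqref{C1_third} follow from thin-layer and mean-value estimates together with the interpolation \eqref{interp}. For (C2) the paper goes the other way---extending $\bu_N^{n+i}$ divergence-free to $\Omega^{n,l}$ via the Bogowskii-type Lemma~\ref{lemma:div}---which is closer in spirit to what you sketch.

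Your one-line argument for (C3) is also insufficient: the Ehrling inequality is needed for the \emph{constrained} spaces $V_N^{n,l}$ and $(Q_N^{n,l})'$, which vary with $N,n,l$ and carry the divergence-free and coupling conditions, and uniform bi-Lipschitz equivalence of the domains alone does not obviously give a uniform constant there. The paper proves (C3) by a contradiction argument, using the uniform convergence of $\bet_n$ to a limit $\bet^\star$ and a careful choice of test functions in $Q_n$ to force any normalised sequence with $\|\cdot\|_{Q_n'}\to0$ to converge to zero in $H$.
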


\begin{notation}
Without loss of generality, to simplify notation, 
 throughout the rest of this section we will be assuming that all the physical constants are equal 1.
\end{notation}

\begin{proof}
	We show that  (A)-(C) from Theorem~\ref{thm:compact} are satisfied.
	\\[3mm]
	\noindent
	\textbf{Property A.} We need to show that there exists a universal constant $C>0$ such that for every $N,$ the estimates (A1)-(A3) hold. 

\vskip 0.1in
\noindent
		{\bf{(A1) The $L^2(0,T;V_N^n)$ estimate:}}
		\begin{align*}
		\sum_{n=1}^N\|(\bu_N^n,\bv_N^n,\bk_N^n,\bz_N^n)\|_{V_N^n}^2\dt = 
		\sum_{n=1}^N &\left(\|\bu_N^n\|_{H^1(\Omega^n)}^2 
		+ \|\bv_N^n\|_{H^{1/2}(\omega)}^2
		+ \|\bk_N^n\|_{L^2(\mathcal{N})}^2
		+ \|\bz_N^n\|_{L^2(\mathcal{N})}^2\right)\dt.
		\end{align*}
		The approximate fluid and mesh velocities are uniformly bounded due to the energy estimates from 
		Statement 1 of Theorem~\ref{thm:unif}. For the shell velocity, by the trace theorem, we have
		\begin{equation}
		\|\bv_N^n\|_{H^{1/2}(\omega)}^2 \leq C \|\bu_N^n\|_{H^1(\Omega^n)}^2,
		\end{equation}
		and the right-hand side is again bounded due to the uniform energy estimates provided by Theorem~\ref{thm:unif}.
\vskip 0.1in
\noindent		
		{\bf{(A2) The $L^\infty(0,T;H)$ estimate:}}
		\begin{align*}
		&\|(\bu_N,\bv_N,\bk_N,\bz_N)\|_{L^\infty(0,T;H)}\\
		&= \|\bu_N\|_{L^\infty(0,T;L^2(\Omega_M))}
		+ \|\bv_N\|_{L^\infty(0,T;L^2(\omega))}
		+ \|\bk_N\|_{L^\infty(0,T;H^{-s}(\mathcal{N}))}
		+ \|\bz_N\|_{L^\infty(0,T;H^{-s}(\mathcal{N}))}\\
		&=\max_{n=1,\dots,N}\left(\|\bu_N^n\|_{L^2(\Omega^n)}
		+\|\bv_N^n\|_{L^2(\omega)}
		+\|\bk_N^n\|_{H^{-s}(\mathcal{N})}
		+\|\bz_N^n\|_{H^{-s}(\mathcal{N})}
		\right)\\
		&\leq \max_{n=1,\dots,N}\left(\|\bu_N^n\|_{L^2(\Omega^n)}
		+\|\bv_N^n\|_{L^2(\omega)}
		+\|\bk_N^n\|_{L^2(\mathcal{N})}
		+\|\bz_N^n\|_{L^2(\mathcal{N})}
		\right).
		\end{align*}
		The uniform bounds of the right-hand side follow from Statement 1 of Theorem~\ref{thm:unif}.

	This completes the proof of Property A, since condition (A3) follows from Property B, as proved in Theorem 3.2. in \cite{BorSunCompact}.
	\\[3mm]
	\noindent
	\textbf{Property B.} We need to obtain a uniform bound of the time derivative in the following weak norm:
	\begin{align*}
	&\left\|P_N^{n+1}\frac{(\bu_N^{n+1},\bv_N^{n+1},\bk_N^{n+1},\bz_N^{n+1})-(\bu_N^n,\bv_N^n,\bk_N^n,\bz_N^n)}{\dt}\right\|_{(Q_N^{n+1})'}\\
	&=\sup_{\|(\bups,\bpsi,\bxi,\bzeta)\|_{Q_N^{n+1}}=1}\left| \int_{\Omega^{n+1}}\frac{\bu_N^{n+1}-\bu_N^{n}}{\dt}
	\cdot\bups +\int_{\omega}\frac{\bv_N^{n+1}-\bv_N^n}{\dt}\cdot\bpsi\right.\\
	&\quad\left.+ \sum_{i=1}^{n_E}\int_{0}^{l_i}\frac{(\bk_N^{n+1})_i- (\bk_N^{n})_i}{\dt}\cdot\bxi_i+  \sum_{i=1}^{n_E}\int_{0}^{l_i}\frac{(\bz_N^{n+1})_i- (\bz_N^{n})_i}{\dt}\cdot\bzeta_i \right|.
	\end{align*}
	We start by adding and subtracting the function $\buh_N^n$ which is defined in \eqref{ale_comp_fun}:
	\begin{align*}
	&\left| \int_{\Omega^{n+1}}\frac{\bu_N^{n+1}-\bu_N^{n}\pm \buh_N^n}{\dt} \cdot\bups +\int_{\omega}\frac{\bv_N^{n+1}-\bv_N^n}{\dt}\cdot\bpsi\right.\\
	&\quad+\left. \sum_{i=1}^{n_E}\int_{0}^{l_i}\frac{(\bk_N^{n+1})_i- (\bk_N^{n})_i}{\dt}\cdot\bxi_i+ \sum_{i=1}^{n_E}\int_{0}^{l_i}\frac{(\bz_N^{n+1})_i- (\bz_N^{n})_i}{\dt}\cdot\bzeta_i\right|\\
	&\leq \left| \int_{\Omega^{n+1}}\frac{\bu_N^{n+1}-\buh_N^{n}}{\dt} \cdot\bups +\int_{\omega}\frac{\bv_N^{n+1}-\bv_N^n}{\dt}\cdot\bpsi\right.\\  
	&\quad+\left. \sum_{i=1}^{n_E}\int_{0}^{l_i}\frac{(\bk_N^{n+1})_i- (\bk_N^{n})_i}{\dt}\cdot\bxi_i  + \sum_{i=1}^{n_E}\int_{0}^{l_i}\frac{(\bz_N^{n+1})_i- (\bz_N^{n})_i}{\dt}\cdot\bzeta_i \right| + \left|\int_{\Omega^{n+1}}\frac{\buh_N^n-\bu_N^n}{\dt}\cdot\bups  \right|.
	\end{align*}
	We rewrite the first term by using the weak formulation \eqref{discrete_weak} to obtain the following estimate:
	\begin{align*}
	&\left| \int_{\Omega^{n+1}}\frac{\bu_N^{n+1}-\buh_N^{n}}{\dt} \cdot\bups +\int_{\omega}\frac{\bv_N^{n+1}-\bv_N^n}{\dt}\cdot\bpsi\right.\\
	&\quad+\left. \sum_{i=1}^{n_E}\int_{0}^{l_i}\frac{(\bk_N^{n+1})_i- (\bk_N^{n})_i}{\dt}\cdot\bxi_i +  \sum_{i=1}^{n_E}\int_{0}^{l_i}\frac{(\bz_N^{n+1})_i- (\bz_N^{n})_i}{\dt}\cdot\bzeta_i \right|\\
	&\leq C_1 \|\bv_N^{n+1/2}\|_{L^2}\|\buh_N^n\|_{L^2}\|\bups\|_{L^\infty} + C_2\|\nabla\bu_N^{n+1}\|_{L^2}\|\buh_N^n\|_{L^2}\|\|\bups\|_{L^\infty} + C_3\|\nabla\bu_N^{n+1}\|_{L^2}\|\nabla\bups\|_{L^2}\\
	&\quad+ C_4\|\be\|_{H^2}\|\bpsi\|_{H^2} + C_5 \|\partial_s\bw\|_{L^2}\|\partial_s\bzeta\|_{L^2} + C_6\|\bups\|_{L^\infty}\\
	&\leq C (\|(\bu_N^{n+1},\bv_N^{n+1},\bk_N^{n+1},,\bk_N^{n+1})\|_{V_N^{n+1}}+1)\|(\bups,\bpsi,\bxi,\bzeta)\|_{Q_N^{n+1}}.
	\end{align*}
	
	\noindent
	To estimate the second term, we first notice that function $\buh_N^n$ is $0$ outside domain $\Omega^{n+1},$ while function $\bu_N^n$ is $0$ outside domain $\Omega^n$. 
	See Fig.~\ref{fig:inter} for an example of the mutual position of domains $\Omega^n$ and $\Omega^{n+1}$. To simplify the estimate we introduce
	$A=\Omega^{n+1}\cap\Omega^n, B_1=\Omega^{n+1}\backslash\Omega^{n}$ and $B_2=\Omega^n\setminus\Omega^{n+1},$ and estimate the integrals over $A,B_1$ and $B_2$ separately. 
	
	\begin{figure}[t]
		\centering
		\includegraphics[width=0.7\linewidth]{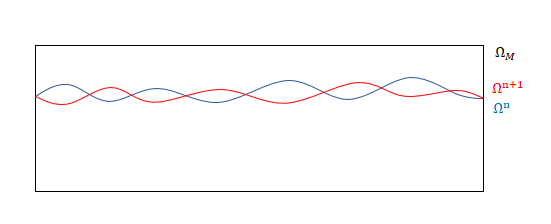}	
		\caption{The 2D fluid domains at time steps $t_n$ and $t_{n+1}$}
		\label{fig:inter}
	\end{figure}
	
	\noindent
	First we start with the integral over $A$, i.e. over the area of the intersection of two consecutive domains $\Omega^{n}$ and $\Omega^{n+1}:$
	\begin{align*}
	\left|\int_{A}(\buh_N^n-\bu_N^n)\cdot\bups\right|&=\left|\int_A(\bu_N^n\circ A^{n+1,n}-\bu_N^n)\cdot\bups\right|\\
	&=\left|\int_A\left( \bu_N^n(z,\frac{R+\et_N^n(z,\theta)}{R+\et_N^{n+1}(z,\theta)}r,\theta)- \bu_N^n(z,r,\theta)   \right)\cdot \bups(z,r,\theta)\; dzdrd\theta \right|
	\end{align*}
	By using the mean value theorem and H\"older's inequality we get:
	\begin{align*}
	\left|\int_{A}(\buh_N^n-\bu_N^n)\cdot\bups\right|
	&\leq C \|\nabla\bu_N^n\cdot(\et_N^n-\et_N^{n+1})\mathbf{e}_r\|_{L^1(A)}\|\bups\|_{L^\infty(A)}\\
	&\leq C\dt\|\nabla\bu_N^n\|_{L^2(A)}\|\bv_N^{n+1/2}\|_{L^2(A)}\|\bups\|_{L^\infty(A)}
	\leq C\|\bu_N^n\|_{H^1(A)}\|\bups\|_{H^5(A)}\\
	&\leq C\|(\bu_N^n,\bv_N^n,\bk_N^n,\bz_N^n)\|_{V_N^n}\|(\bups,\bpsi,\bxi,\bzeta)\|_{Q_N^n}.
	\end{align*}
	Notice how the higher regularity of the test space $Q_N^n$, commented in the Remark just below the definition of $Q_N^n$ in \eqref{test_space}, provided the upper bound for the $L^\infty$-norm of the test function $\bups.$
	
	To estimate the integral over $B_1,$ we use the fact that $\bu_N^n=0$ on $B_1$ to obtain:
	\begin{align*}
	\left|\int_{B_1}(\buh_N^n-\bu_N^n)\cdot\bups\right|
	&=\left| \int_{B_1} \buh_N^n(z,r,\theta)\cdot \bups(z,r,\theta)\;dzdrd\theta\right|\\
	&=\left|  \int_{\omega}\left(\int_{R+\et_N^{n}}^{R+\et_N^{n+1}}\buh_N^n(z,r,\theta)\cdot \bups(z,r,\theta)\;dr\right) dzd\theta \right|\\
	&\leq\left|\int_{\omega}\max_r (\buh_N^n(z,r,\theta)\cdot\bups(z,r,\theta))\int_{R+\et_N^{n}}^{R+\et_N^{n+1}}\; dr dz d\theta  \right|\\
	&\leq C \int_{\omega}\|\partial_r u_r^n(z,\cdot,\theta)\|_{L_r^2}\|\bups\|_{L^\infty} \|\dt\bv_N^{n+1/2}\|_{L^2}\;dzd\theta\\
	&\leq C\dt \|\nabla\bu_N^n\|_{L^2}\|\bups\|_{L^\infty}\\
	&\leq C\|(\bu_N^n,\bv_N^n,\bk_N^n,\bz_N^n)\|_{V_N^n}\|(\bups,\bpsi,\bxi,\bzeta)\|_{Q_N^n}.
	\end{align*}
	The integral over $B_2$ can be estimated in the same way as the integral over $B_1$ by using the fact that $\buh_N^n=0$ on $B_2.$ These estimates, together with the estimate obtained from the weak formulation, complete the proof of Property B, i.e. we have
	\begin{equation*}
	\left\|P_N^{n+1}\frac{(\bu_N^{n+1},\bv_N^{n+1},\bk_N^{n+1},\bz_N^{n+1})-(\bu_N^n,\bv_N^n,\bk_N^n,\bz_N^n)}{\dt}\right\|_{(Q_N^{n+1})'}\leq C \left(\|(\bu_N^n,\bv_N^n,\bk_N^n,\bz_N^n)\|_{V_N^n}+1\right).
	\end{equation*}
	\\[3mm]
	\noindent
	\textbf{Property C.} This property investigates smooth dependence of the test and solution spaces on time,
	namely, on the change of the fluid domain. Property C relies on being able to construct a ``common'' test space, 
	and a ``common'' solution space for all the time-shifts by $i \Delta t$, with $i = 0,\dots,l$, which are ``close'' in the relevant topologies to
	the original test and solutions spaces for these time-shift, as described by properties (C1)-(C3). The common functions spaces are based 
	on the existence of a ``local'' maximal domain $\Omega^{n,l}$ which contains all the fluid domains $\Omega^{n+i},i=0,\dots,l:$
	\begin{equation}
	\Omega^{n,l}=\{(z,r,\theta):z\in(0,L),r\leq R + \et_N^{n,l}(z,\theta),\theta\in(0,2\pi)  \},
	\end{equation}
	where $\et_N^{n,l}(z,\theta)=\underset{i=0,\dots,l}{\max}\et_N^{n+i}(z,\theta),$ mollified if necessary to get the smooth functions. 
	The existence of the local maximal domains is guaranteed by the uniform energy estimates, presented in Theorem~\ref{thm:unif}.
		\\[3mm]
	\noindent
	\textbf{Property C1.}
	A common test space is then defined in the following way:
	\begin{equation}\label{common_test_space}
	Q_N^{n,l}=\{(\bups,\bpsi,\bxi,\bzeta)\in (V_F(\Omega^{n,l})\cap H^5(\Omega^{n,l}))\times V_K\times V_S\times V_S: (\bups\circ\bphi^n)|_{\Gamma}\circ\bvarphi=\bpsi,\bpsi\circ\bpi=\bxi\}.
	\end{equation}
	For each $(\bups,\bpsi,\bxi,\bzeta)\in Q_N^{n,l}$, we define $J_{N,l,n}^i$ as the restriction:
	\begin{equation*}
	J_{N,l,n}^i (\bups,\bpsi,\bxi,\bzeta)=(\bups|_{\Omega^{n+i}},\bups|_{\Gamma^{n+i}}\circ\bvarphi,(\bups|_{\Gamma^{n+i}}\circ\bvarphi)|_{\mathcal{N}},\bzeta),
	\end{equation*}
	and set
	\begin{equation}
	(\bups^i,\bpsi^i,\bxi^i,\bzeta^i):= (\bups|_{\Omega^{n+i}},\bups|_{\Gamma^{n+i}}\circ\bvarphi,(\bups|_{\Gamma^{n+i}}\circ\bvarphi)|_{\mathcal{N}},\bzeta).
	\end{equation}
	The mappings $J_{N,l,n}^i $ satisfy all the properties from Theorem~\ref{thm:compact}. Indeed, 
	property \eqref{C1_first} follows directly from the definition of $J_{N,l,n}^i.$ To verify property \eqref{C1_second}, we need to calculate 
	\begin{equation}
	\left(J^{j+1}(\bups,\bpsi,\bxi,\bzeta)-J^{j}(\bups,\bpsi,\bxi,\bzeta),(\bu_N^{n+j+1},\bv_N^{n+j+1},\bk_N^{n+j+1},\bz_N^{n+j+1}) \right)_H.
	\end{equation}
	We estimate each term separately. The first term is estimated similar to Property B:
	\begin{align*}
	\left|\int_{\Omega_M}(\bups^{j+1}-\bups^j)\cdot \bu_N^{n+j+1}  \right|&=\left| \int_{\Omega^{n+j+1} \Delta \Omega^{n+j}}\bups \cdot \bu_N^{n+j+1} \right|\\
	&=\left|  \int_{\omega}\left(\int_{R+\et_N^{n+j}}^{R+\et_N^{n+j+1}}\bups(z,r,\theta)\cdot\bu_N^{n+j+1}(z,r,\theta)\;dr\right) dzd\theta \right|\\
	&\leq\left|\int_{\omega}\max_r (\bups(z,r,\theta)\cdot\bu_N^{n+j+1}(z,r,\theta))\int_{R+\et_N^{n+j}}^{R+\et_N^{n+j+1}}\; dr dz d\theta  \right|\\
	&\leq C \int_{\omega}\|\bups\|_{L^\infty} \|\partial_r u_r^{n+j+1}(z,\cdot,\theta)\|_{L_r^2} \|\dt\bv_N^{n+j+1/2}\|_{L^2}\;dzd\theta\\
	&\leq C \|\bups\|_{L^\infty}\|\nabla\bu_N^{n+j+1}\|_{L^2}\|\dt\bv_N^{n+j+1/2}\|_{L^2}\\
	&\leq C\dt \|(\bups,\bpsi,\bxi,\bzeta)\|_{Q_N^{n,l}}\|(\bu_N^{n+j+1},\bv_N^{n+j+1},\bk_N^{n+j+1},\bz_N^{n+j+1})\|_{V_N^{n+j+1}}.
	\end{align*}
	Before estimating the second term, we note that
	\begin{equation*}
	\bpsi^i=\bups|_{\Gamma^{n+i}}\circ\bvarphi=(\bups\circ\bphi^{n+i})|_{\Gamma}\circ\bvarphi.
	\end{equation*}
	Then, by using the mean value theorem, we get
	\begin{align}\label{pom1}
	\begin{split}
	\left| \int_{\omega} (\bpsi^{j+1}-\bpsi^j)\cdot \bv_N^{n+j+1}\right|&\leq \int_{\omega}|(\bpsi^{j+1}-\bpsi^j)\cdot \bv_N^{n+j+1}|\leq\|\bpsi^{j+1}-\bpsi^j\|_{L^2(\omega)}\|\bv_N^{n+j+1}\|_{L^2(\omega)}\\
	&=\|(\bups(\bphi^{n+j+1}(\cdot))-\bups(\bphi^{n+j}(\cdot)))|_{\Gamma}\circ\bvarphi\|_{L^2(\omega)}\|\bv_N^{n+j+1}\|_{L^2(\omega)}\\
	&\leq \|\nabla \bups\|_{L^\infty(\Omega^{n,l})}\|(\bphi^{n+j+1}-\bphi^{n+j})|_{\Gamma}\circ\bvarphi\|_{L^2(\omega)}\|\bv_N^{n+j+1}\|_{L^2(\omega)}.
	\end{split}
	\end{align}
	Recall that $\bphi^{i}|_{\Gamma}\circ\bvarphi=\id + \be_N^i,$ so we can further estimate the right-hand side to obtain
	\begin{align}\label{pom2}
	\begin{split}
	\left| \int_{\omega} (\bpsi^{j+1}-\bpsi^j)\cdot \bv_N^{n+j+1}\right| &\leq \|\nabla\bups\|_{L^\infty(\Omega^{n,l})}\|\be_N^{n+j+1}-\be_N^{n+j}\|_{L^2(\omega)}\|\bv_N^{n+j+1}\|_{L^2(\omega)}\\
	&= \|\nabla\bups\|_{L^\infty(\Omega^{n,l})}\dt\|\bv_N^{n+j+1/2}\|_{L^2(\omega)}\|\bv_N^{n+j+1}\|_{L^2(\omega)}\\
	&\leq C\dt \|(\bups,\bpsi,\bxi,\bzeta)\|_{Q_N^{n,l}}\|(\bu_N^{n+j+1},\bv_N^{n+j+1},\bk_N^{n+j+1},\bz_N^{n+j+1})\|_{V_N^{n+j+1}}.
	\end{split}
	\end{align}
	What is left is to take care of the term $\dis\left|\sum_{n=1}^{n_E}\int_0^{l_i}(\bxi_i^{j+1}-\bxi_i^j)\cdot\bk_i^{n+j+1}  \right|.$ Recall that
	\begin{equation*}
	\bxi^i=(\bups|_{\Gamma^{n+i}}\circ\bvarphi)|_{\mathcal{N}}=\bpsi^i|_{\mathcal{N}}=\bpsi^i\circ\bpi.
	\end{equation*}
	Thus
	\begin{align*}
	\dis\left|\sum_{n=1}^{n_E} \right. &\left. \dis\int_0^{l_i}(\bxi_i^{j+1}-\bxi_i^j)\cdot\bk_i^{n+j+1}  \right| \leq \|\bxi^{j+1}-\bxi^j\|_{L^2(\mathcal{N})}\|\bk_N^{n+j+1}\|_{L^2(\mathcal{N})}\\
	&=\|(\bpsi^{j+1}-\bpsi^j)|_{\mathcal{N}}\|_{L^2(\mathcal{N})}\|\bk_N^{n+j+1}\|_{L^2(\mathcal{N})}
	\leq C \|\bpsi^{j+1}-\bpsi^j\|_{L^2(\omega)}\|\bk_N^{n+j+1}\|_{L^2(\mathcal{N})}\\
	&\leq C \dt \|(\bups,\bpsi,\bxi,\bzeta)\|_{Q_N^{n,l}}\|(\bu_N^{n+j+1},\bv_N^{n+j+1},\bk_N^{n+j+1},\bz_N^{n+j+1})\|_{V_N^{n+j+1}},
	\end{align*}
	where in the last inequality we used the fact that $\|\bpsi^{j+1}-\bpsi^j\|_{L^2(\omega)}$ is bounded, which follows from \eqref{pom1} and \eqref{pom2}.
	At last, we have to check that property \eqref{C1_third} is valid, i.e.
	\begin{equation*}
	\|J_{N,l,n}^i(\bups,\bpsi,\bxi,\bzeta)-(\bups,\bpsi,\bxi,\bzeta)\|_{H}\leq C\sqrt{l\dt}\|(\bups,\bpsi,\bxi,\bzeta)\|_{Q_N^{n,l}}.
	\end{equation*}
	It is clear that $J_{N,l,n}^i(\bups,\bpsi,\bxi,\bzeta)$ and $(\bups,\bpsi,\bxi,\bzeta)$ differ only in the region $\Omega^{n,l}\backslash\Omega^{n+i},$ so the $H$-norm of the difference between the two functions can be bounded by the $Q_N^{n,l}$-norm of $(\bups,\bpsi,\bxi,\bzeta)$ and the $H^1$-norm of the difference $\bet_N^{n,l}-\bet_N^{n+i},i=0,\dots,l.$ 
	Since $\bet_N^{n,l}$ is the maximum of the finitely many functions $\bet_N^{n+i}, i=0,\dots,l$ 
	we calculate the $H^1$-norm of the difference $\bet_N^{n+i}-\bet_N^{n}$,
		by using the interpolation inequality:
	\begin{align}
\begin{split}\label{interp}
&\|\bet_N(t^n+i\dt)-\bet_N(t^n)\|_{H^{2\alpha}(\omega)} \\
&\leq C \|\bet_N(t^n+i\dt)-\bet_N(t^n)\|_{L^2(\omega)}^{1-\alpha} \|\bet_N(t^n+i\dt)-\bet_N(t^n)\|_{H^2(\omega)}^{\alpha}\\
&\leq C (l\dt)^{1-\alpha}, \text{ where } 0<\alpha<1,
\end{split}
\end{align}
with $\alpha = 1/2$, where we have used the uniform energy estimates, and the fact that
the upper   bound on $\|\bet_N^{n+i}-\bet_N^{n}\|_{H^1(\omega)}$ only depends on the width of the time interval, which is $l\dt,$ 
to get:
	\begin{equation*}
	\|\bet_N^{n+i}-\bet_N^{n}\|_{H^1(\omega)}\leq C\sqrt{l\dt},\; i = 1,\dots,l.
	\end{equation*}
	This completes the verification of Property C1.
	\\[3mm]
	\noindent
	\textbf{Property C2.} We define a common solution space $V_N^{n,l}$ to be the closure of $Q_N^{n,l}$ in $V$ (for $s=1/2$ ):
	\begin{align}\label{common_sol_space}
	\begin{split}
	V_N^{n,l}=\{(&\bu,\bv,\bk,\bz)\in H^{1/2}(\Omega^{n,l})\times H^{1/2}(\omega)\times L^2(\mathcal{N})\times L^2(\mathcal{N}):\\
	&\nabla\cdot \bu = 0,((\bu\circ\bphi^n)|_{\Gamma}\circ\bvarphi-\bv)\cdot \bn = 0  \}.
	\end{split}
	\end{align}
	To construct the mappings $I_{N,l,n}^i:V_N^{n+i}\to V_N^{n,l}$ possessing the approximation properties \eqref{C2_first} and \eqref{C2_second},
	we will need to be able to extend the functions $\bu_N^{n}\in V_F^{n}$ to a divergence-free function defined on the maximal domain $\Omega_M$.
	This can be done by using the following nontrivial result:
	\begin{lemma}\label{lemma:div}
		Let $\bu\in V_F^{n}$. Then there exists a divergence-free function $\bul\in V$ such that $\bul|_{\Omega^n}=\bu$ and
		\begin{equation}
		\|\bul\|_V \leq C \|\bu\|_{V_F^n},
		\end{equation}
		where $C$ is independent of $N$ and $n$.	
	\end{lemma}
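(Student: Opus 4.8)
The plan is to build $\bul$ in two moves: first extend $\bu$ to an $H^1$ function on the maximal cylinder $\Omega_M$ with norm bounded by $C\|\bu\|_{V_F^n}$, with $C$ independent of $N$ and $n$; then correct the (generally nonzero) divergence of this extension by a Bogovskii-type operator supported in the thin radial collar $\mathcal O_n:=\Omega_M\setminus\overline{\Omega^n}$, whose operator norm is again uniform. The uniformity in both moves is exactly what the uniform bi-Lipschitz property of the reparameterized displacements (Proposition~\ref{bi-Lipschitz}) supplies, and it is the heart of the statement. As a preliminary, the uniform energy estimates of Theorem~\ref{thm:unif}, the embedding $H^2(\omega)\hookrightarrow C(\overline\omega)$, and the standing geometric assumption that the fluid domains remain non-degenerate subgraphs up to time $T$, yield constants $0<c_0\le R_0<R_{max}$, independent of $N$ and $n$, with $c_0\le R+\et_N^n(z,\theta)\le R_0$ on $\overline\omega$. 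Hence each $\Omega^n=\{r<R+\et_N^n(z,\theta)\}$ is a uniformly Lipschitz domain contained in $\{r<R_0\}$, and the collar $\mathcal O_n=\{R+\et_N^n(z,\theta)<r<R_{max}\}$ has width at least $R_{max}-R_0>0$, uniformly.

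\emph{Step 1: uniform $H^1$ extension.} Since the lateral face $\{r=R+\et_N^n\}$ is the graph of a function that is uniformly Lipschitz in $(z,\theta)$ by Proposition~\ref{bi-Lipschitz}, the Lipschitz character of $\Omega^n$ is uniform, and there is a Sobolev extension operator $E_n:H^1(\Omega^n)\to H^1(\R^3)$ with $\|E_n\|\le C$ independent of $N,n$ (local flattening by bi-Lipschitz charts and reflection; the constant depends only on the Lipschitz character and hence not on $N,n$). Fixing a cut-off $\chi=\chi(r)$ with $\chi\equiv1$ on $\{r\le R_0\}$ and $\chi\equiv0$ on $\{r\ge\frac12(R_0+R_{max})\}$, and setting $\mathbf U:=\chi\,(E_n\bu)|_{\Omega_M}$, we get $\mathbf U\in H^1(\Omega_M)$ with $\mathbf U=\bu$ on $\Omega^n$ (since $\chi\equiv1$ there and $\Omega^n\subset\{r<R_0\}$), $\mathbf U\equiv0$ on $\{r\ge\frac12(R_0+R_{max})\}$, and $\|\mathbf U\|_{H^1(\Omega_M)}\le C\|\bu\|_{V_F^n}$.

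\emph{Step 2: divergence correction.} Put $g:=\nabla\cdot\mathbf U\in L^2(\Omega_M)$; then $g\equiv0$ on $\Omega^n$ (because $\nabla\cdot\bu=0$) and on $\{r\ge\frac12(R_0+R_{max})\}$, so $\text{supp}\,g\subset\mathcal O_n$, and $m:=\int_{\mathcal O_n}g$ obeys $|m|\le|\Omega_M|^{1/2}\|g\|_{L^2}\le C\|\bu\|_{V_F^n}$. Fix once and for all an axisymmetric smooth bump $\varrho_\star=\varrho_\star(z,r)\ge0$ with $\int\varrho_\star=1$ supported in the fixed annulus $\{R_0<r<R_{max}\}$, which lies in every $\mathcal O_n$, and the smooth radial field $\mathbf C_0$, supported in that annulus, defined by one-dimensional radial integration so that $\nabla\cdot\mathbf C_0=m\varrho_\star$; it vanishes on $\{r\le R_0\}\supset\Omega^n$ and satisfies $\|\mathbf C_0\|_{H^1(\Omega_M)}\le C|m|$ with $C$ depending only on $R_0,R_{max},L$. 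Since $g-m\varrho_\star$ has zero mean over $\mathcal O_n$, the Bogovskii operator on $\mathcal O_n$ yields $\mathbf W\in H^1_0(\mathcal O_n;\R^3)$ with $\nabla\cdot\mathbf W=g-m\varrho_\star$ and $\|\mathbf W\|_{H^1_0(\mathcal O_n)}\le C_{\mathcal O_n}\|g\|_{L^2(\mathcal O_n)}$. The crucial point is that $C_{\mathcal O_n}$ is uniform in $N,n$: $\mathcal O_n$ is covered by a fixed number of subdomains, each star-shaped with respect to a ball whose radius is bounded below uniformly (using the uniform Lipschitz constant of $\et_N^n$ and the uniform lower bound on the collar width), and for such domains the Bogovskii constant depends only on this geometric data. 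Extending $\mathbf W$ by zero to $\Omega_M$ and setting $\bul:=\mathbf U-\mathbf C_0-\mathbf W$, we obtain $\bul\in H^1(\Omega_M)$, $\nabla\cdot\bul=g-m\varrho_\star-(g-m\varrho_\star)=0$, $\bul|_{\Omega^n}=\bu$ (as $\mathbf C_0$ and $\mathbf W$ vanish on $\Omega^n$), and $\|\bul\|_V\le\|\bul\|_{H^1(\Omega_M)}\le C\|\bu\|_{V_F^n}$ with $C$ independent of $N,n$, which is the claim. One could alternatively transport $\bu$ to the fixed reference cylinder by a uniformly bi-Lipschitz radial map $\Omega_M\to\Omega_M$ sending $\Omega$ onto $\Omega^n$, extend divergence-freely on the fixed cylinder, and push forward by the contravariant Piola transform; this route needs $\et_N^n$ mollified to gain uniform $W^{2,\infty}$ bounds, at the cost of slightly inflating $\Omega^n$.

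\emph{The main obstacle} is precisely this uniformity, in $N$ and $n$, of the extension constant in Step 1 and the Bogovskii constant in Step 2: it is what Assumption~\ref{LipschitzAssumption} and Proposition~\ref{bi-Lipschitz} are designed to provide, and it is the step that genuinely breaks down at Koiter regularity once tangential shell displacements are present — the sense in which, as remarked after Assumption~\ref{LipschitzAssumption}, the uniform Lipschitz condition is ``intimately related to working with incompressible fluids.'' The remaining care points are routine: the lower bound $c_0>0$ keeps every Lipschitz character uniform by preventing $\Omega^n$ (or the collar) from pinching, and the zero-mean defect $m=\int_{\mathcal O_n}g$ obstructing a direct application of the Bogovskii operator is absorbed, cheaply and uniformly, by the fixed corrector $\mathbf C_0$.
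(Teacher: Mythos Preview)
Your proof is correct and follows essentially the same two–step strategy as the paper: extend $\bu$ to $H^1(\Omega_M)$ with a uniform constant, then correct the divergence on the collar $\Omega_M\setminus\Omega^n$ by a Bogovskii-type operator whose norm is uniform thanks to a star-shaped decomposition with bounds depending only on the uniform Lipschitz constant from Assumption~\ref{LipschitzAssumption}. The only notable difference is cosmetic: for the extension, the paper pulls $\bu$ back to the fixed reference cylinder $\Omega$ via $\bphi^n$, applies a \emph{single} fixed Sobolev extension there, and pushes back via $(\bphi^n)^{-1}$ and its extension $(\tilde\bphi^n)^{-1}$, whereas you invoke the uniform extension theorem for uniformly Lipschitz domains directly on $\Omega^n$ (the alternative Piola route you sketch at the end is closer to what the paper actually does, though the paper does not use the Piola transform). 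One small slip: your radial primitive $\mathbf C_0$ with $\nabla\cdot\mathbf C_0=m\varrho_\star$ cannot literally be ``supported in that annulus'' when $m\neq0$ (divergence theorem), but this is harmless since all you actually use is that $\mathbf C_0$ vanishes on $\{r\le R_0\}\supset\Omega^n$ and lies in $H^1(\Omega_M)$ with norm $\le C|m|$.
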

	
	\begin{proof} To construct a divergence-free extension onto $\Omega_M$ of the fluid velocity $\bu\in V_F^{n}$ defined on $\Omega^n$,
	we take the following approach. First we ``straighten" the moving boundary by mapping the moving domain $\Omega^n$ onto the fixed, reference domain $\Omega$
	using the mapping $\bphi^n$, which is the mapping defined in \eqref{phi} via \eqref{phi_mapping} ($\bphi^n$ is an injective, orientation preserving mapping
		which maps $\Omega$ onto $\Omega^n$, $\bphi^n(\Omega)=\Omega^n$).
        Once we have a straight boundary, we can easily extend the velocity to some maximal domain $\Omega_M$
        \cite{adams}. 
	Because of the uniform Lipschitz property 
	of our approximate moving domains, and thus $\bphi^n$, we can construct such an extension so that the $H^1$ norm is ``preserved''. 
	However, since we are interested in working with velocity extensions in the physical space, we map everything back onto the physical space in the following way:
	we use $(\bphi^n)^{-1}$  to map back the part that came from $\Omega^n$ (thereby obtaining the original $\bu$ there), but, to 
	map to the physical domain the extension from $\Omega$ to $\Omega_M$, we use a mapping $(\tilde{\bphi}^n)^{-1}$, which is defined to be an 
	{\emph{extension}} of $(\bphi^n)^{-1}$ onto $\Omega_M\setminus\Omega^n$. Because of the uniform bi-Lipschitz property of the approximate domains,
	the extension $(\bphi^n)^{-1}$ can be constructed in the physical space so that it also preserves the  $H^1$ norm of the velocity. 
	Namely, the final, resulting extension mapping is uniformly continuous in $N$ and $n$,
	which is what we wanted.
	Unfortunately, the extension defined this way is not necessarily divergence-free. For this reason we must construct a ``correction'' 
	giving rise to an extension 
	that is divergence-free. The construction and existence of such an extension rests on Theorem III.3.1 from \cite{GaldiNS}.

	More precisely, we start by constructing an {\bf{extension}} $\tilde{\bu}$ to $\Omega_M$ as follows.
		Take $\bu\in V_F^{n}$  and set 
		\begin{equation}
		\bu_R=\bu\circ\bphi^n.
		\end{equation}
				The function $\bu_R$ is defined on the reference domain $\Omega$, and the following estimate holds:
		\begin{equation*}
		\|\bu_R\|_{H^1(\Omega)}\leq\|\bu\|_{H^1(\Omega^n)}\|\bphi^n\|_{W^{1,\infty}(\Omega)}\leq C \|\bu\|_{H^1(\Omega^n)}.
		\end{equation*}
		Now we {\emph{extend}} $\bu_R$ to $\R^3$ and then define $\bup_R$ as a restriction of that particular extension to $\Omega_M.$ It is clear that the following bound holds:
		\begin{equation*}
		\|\bup_R\|_{H^1(\Omega_M)}\leq C\|\bu\|_{H^1(\Omega^n)}.
		\end{equation*}
		Since we are interested in constructing an extension of the fluid velocity (which respects the $H^1$ norm and is divergence-free)
		in physical space, namely from $\Omega^n$ to $\Omega_M,$ 
		we map everything back to the physical space by defining the extended function $\bup$ in the following way:
		\begin{equation*}
		\bup =
		\left\{ 
		\begin{array}{l}
		\bup_R\circ (\bphi^n)^{-1}\text{ in }\Omega^n,\\
		\bup_R\circ (\tilde{\bphi}^n)^{-1}\text{ in }\Omega_M\setminus\Omega^n,
		\end{array}
		\right.
		\end{equation*}
		where $\tilde{\bphi}^n: \Omega_M\setminus\Omega \to \Omega_M\setminus\Omega^n$ is 
		an {\emph{extension of the mapping $\bphi^n =\bphi^{\bet}(n\dt,\cdot)$}} to the maximal domain $\Omega_M.$ One can easily check that, due to the
		bi-Lipschitz property stated in Proposition~\ref{bi-Lipschitz}, the following uniform bound holds:
		\begin{equation*}
		\|\bup\|_{H^1(\Omega_M)}\leq C \|\bu\|_{H^1(\Omega^n)}.
		\end{equation*}
		
		Unfortunately, $\bup$ is not {\bf{divergence-free}} so we need to ``correct" it. 
		The correction is designed with the help of 
	        Theorem III.3.1 from \cite{GaldiNS}, which deals with the problem of finding  a vector field $\bv\in W_0^{1,p}(\Omega)$ such that
		\begin{equation}\label{galdi}
		\nabla\cdot\bv = \bff \ {\rm in} \ \Omega,
		\end{equation}
		where $\bff \in L^p(\Omega)$ is such that 
		\begin{equation}\label{fint}
		\int_{\Omega}\bff=0,
		\end{equation}
		 and $\Omega$ has certain regularity properties, which we discuss below.
		
	        We look for the velocity ``correction'' which consists of two parts: $\bu_c$ and $\bv$, so that the velocity $\bup$, corrected by 
	        $\bu_c + \bv$ is divergence-free, and has all the desired properties. The part $\bu_c$ is introduced so that 
	        condition \eqref{fint} can be satisfied when \eqref{galdi} is solved for $\bv$, where $\bv$ is such that
	        \begin{equation}\label{fintnew}
	        \nabla \cdot \bv = - \nabla \cdot \left(\bup + \bu_c\right)\ {\rm in} \ \Omega_M\setminus \Omega^n.
	        \end{equation}
	        The resulting corrected fluid velocity $\bup + \bu_c + \bv$ is divergence-free.
	         More precisely, we want to construct a $\bu_c:(\Omega_M\setminus\Omega^n)\to \R^3$ such that it does not change the trace of the fluid velocity  on
	        the moving boundary $\Gamma^n$, such that condition \eqref{fint} is satisfied for $ \bff = - \nabla \cdot \left(\bup + \bu_c\right)$, and  such that
	        the $H^1$ norm of $\bu_c$ is controlled by the $H^1$-norm of $\bu$:
		\begin{enumerate}[(i)]
			\item $\bu_c|_{\Gamma^n}=0,$
			\item $\dis\int_{\partial(\Omega_M\setminus\Omega^n)}(\bup+\bu_c)\cdot \bn = 0,$
			\item $\|\bu_c\|_{H^1(\Omega_M\setminus\Omega^n)}\leq C \|\bu\|_{H^1(\Omega^n)}.$
		\end{enumerate} 
		The first condition will ensure that $(\bup+\bu_c)|_{\Gamma^n}=\bu,$ while the second condition is a compatibility condition corresponding to the fact that the integral of the right-hand side of problem \eqref{fintnew} has to be zero.
		
		To construct  such a $\bu_c$ we consider functions in $C_c^{\infty}(\overline{\omega}\times (R+\et^n,R_{max}]).$ It is clear that the first condition is then automatically satisfied,
		and the second condition becomes:
		\begin{equation}\label{kappa}
		\int_{\partial\Omega_M}(\bup+\bu_c)\cdot \bn = 0.
		\end{equation} 
		To satisfy condition~\eqref{kappa} we can take  $\bu_c :=-c\boldsymbol{i},$ where $c=\int_{\partial\Omega_M}\bup\cdot \bn$, and $\boldsymbol{i}$ 
		is such that $\int_{\partial\Omega_M}\boldsymbol{i}\cdot \bn=1$. 
		Finally, to obtain the desired $H^1$-estimate from condition (iii), 
		we can choose $\boldsymbol{i}$ independent of $\bup$ and $\bn$ 
		(for example, we take $\boldsymbol{i}$ such that $\text{supp}\,\boldsymbol{i}$ does not intersect any of $\Omega^n$), 
		to get:
		\begin{equation*}
		\|\bu_c\|_{H^1(\Omega_M\setminus\Omega^n)}\leq C\|\bup\|_{H^1(\Omega_M)}\leq C\|\bu\|_{H^1(\Omega^n)}.
		\end{equation*}
		
		We now focus on the construction of a function $\bv$ such that \eqref{fintnew} holds, and is such that
		\begin{equation*}
		\|\bv\|_{H^1(\Omega_M)}\leq C \|\bup+\bu_c\|_{H^1(\Omega_M)}.
		\end{equation*}
		This will follow directly from Theorem III.3.1 in \cite{GaldiNS}, if we can verify that the assumption on the regularity of domain $\Omega_M\setminus\Omega^n$ is satisfied. 
		More precisely, based on Lemma III.3.1 in \cite{GaldiNS}, if we can show that our domain is star-shaped with respect to every point of $B_R(\boldsymbol{x}_0)$, 
		such that $\overline{B_R}(\boldsymbol{x}_0)\subset \Omega_M\setminus\Omega^n$, then there exists at least one solution $\bv$. 
		
		Indeed, to verify the {\bf{star-shape property}} of our domain $\Omega_M\setminus\Omega^n$, 
		we notice that since our approximate domain $\Omega^n$ is Lipschitz, 
		the complementary domain $\Omega_M\setminus\Omega^n$ is also Lipschitz, so we can decompose it in a union of finitely many star-shaped domains. 
		Moreover,  because of the uniform Lipschitz property \eqref{Lipschitz}, the finite number of star-shaped domains is independent of $n$, as it only depends on
		the uniform Lipschitz constant $C$. 
		
		We can now apply Theorem III.3.1 from \cite{GaldiNS} with $\bff=-\nabla\cdot(\bup+\bu_c)$ to see that there exists a $\bv$ such that the following holds:
		\begin{equation*}
		\nabla\cdot\bv = -\nabla\cdot(\bup+\bu_c)
		\end{equation*}
		with
		\begin{equation*}
		\|\bv\|_{H^1(\Omega_M)}\leq C \|\bup+\bu_c\|_{H^1(\Omega_M)}.
		\end{equation*}
		Therefore, we have constructed an extension  $\bul := \bup + \bu_c + \bv \in V$ of the fluid velocity $\bu\in V_F^n$,  
		such that $\bul$ is divergence-free, and it satisfies the desired estimate $\|\bul\|_V\leq C\|\bu\|_{V_F^n}.$ 
	\end{proof}
	
	Using this result, we now define the mappings $I_{N,l,n}^i: V_N^{n+i}\to V_N^{n,l}$ in the following way:  
	\begin{equation}
	I_{N,l,n}^i(\bu_N^{n+i},\bv_N^{n+i},\bk_N^{n+i},\bz_N^{n+i})=(\bul_N^{n+i}|_{\Omega^{n,l}},(\bul_N^{n+i}|_{\Omega^{n,l}}\circ\bphi^{n})|_{\Gamma}\circ\bvarphi,\bk_N^{n+i},\bz_N^{n+i} ).
	\end{equation}
	The inequality \eqref{C2_first} from property (C2) now follows directly from the definition 
	 of the mappings $I_{N,l,n}^i$ and from Lemma~\ref{lemma:div}. 
	 
	 To see that inequality \eqref{C2_second} holds, we need to prove that there exists a universal, monotonically increasing function $g,$ which converges to $0$ as $h\to 0,$ where $h=l\dt,$ such that
	\begin{align*}
	\|I_{N,l,n}^i(\bu_N^{n+i},\bv_N^{n+i},\bk_N^{n+i},\bz_N^{n+i})-(\bu_N^{n+i},\bv_N^{n+i},\bk_N^{n+i},\bz_N^{n+i})\|_{H}\\
	\leq g(l\dt)\|(\bu_N^{n+i},\bv_N^{n+i},\bk_N^{n+i},\bz_N^{n+i})\|_{V_N^{n+i}}.
	\end{align*}
	To simplify notation, we drop the subscripts $N,l,n,$ and estimate each term separately:
	\begin{align*}
	\|I^i\bu_N^{n+i}-\bu_N^{n+i}\|_{L^2(\Omega_M)}&=
	\|\bul_N^{n+i}|_{\Omega^{n,l}}-\bu_N^{n+i}\|_{L^2(\Omega_M)}=\left(\int_{\Omega^{n,l}\backslash\Omega^{n+i}}|\bul_N^{n+i}|^2\right)^{1/2}\\
	&\leq C \|\nabla\bul_N^{n+i}\|_{L^2(\Omega_M)}\left|\int_{R+\et^{n+i}}^{R+\et^{n,l}}dr\right|^{1/2}dzd\theta\leq C\sqrt{l\dt}\|\bul_N^{n+i}\|_{H^1(\Omega_M)}\\
	&\leq C\sqrt{l\dt} \|\bu_N^{n+i}\|_{V_F^{n+i}}.
	\end{align*}
	The second term is estimated by using the mean value theorem, just like in \eqref{pom1}:
	\begin{align*}
	\|I^i\bv_N^{n+i}-\bv_N^{n+i}\|_{L^2(\omega)}&=\|(\bul_N^{n+i}|_{\Omega^{n,l}}\circ\bphi^{n})|_{\Gamma}\circ\bvarphi-(\bu_N^{n+i}\circ\bphi^{n+i})|_{\Gamma}\circ\bvarphi\|_{L^2(\omega)}\\
	&\leq C \|\nabla\bul_N^{n+i}\|_{L^2(\Omega_M)} \|\bet_N^{n+i}-\bet_N^{n}\|_{L^\infty(\omega)}\\
	&\leq C\sqrt{l\dt} \|\bu_N^{n+i}\|_{V_F^{n+i}}.
	\end{align*}
	By combining the estimates above we get:
	\begin{align*}
	&\|I_{N,l,n}^i(\bu_N^{n+i},\bv_N^{n+i},\bk_N^{n+i},\bz_N^{n+i})-(\bu_N^{n+i},\bv_N^{n+i},\bk_N^{n+i},\bz_N^{n+i})\|_{H}\\
	&= \|I^i\bu_N^{n+i}-\bu_N^{n+i}\|_{L^2(\Omega_M)} + \|I^i\bv_N^{n+i}-\bv_N^{n+i}\|_{L^2(\omega)}\\
	&\leq  C\sqrt{l\dt} \|\bu_N^{n+i}\|_{V_F^{n+i}}
	\leq g(l\dt) \|(\bu_N^{n+i},\bv_N^{n+i},\bk_N^{n+i},\bz_N^{n+i})\|_{V_N^{n+i}}.
	\end{align*}
	This finishes the proof of Property C2.
	\\[3mm]
	\noindent
	\textbf{Property C3.} We need to prove the uniform Ehrling property, stated in \eqref{ehrling}. The main difficulty comes from the fact that we have to work with moving domains, which are parameterized by $N,l,n.$ To show that the uniform Ehrling  estimate holds, independently of all three parameters, we simplify notation, and replace the indices $N,l,n$ with only one index $n$, so that our function spaces are now denoted $V_n, H_n$ and $Q_n'.$  We show the uniform Ehrling property by contradiction. We start by assuming that the statement of the uniform Ehrling property \eqref{ehrling} is false. More precisely, we assume that there exists a $\delta_0>0 $ and a sequence $\bh_n = (\bu_n,\bv_n,\bk_n,\bz_n)\in H_n$ such that
	\begin{equation*}
	\|\bh_n\|_{H}=\|\bh_n\|_{H_n}>\delta_0 \|\bh_n\|_{V_n}+n\|\bh_n\|_{Q_n'}.
	\end{equation*}
	Here we have extended the functions $\bu_n$ onto the maximal domain $\Omega_M$ by 0.
	We also replace the $V_n$ norm on the right-hand side by the norm on $V:$
	\begin{equation*}
	\|\bh_n\|_H>\delta_0 \|\bh_n\|_{V_n}+n\|\bh_n\|_{Q_n'}\geq C\delta_0\|\bh_n\|_{V}+n\|\bh_n\|_{Q_n'}.
	\end{equation*}
	Without the loss of generality we can assume that our sequence $(\bh_n)$ is such that $\|\bh_n\|_H=1.$ The two terms on the right-hand side are uniformly bounded in $n$, which implies that there exists a subsequence, which we again denote by $(\bh_n),$ such that:
	\begin{equation}\label{condition}
	\|\bh_n\|_H=1,\quad \|\bh_n\|_{V}\leq \frac{1}{C\delta_0},\quad \|\bh_n\|_{Q_n'}\to 0.
	\end{equation}
	Since $(\bh_n)$ is uniformly bounded in $V$, and by the compactness of the embedding of $V$ into $H$, we conclude that there exists a subsequence $(\bh_n)$ that converges to $\bh$ strongly in $H$. We want to show that $\bh=(\bu,\bv,\bk,\bz)=0,$ i.e. that $\bh_n\to 0$ in $H$,
	 which would contradict the assumption $\|\bh_n\|_H=1.$
	
	We start by showing that $\bu=0$ in $\Omega_M.$ 
	Recall that $\bet_n=\bet_N^{n,l}$ is the maximum of finitely many functions $\bet_N^{n+i},\;i=0,\dots,l,$ which converge uniformly, and denote by $\bet^{\star}$ the limit of $\bet_n$ as $n$ tends to infinity. Furthermore,  denote by $\Omega^{\star}$ the fluid domain determined by the function $\bet^{\star}.$ We will consider the set $\Omega_M\setminus\Omega^{\star}$ and show that $\bu=0$ there, and then the set $\Omega^{\star}$ and show that $\bu=0$ there. For that purpose, we introduce the following characteristic functions, which are defined on the maximal domain $\Omega_M:$\\
	\begin{minipage}{0.5\linewidth}
		\centering
		\begin{equation*}
		\chi_n (\mathbf{x})=\begin{cases}
		1,\quad \mathbf{x}\in\Omega^{n,l},\\
		0,\quad \text{otherwise},
		\end{cases}
		\end{equation*}
	\end{minipage}
	\begin{minipage}{0.5\linewidth}
		\centering
		\begin{equation*}
		\chi^{\star} (\mathbf{x})=\begin{cases}
		1,\quad \mathbf{x}\in\Omega^{\star},\\
		0,\quad \text{otherwise}.
		\end{cases}
		\end{equation*}
	\end{minipage}
	By using the fact that functions $\bu_n$ are extended by zero outside $\Omega^{n,l},$ we easily obtain that $\bu$ is zero outside $\Omega^{\star}.$ More precisely:
	\begin{equation*}
	(1-\chi^{\star})\bu=\lim_{n}(1-\chi_n)\bu_n=0.
	\end{equation*}
	Next we  show that $\bu$ is zero in $\Omega^{\star}.$  
	We start by recalling the 
	definition of appropriate test functions, since we want to use
	the condition $\|h_n\|_{Q_n'} = \|(\bu_n,\bv_n,\bk_n,\bz_n)\|_{Q_n'}\to 0$ from \eqref{condition} to prove  $\bu=0$ in $\Omega^{\star}$:
	\begin{equation*}
	Q_n=\{(\bups,\bpsi,\bxi,\bzeta)\in (V_F(\Omega^{n,l})\cap H^5(\Omega^{n,l}))\times V_K\times V_S\times V_S: (\bups\circ\bphi^n)|_{\Gamma}\circ\bvarphi=\bpsi,\bpsi\circ\bpi=\bxi\}.
	\end{equation*}
	We will now consider ``special'' test functions, which will help us conclude the desired result.
	Take $\bups\in V_F(\Omega_M)\cap H^5(\Omega_M)$ and consider the test function $\bpsi$ for the shell velocity such that $\bpsi=(\bups\circ\bphi^{\bet^{\star}})|_{\Gamma}\circ\bvarphi$, consider $\bxi$ the test function for the mesh velocity
	such that $\bxi=\bpsi\circ\bpi$, and consider $\bzeta$ the test function for the mesh rotation velocity to be an arbitrary 
	function $\bzeta\in H^{-s}(\mathcal{N}).$ It is then clear that
	$(\bups,\bpsi,\bxi,\bzeta)\in H.$ By the density of $Q_n$ in $H,$ and 
	by the uniform convergence of $\bet_n,$
	we can rewrite the duality pairing on $H$ in the following way:
	\begin{align}\label{H_duality}
	\begin{split}
	\langle (\bu,\bv,\bk,\bz),(\bups,\bpsi,\bxi,\bzeta)\rangle_H&=\lim_n \langle (\bu_n,\bv_n,\bk_n,\bz_n),(\bups,\bpsi_n,\bxi_n,\bzeta)\rangle_H\\
	&=\lim_n\,_{Q_n'}\langle (\bu_n,\bv_n,\bk_n,\bz_n),(\bups,\bpsi_n,\bxi_n,\bzeta)\rangle_{Q_n}\\
	&\leq \|(\bu_n,\bv_n,\bk_n,\bz_n)\|_{Q_n'}\|(\bups,\bpsi_n,\bxi_n,\bzeta)\|_{Q_n},
	\end{split}
	\end{align} 
	where $\bpsi_n=(\bups\circ\bphi^n)|_{\Gamma}\circ\bvarphi$ and $\bxi_n=\bpsi_n\circ\bpi.$
	Since $\|(\bu_n,\bv_n,\bk_n,\bz_n)\|_{Q_n'}\to 0$ and $\|(\bups,\bpsi_n,\bxi_n,\bzeta)\|_{Q_n}\leq C,$ we obtain that
	$\langle (\bu,\bv,\bk,\bz),(\bups,\bpsi_n,\bxi_n,\bzeta)\rangle_H=0.$
	
	Now, take a test function $(\bups,0,0,0)\in Q_n$ such that $\text{supp}\,\bups\subset \Omega^{\star}.$ Using the uniform convergence of the sequence $\bet_n,$ we can find an $n_0$ such that $\chi_n(\mathbf{x})=\chi^{\star}(\mathbf{x})=1,\; \forall n\geq n_0,\;\mathbf{x}\in\text{supp}\,\bups.$ Therefore, we have
	\begin{align*}
	0=\langle (\bu,\bv,\bk,\bz),(\bups,0,0,0)\rangle_H=\lim_n\int_{\Omega_M}\chi_n\bu_n\cdot\bups=\int_{\Omega_M}\chi^{\star}\bu\cdot\bups =\int_{\Omega^{\star}}\bu\cdot\bups,
	\end{align*}
	i.e. $\bu=0$ in $\Omega^{\star}.$ 
	
	To see that $\bv=0$ in $\omega$ and $\bk = 0$ in $\mathcal{N},$ we take the test function $(\bups,\bpsi_n,\bxi_n,0)\in Q_n,$
	such that $\text{supp}\,\bpsi_n\subset \omega\setminus\omega_S$ and $\bpsi_n,\bxi_n$ satisfy the aforementioned coupling conditions. 
    Then we have
	\begin{align*}
	0=\langle (\bu,\bv,\bk,\bz),(\bups,\bpsi,\bxi,0)\rangle_H=\int_{\Omega_M}\bu\cdot\bups +\lim_n \int_{\omega}\bv_n\cdot \bpsi_n+\lim_n\sum_{i=1}^{n_E}\int_0^{l_i}(\bk_n)_i\cdot(\bxi_n)_i.
	\end{align*}
	Since $\bu=0$ in $\Omega_M$ and $\bxi_n=0$ in $\mathcal{N},$ we obtain
	that $\bv=0$ in $\omega.$ Now take the same test function, but without 
	a restriction on the support of $\bpsi_n$ to see that $\bk=0$ in $\mathcal{N}.$
	Finally, to show that $\bz=0$ in $\mathcal{N},$ take the test function $(\bups,\bpsi_n,\bxi_n,\bzeta)\in Q_n,$ calculate the scalar product, and
	use the just obtained result that $\bu=0,\bv=0$ and $\bk=0.$
	
	To conclude, we have shown that $\bh=0$ in $H,$ which is
	in contradiction with the assumption $\|\bh_n\|_H=1$,
	which implies that the uniform 
	Ehrling property is satisfied by the sequence of approximate solutions.
	\\[3mm]
	\noindent
	\textbf{Conclusion.} We have verified all the assumptions from Theorem~\ref{thm:compact}. 
	Therefore, $\{(\bu_N,\bv_N,\bk_N,\bz_N)\}_{N=1}^\infty$ is relatively compact in $L^2(0,T;H).$
\end{proof}
We summarize the strong convergence results obtained in Sec.~\ref{subsec:conv_shell} and Theorem~\ref{thm:strong}. We have shown that there exist subsequences $(\bu_N)_{N\in\N}, (\be_N)_{N\in\N}, (\bet_N)_{N\in\N},$ $ (\bv_N)_{N\in\N},(\bk_N)_{N\in\N},(\bz_N)_{N\in\N}$ such that
\begin{align}\label{strong_conv}
\begin{split}
\bu_N\to\bu&\text{ in } L^2(0,T;L^2(\Omega_M)),\\
\tau_{\dt}\buh_N\to \bu &\text{ in } L^2(0,T;L^2(\Omega_M)),\\
\be_N\to\be & \text{ in } C([0,T];W^{1,\infty}(\omega)),\\
\bet_N\to\bet & \text{ in } C([0,T];W^{1,\infty}(\omega)),\\
\bv_N\to\bv &\text{ in } L^2(0,T;L^2(\omega)),\\
\tau_{\dt}\bv_N\to\bv &\text{ in } L^2(0,T;L^2(\omega)),\\
\bk_N\to\bk &\text{ in } L^2(0,T;H^{-s}(\mathcal{N})),\\
\bz_N\to\bz &\text{ in } L^2(0,T;H^{-s}(\mathcal{N})).
\end{split}
\end{align}
The statements about convergence of $(\tau_{\dt}\buh_N)_{N\in\N}$ and $(\tau_{\dt}\bv_N)_{N\in\N}$ follow directly from Statement 3 of Theorem~\ref{thm:unif}. We conclude this section by stating one last convergence result that will be used in the next section to prove that the limiting functions satisfy the weak formulation \eqref{weak} of the full FSI problem, i.e.
\begin{equation*}
\tau_{\dt}\bu_N\to\bu\text{ in }L^2(0,T;L^2(\Omega_M)).
\end{equation*}

\section{The limiting problem and the main result}
We want to show that the limiting functions satisfy the weak formulation \eqref{weak} of the full fluid-mesh-shell interaction problem. 
For this purpose, we  consider the weak formulations of the coupled semi-discretized problems,
and take the limit as $N\to\infty$, or $\dt\to 0$. 
The strong convergence results, obtained in the previous section, are crucial in this step. 
Unfortunately, there is one more obstacle that needs to be overcome before we can pass to the limit:
the velocity test functions in the semi-discretized problems depend on the fluid domains, 
and so passing to the limit in the semi-discretized weak formulations requires special care.
To deal with this problem we plan to construct {\emph{appropriate}} divergence-free test functions, 
whose dependence on $N$ can be controlled.

\subsection{Construction of the appropriate test functions}
We begin by recalling that the test functions $(\bups,\bpsi,\bxi,\bzeta)$ for the limiting problem are defined by the test space $\mathcal{Q}(0,T)$ in \eqref{test_space_Q},
 which depends on $\be.$ Similarly, the test spaces for the approximate problems depend on $N$ through the dependence on $\be_N.$ The fact that the velocity test functions depend on $N$ presents a technical difficulty when passing to the limit as $N\to\infty.$ For that reason, our goal is to construct the test functions, 
 both for the limiting problem and for the approximate problems, which are smooth, divergence-free, and are such that 
their dependence on $N$ can be controlled. 

Moreover, to pass to the limit as $N \to \infty$, it will be easier to work on the maximal domain $\Omega_M$.
Therefore, we will need the test functions to also be defined
on the maximal domain. In fact, we will construct the test functions on $\Omega_M$ to consist of two parts.
One with compact support in the 
given fluid domain;
such test functions can be extended to $\Omega_M$ by their  zero trace on the boundary,
and the other part which will handle the information about the boundary data. 
The construction of the second part is crucial to be able to control the behavior of the test functions 
in terms of $N$, and obtain uniform convergence results that will allow us to pass to the limit. 

We first deal with the test functions
that handle the information about the fluid domain boundary. 
To construct the divergence-free, smooth test functions that can handle the nonzero boundary data,
we rely on the approach similar to that used in Lemma~\ref{lemma:div}. Namely, we construct smooth 
extensions to $\Omega_M$ of the test functions defined on the fluid domain boundary,
namely of the test functions corresponding to the Koiter shell problem,
 and then ``correct'' the extensions so that the resulting functions are divergence-free. 
\vskip 0.1in
\noindent
{\bf{Extensions of the Koiter shell test functions to $\Omega_M$.}}
We start by taking a test function $\bpsi\in C_c^1([0,T);H^2(\omega))$ for the Koiter shell problem,
and then construct an extension
to $\Omega_M$, denoted by $\bupst$,  such that the extension $\bupst$ has the property that its trace on $\Gamma$ is $\bpsi$: $\bupst|_{\Gamma}\circ\bvarphi=\bpsi$. 
Notice that $\bupst\in C_c^1([0,T);H^2(\Omega_M))$.

Using the test functions $\bupst$, which are independent of $n$ and $N$, we now construct the test functions
for the fluid velocities defined on approximate domains $\Omega^n$, and also for the test functions defined
on the continuous (limiting) domain $\Omega^{\bet}$.
The test functions associated with the approximate domains $\Omega^n$ are defined as follows:
\begin{equation*}
\bups_N^n=\begin{cases}
\bupst\circ(\bphi^n)^{-1},\text{ in }\Omega^n,\\
\bupst\circ(\tilde{\bphi}^n)^{-1},\text{ in }\Omega_M\setminus\Omega^n,
\end{cases}
\end{equation*}
where the mapping $\tilde{\bphi}^n$ is an extension of the mapping $\bphi^n$ to the maximal domain $\Omega_M$, as introduced in Lemma~\ref{lemma:div}.
It is easy to check that $\bups_N^n\in H^1(\Omega_M),\forall n = 1,\dots, N.$ 

The test functions associated with the continuous (limiting) domain $\Omega^{\bet}$ are defined as follows:
\begin{equation*}
\bups=\begin{cases}
\bupst\circ(\bphi^{\bet})^{-1},\text{ in }\Omega^{\bet}(t),\\
\bupst\circ(\tilde{\bphi}^{\bet})^{-1},\text{ in }\Omega_M\setminus\Omega^{\bet}(t),
\end{cases}
\end{equation*}
where the mapping $\tilde{\bphi}^{\bet}$ is an extension of the mapping $\bphi^{\bet}$ to the maximal domain $\Omega_M$,  
as discussed in Lemma~\ref{lemma:div}.
It is clear that $\bups\in H^1(\Omega_M).$

We emphasize that the test functions $\bups_N^n$ and $\bups$ depend on the choice of the test function $\bpsi$. 
However, for simplicity, we will not be explicitly denoting that dependence.

\begin{prop}\label{convergence_test_functions}
The test functions $\bups_N^n$ constructed above have the following convergence properties:
\begin{equation*}
\bups_N\to\bups \text{ uniformly in  } (0,T)\times \Omega_M,
\end{equation*}
\begin{equation*}
\nabla\bups_N\to\nabla\bups\text{ in } L^2(0,T;L^p(\Omega_M)),\quad p < \infty.
\end{equation*}  
\end{prop}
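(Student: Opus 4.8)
The plan is to reduce both assertions to the strong convergence of the reparameterized displacements $\bet_N\to\bet$ (available from \eqref{strong_conv}) together with the uniform bi-Lipschitz bounds of Proposition~\ref{bi-Lipschitz}. First I would package the piecewise definition of the test functions into a single family of self-maps of $\Omega_M$: for each $n$, glue $\bphi^n$ on $\Omega$ and $\tilde{\bphi}^n$ on $\Omega_M\setminus\Omega$ into a map $\Phi^n:\Omega_M\to\Omega_M$; since $\tilde{\bphi}^n$ extends $\bphi^n$ across $\Gamma$, $\Phi^n$ is a homeomorphism, and — because $\tilde{\bphi}^n$ can be chosen explicitly in terms of $\bet_N^n$ in the same spirit as the ALE map of Section~\ref{sec:ale} — it is bi-Lipschitz with constants independent of $N$ and $n$ by Proposition~\ref{bi-Lipschitz}. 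Defining $\Phi^{\bet}$ analogously, we have $\bups_N^n=\bupst\circ(\Phi^n)^{-1}$ and $\bups=\bupst\circ(\Phi^{\bet})^{-1}$, where $\bupst$ denotes the $\Omega_M$-extension of $\bpsi$ at the relevant time. Since $\bpsi\in C^1_c([0,T);H^2(\omega))$ and $H^2(\Omega_M)\hookrightarrow C(\overline{\Omega_M})$, the extension $\bupst$ is uniformly continuous on $[0,T]\times\overline{\Omega_M}$, so replacing it by its piecewise-constant-in-time interpolant costs only a quantity tending to $0$ uniformly; it therefore suffices to prove the two convergences for the spatial composition, uniformly with respect to the time slab.

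For the uniform convergence, from $\bet_N\to\bet$ in $C([0,T];W^{1,\infty}(\omega))$ one gets $\Phi^n\to\Phi^{\bet}$ uniformly (choosing the extensions $\tilde{\bphi}^n$, $\tilde{\bphi}^{\bet}$ compatibly, uniformly in that norm as well). Using the uniform Lipschitz constant $L$ of the inverses, $\|(\Phi^n)^{-1}-(\Phi^{\bet})^{-1}\|_{L^\infty(\Omega_M)}\le L\,\|\Phi^n-\Phi^{\bet}\|_{L^\infty(\Omega_M)}\to 0$, and since $\bupst$ is uniformly continuous, $\bupst\circ(\Phi^n)^{-1}\to\bupst\circ(\Phi^{\bet})^{-1}$ uniformly. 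Together with the time-interpolation remark this yields $\bups_N\to\bups$ uniformly on $(0,T)\times\Omega_M$.

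For the gradients, the chain rule for bi-Lipschitz changes of variables gives $\nabla\bups_N^n=\big((\nabla\bupst)\circ(\Phi^n)^{-1}\big)\,\nabla(\Phi^n)^{-1}$ a.e., and similarly for $\nabla\bups$. The factor $\nabla(\Phi^n)^{-1}$ converges uniformly: writing $\nabla(\Phi^n)^{-1}=\big[(\nabla\Phi^n)\circ(\Phi^n)^{-1}\big]^{-1}$, the uniform bi-Lipschitz bounds confine the argument to a compact set of invertible matrices on which inversion is Lipschitz, so $\nabla\Phi^n\to\nabla\Phi^{\bet}$ uniformly and $(\Phi^n)^{-1}\to(\Phi^{\bet})^{-1}$ uniformly force $\nabla(\Phi^n)^{-1}\to\nabla(\Phi^{\bet})^{-1}$ uniformly, and this family is bounded in $L^\infty(\Omega_M)$ uniformly in $N,n$. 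For the factor $(\nabla\bupst)\circ(\Phi^n)^{-1}$, note that $\bpsi\in H^2(\omega)$ with $\omega$ two-dimensional, hence $\bpsi\in W^{1,p}(\omega)$ for every $p<\infty$; taking the extension $\bupst$ cylindrical in the radial variable so that its tangential derivatives are inherited from $\bpsi$, we get $\nabla\bupst\in L^p(\Omega_M)$ for every $p<\infty$. Approximating $\nabla\bupst$ in $L^p(\Omega_M)$ by a continuous $\bg_\varepsilon$, the change-of-variables formula together with the uniform bounds on the Jacobians of $(\Phi^n)^{-1}$ gives $\|(\nabla\bupst)\circ(\Phi^n)^{-1}-\bg_\varepsilon\circ(\Phi^n)^{-1}\|_{L^p(\Omega_M)}\le C\varepsilon$ for all $n$, while $\bg_\varepsilon\circ(\Phi^n)^{-1}\to\bg_\varepsilon\circ(\Phi^{\bet})^{-1}$ uniformly by the previous step; letting $n\to\infty$ and then $\varepsilon\to0$ shows $(\nabla\bupst)\circ(\Phi^n)^{-1}\to(\nabla\bupst)\circ(\Phi^{\bet})^{-1}$ in $L^p(\Omega_M)$, uniformly in the time slab. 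Multiplying the two factors (one convergent and bounded in $L^p$, the other convergent and bounded in $L^\infty$) via the identity $f_n g_n-fg=(f_n-f)g_n+f(g_n-g)$ gives $\nabla\bups_N^n\to\nabla\bups$ in $L^p(\Omega_M)$ with a uniform bound, and dominated convergence in $t$ upgrades this to $\nabla\bups_N\to\nabla\bups$ in $L^2(0,T;L^p(\Omega_M))$ for every $p<\infty$.

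The only step that is not routine is the convergence of $(\nabla\bupst)\circ(\Phi^n)^{-1}$ when $\nabla\bupst$ is merely $L^p$ and not continuous: it hinges on approximating by continuous functions and, crucially, on the uniform control of the Jacobians of $(\Phi^n)^{-1}$ — that is, on the uniform bi-Lipschitz property of Proposition~\ref{bi-Lipschitz}, which in turn rests on the Lipschitz Assumption~\ref{LipschitzAssumption}. All the remaining ingredients (uniform continuity of $\bupst$, Lipschitz stability of matrix inversion on compacta, the $f_n g_n$ splitting) are elementary.
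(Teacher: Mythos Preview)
Your proof is correct and follows the same strategy as the paper's own argument: both reduce the uniform convergence to the uniform convergence of $\bet_N$, and both obtain the gradient convergence via the chain rule together with the convergence of $\nabla\bet_N$. The paper's proof is a two-sentence sketch (``from uniform convergence of $\bet_N$'' for the first part, ``using the chain rule and $\nabla\bet_N\to\nabla\bet$ in $L^2(0,T;L^p(\omega))$'' for the second), whereas you supply the details it omits---the packaging into a single bi-Lipschitz map $\Phi^n$, the Lipschitz stability of matrix inversion on compacta to control $\nabla(\Phi^n)^{-1}$, and the density argument for the composition $(\nabla\bupst)\circ(\Phi^n)^{-1}$ when $\nabla\bupst$ is only in $L^p$. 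One small remark: you invoke the $C([0,T];W^{1,\infty}(\omega))$ convergence of $\bet_N$ listed in \eqref{strong_conv} to get \emph{uniform} convergence of $\nabla(\Phi^n)^{-1}$, while the paper's sketch cites only the weaker $L^2(0,T;L^p)$ convergence of $\nabla\bet_N$; either suffices for the stated conclusion, since with the uniform $L^\infty$ bounds from Proposition~\ref{bi-Lipschitz} your product splitting $f_ng_n-fg=(f_n-f)g_n+f(g_n-g)$ still closes with $g_n\to g$ merely in $L^p$ and bounded in $L^\infty$.
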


\proof
From uniform convergence of $\bet_N,$ we obtain that
\begin{equation}\label{test_fun_conv}
\bups_N\to\bups \text{ uniformly in  } (0,T)\times \Omega_M,
\end{equation}
where $\bups_N=(\bups_N^1,\bups_N^2,\dots,\bups_N^N).$
Using the chain rule, and the fact that $\nabla\bet_N\to\nabla\bet$ in $L^2(0,T;L^p(\omega)),$ one can see that 
\begin{equation*}
\nabla\bups_N\to\nabla\bups\text{ in } L^2(0,T;L^p(\Omega_M)),\quad p < \infty.
\end{equation*}  
\qed

While the functions $\bups_N$ have good spatial regularity and convergence properties, 
they are discontinuous in time at points $n\dt$, since they are defined via the mappings
$\bphi^n(z,r,\theta)=\bphi^{\bet}(n\dt,z,r,\theta)=(z,(R+\et_N^n)r,\theta)$,
which are step functions in time.
For that reason, we introduce  {\emph{linear, continuous extensions}}  $\bupsl_N$ of the test functions $\bups_N^n$, 
 on each subinterval $[(n-1)\dt,n\dt], n=1,\dots,N,$ and such that
\begin{equation*}
\bupsl_N(n\dt,\cdot)=\bups_N(n\dt,\cdot).
\end{equation*}
Using strong convergence of approximate shell velocities in $L^2(0,T;L^2(\omega)),$ we get that
\begin{equation*}
\partial_t\bupsl_N\to\partial_t\bups\text{ in } L^2(0,T;L^p(\Omega_M)),\quad p<2.
\end{equation*} 

Unfortunately, $\bups_N$, $\bupsl_N$, and $\bups$ are not necessarily divergence-free. 
This is why we need to ``correct'' the construction of the appropriate test functions, 
in a way similar to the proof of Lemma \ref{lemma:div}.
\vskip 0.1in
\noindent
{\bf{Construction of divergence-free correction.}}
We plan to use Theorem III.3.1. from \cite{GaldiNS}, just like in  Lemma \ref{lemma:div}.
We construct the corrections to the velocity test functions $\bups^n_N$
via two functions: $\bw_N^n$ and ${\bf v}_N^n$. The function ${\bf v}_N^n$ will be obtained
as a solution of problem \eqref{galdi}, and $\bw_N^n$ will be constructed so that the compatibility conditions
necessary for the existence of ${\bf v}_N^n$ are satisfied. 
There is a slight difference with the proof of Lemma~\ref{lemma:div}. Since we do not already have a divergence-free
function inside $\Omega^n$, as we did in Lemma~\ref{lemma:div}, we need to define the divergence-free correction both
inside $\Omega^n$ and in its complement $\Omega_M\setminus\Omega^n$. The correction functions in the complement
$\Omega_M\setminus\Omega^n$ will have an extra "tilde" notation: $\tilde\bw_N^n$ and $\tilde{\bf v}_N^n$. 
The same approach will be used for the construction of the divergence-free correction of $\bups$. 

More precisely, we define the mappings ${\bw}_N^n:\Omega^n\to \R^3$ satisfying the following conditions:
\begin{enumerate}[(i)]
	\item $\text{supp}\,{\bw}_N^n \subseteq \Omega^n,$
	\item $\dis\int_{\Omega^n} \nabla\cdot (\bups_N^n +{\bw}_N^n)=0,$
	\item $\|{\bw}_N^n\|_{H^1(\Omega^n)}\leq C\|\bups_N^n\|_{H^1(\Omega_M)},$
\end{enumerate} 
and the mappings ${\tilde\bw}_N^n:(\Omega_M\setminus\Omega^n)\to\R^3$ satisfying the following conditions:
\begin{enumerate}[(i)]
	\item $\text{supp}\,{\tilde\bw}_N^n \subseteq \Omega_M\setminus\Omega^n,$
	\item $\dis\int_{\Omega_M\setminus\Omega^n} \nabla\cdot (\bups_N^n+{\tilde\bw}_N^n)=0,$
	\item $\|{\tilde\bw}_N^n\|_{H^1(\Omega_M\setminus\Omega^n)}\leq C\|\bups_N^n\|_{H^1(\Omega_M)},$
\end{enumerate} 
in the same way as in Lemma~\ref{lemma:div}. Furthermore, for the same reason as in Lemma~\ref{lemma:div} 
we conclude that we can decompose both $\Omega^n$ and $\Omega_M\setminus\Omega^n$ 
in a finite number of  star-shaped domains (with respect to fixed balls), where the number depends only of the uniform
Lipschitz constant, and not on $n$ or $N$. 
We can now apply Theorem III.3.1. from \cite{GaldiNS} to conclude that there exists a function ${\bf v}_N^n$ such that:
\begin{equation*}
\nabla\cdot{\bf v}_N^n = - \nabla\cdot (\bups_N^n+{\bw}_N^n)
\end{equation*}
with
\begin{equation*}
\|{\bf v}_N^n\|_{H^1(\Omega_M)}\leq C \|\bups_N^n+{\bw}_N^n\|_{H^1(\Omega_M)}\leq C \|\bups_N^n\|_{H^1(\Omega_M)},
\end{equation*}
and a function $\tilde{\bf v}_N^n$ such that:
\begin{equation*}
\nabla\cdot\tilde{\bf v}_N^n = - \nabla\cdot (\bups_N^n+\tilde{\bw}_N^n)
\end{equation*}
with
\begin{equation*}
\|\tilde{\bf v}_N^n\|_{H^1(\Omega_M)}\leq C \|\bups_N^n+\tilde{\bw}_N^n\|_{H^1(\Omega_M)} \leq C \|\bups_N^n\|_{H^1(\Omega_M)}.
\end{equation*}
Finally, if we set
\begin{equation}\label{test_functions_N}
\bups_N(\bpsi)=
\begin{cases}
\bups_N^n+{\bw}_N^n+{\bf v}_N^n,\text{ in } \Omega^n,\\
\bups_N^n+\tilde{\bw}_N^n+\tilde{\bf v}_N^n,\text{ in } \Omega_M\setminus\Omega^n,
\end{cases}
\end{equation}
we have that $\bups_N(\bpsi)$ is a smooth, divergence-free function on the maximal domain $\Omega_M.$ In the same way, we can construct a divergence-free extension $\bups(\bpsi)$ of the test function $\bups$ corresponding to the limiting problem:
\begin{equation}\label{test_functions}
\bups(\bpsi)=
\begin{cases}
\bups+{\bw}+{\bf v},\text{ in } \Omega^{\bet}(t),\\
\bups+\tilde{\bw}+\tilde{\bf v},\text{ in } \Omega_M\setminus\Omega^{\bet}(t),\\
\end{cases}
\end{equation}
with
\begin{align*}
\|{\bw}\|_{H^1(\Omega^{\bet}(t))},\|{\bf v}\|_{H^1(\Omega^{\bet}(t))}\leq C\|\bups\|_{H^1(\Omega_M)},\\
\|\tilde{\bw}\|_{H^1(\Omega_M\setminus\Omega^{\bet}(t))},\|\tilde{\bf v}\|_{H^1(\Omega_M\setminus\Omega^{\bet}(t))}\leq C\|\bups\|_{H^1(\Omega_M)}.
\end{align*}
The following convergence results hold:
\begin{prop}
The test functions \eqref{test_functions_N} and \eqref{test_functions} constructed above, have the following convergence properties:
\begin{equation*}
\bups_N(\bpsi)\to \bups(\bpsi)\text{  uniformly in } (0,T)\times\Omega_M,
\end{equation*}
\begin{align*}
\nabla\bups_N(\bpsi)\to \nabla\bups(\bpsi)&\text{ in } L^2(0,T;L^p(\Omega_M)),\quad p < \infty,\\
\partial_t\bupsl_N(\bpsi)\to \partial_t\bups (\bpsi)&\text{ in } L^2(0,T;L^p(\Omega_M)),\quad p<2.
\end{align*}
\end{prop}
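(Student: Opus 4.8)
The plan is to treat the corrected, divergence-free test functions \eqref{test_functions_N} and \eqref{test_functions} as the sum of the extensions $\bups_N$, $\bups$ already handled in Proposition~\ref{convergence_test_functions} and of the divergence-free corrections — $\bw_N^n+\bv_N^n$ on $\Omega^n$ and $\tilde\bw_N^n+\tilde\bv_N^n$ on $\Omega_M\setminus\Omega^n$, with the analogous $\bw+\bv$, $\tilde\bw+\tilde\bv$ for the limiting domain — and to show that each correction converges in the three required topologies. Since Proposition~\ref{convergence_test_functions} already supplies $\bups_N\to\bups$ uniformly, $\nabla\bups_N\to\nabla\bups$ in $L^2(0,T;L^p(\Omega_M))$ for $p<\infty$, and $\partial_t\bupsl_N\to\partial_t\bups$ in $L^2(0,T;L^p(\Omega_M))$ for $p<2$, the claim will follow by adding. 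I would proceed in three steps: the flux corrections $\bw_N^n,\tilde\bw_N^n$, then the Bogovskii-type corrections $\bv_N^n,\tilde\bv_N^n$, then the reassembly and the time derivative.

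\textbf{Step 1: the flux corrections.} Following the construction imported from Lemma~\ref{lemma:div}, $\bw_N^n$ (and likewise $\tilde\bw_N^n$) is a low-dimensional correction whose only dependence on $N$ and $n$ enters through a scalar — essentially the divergence defect $c_N^n=\int_{\Omega^n}\nabla\cdot\bups_N^n=\int_{\Omega_M}\chi_N\,\nabla\cdot\bups_N^n$ — and through the domain $\Omega^n$ itself, the normalizing field being fixed (its support can be taken in a portion of $\Omega_M$ common to all the $\Omega^n$, by the uniform energy estimates of Theorem~\ref{thm:unif}). One checks that $\chi_N\to\chi$ boundedly a.e.\ on $(0,T)\times\Omega_M$ (a consequence of $\bet_N\to\bet$ in $C([0,T];W^{1,\infty}(\omega))$, so that the moving domains converge), and $\nabla\bups_N\to\nabla\bups$ in $L^2(0,T;L^p(\Omega_M))$ for every $p<\infty$ by Proposition~\ref{convergence_test_functions}; hence $c_N^n$ converges, uniformly in $t$, to the scalar $c=\int_{\Omega_M}\chi\,\nabla\cdot\bups$ defining $\bw$, and therefore $\bw_N^n\to\bw$, $\tilde\bw_N^n\to\tilde\bw$ uniformly on $\Omega_M$ together with their gradients. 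In particular the right-hand sides $-\nabla\cdot(\bups_N^n+\bw_N^n)$ and $-\nabla\cdot(\bups_N^n+\tilde\bw_N^n)$ converge strongly in $L^2(0,T;L^p(\Omega_M))$ to $-\nabla\cdot(\bups+\bw)$, resp.\ $-\nabla\cdot(\bups+\tilde\bw)$, and they satisfy the compatibility condition \eqref{fint} on $\Omega^n$, resp.\ $\Omega_M\setminus\Omega^n$, by construction.

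\textbf{Step 2: the Bogovskii corrections (the main obstacle).} The function $\bv_N^n$ is produced by Theorem~III.3.1 of \cite{GaldiNS} as a solution of $\nabla\cdot\bv_N^n=-\nabla\cdot(\bups_N^n+\bw_N^n)$ in $\Omega^n$ with zero trace, and similarly $\tilde\bv_N^n$ on $\Omega_M\setminus\Omega^n$, while $\bv,\tilde\bv$ solve the corresponding problems on $\Omega^{\bet}(t)$ and its complement. The difficulty I expect to require genuine care is that the solution operator of the divergence equation depends on the underlying domain, which here moves with $N$. The plan is to exploit the structural uniformity already established in Lemma~\ref{lemma:div}: by Assumption~\ref{LipschitzAssumption} and Proposition~\ref{bi-Lipschitz}, each of the relevant domains decomposes into a fixed finite number of subdomains, each star-shaped with respect to a ball of a fixed radius, the number and radius depending only on the uniform Lipschitz constant in \eqref{Lipschitz}. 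On each such piece Bogovskii's operator is given by an explicit integral formula whose kernel depends continuously on the function $\et$ defining the piece, so that combining the uniform convergence $\bet_N\to\bet$, the uniform bi-Lipschitz bounds of Proposition~\ref{bi-Lipschitz}, and the strong $L^p$-convergence of the right-hand sides from Step~1, one obtains $\bv_N^n\to\bv$ and $\tilde\bv_N^n\to\tilde\bv$ in $L^2(0,T;W^{1,p}(\Omega_M))$ for every $p<\infty$. Taking $p>3$ and using $W^{1,p}(\Omega_M)\hookrightarrow C(\overline{\Omega_M})$ upgrades this to uniform convergence on $(0,T)\times\Omega_M$, the uniformity in $t$ coming from the uniform-in-time convergence of $\bet_N$ together with the $C_c^1([0,T);H^2(\Omega_M))$-regularity of $\bupst$.

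\textbf{Step 3: reassembly and the time derivative.} Adding the estimates of Steps~1 and~2 to those of Proposition~\ref{convergence_test_functions} yields $\bups_N(\bpsi)\to\bups(\bpsi)$ uniformly on $(0,T)\times\Omega_M$ and $\nabla\bups_N(\bpsi)\to\nabla\bups(\bpsi)$ in $L^2(0,T;L^p(\Omega_M))$, $p<\infty$. For the time derivative I would pass to the piecewise-linear-in-time reparameterization $\bupsl_N(\bpsi)$: on each subinterval its derivative splits into the harmless contribution of $\partial_t\bupst$ and the difference quotient of $(\bphi^n)^{-1}$ (and of the corrections), which by the chain rule is controlled by $\frac{\bet_N^n-\bet_N^{n-1}}{\dt}$, i.e.\ by the approximate shell velocity $\bv^*_N$. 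The strong convergence $\bv_N\to\bv=\bv^*$ in $L^2(0,T;L^2(\omega))$ recorded in \eqref{strong_conv} then forces $\partial_t\bupsl_N(\bpsi)\to\partial_t\bups(\bpsi)$ in $L^2(0,T;L^p(\Omega_M))$ for $p<2$; the loss of one power of integrability relative to the gradient statement is precisely the one already present in Proposition~\ref{convergence_test_functions}, and stems from $\partial_t(\bphi^{\bet})^{-1}$ inheriting only the $L^\infty(0,T;L^2(\omega))$ regularity of the limiting shell velocity. Apart from the domain-perturbation continuity of the Bogovskii operator in Step~2, every estimate is routine once the convergences of Section~\ref{sec:StrongConvergence} and Proposition~\ref{convergence_test_functions} are in hand.
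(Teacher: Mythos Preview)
Your proposal is correct and follows essentially the same line as the paper's proof: both rely on the explicit Bogovskii integral formula from \cite{GaldiNS}, the uniform finite decomposition into star-shaped pieces guaranteed by Assumption~\ref{LipschitzAssumption}, strong convergence of the divergence data, and the Sobolev embedding $W^{1,p}(\Omega_M)\hookrightarrow C(\overline{\Omega}_M)$ for $p>3$. The paper's argument is terser---it simply writes the $W^{1,p}$-norm of the difference, invokes Theorem~III.3.3 and Remark~III.3.3 of \cite{GaldiNS}, and declares the right-hand side small---whereas you correctly single out the domain-perturbation continuity of the Bogovskii operator in your Step~2 as the point requiring genuine care; this is exactly the content the paper suppresses, so your write-up is if anything more complete.
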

\proof
Due to the fact that ${\bf v}_N^n, {\bf v},\tilde{\bf v}_N^n,\tilde{\bf v}$ are solutions of equation \eqref{galdi}, 
with the right-hand sides in $L^p(\Omega_M)$, given explicitly by $-\nabla\cdot(\bups_N^n+{\bw}_N^n),-\nabla\cdot(\bups+{\bw}),-\nabla\cdot(\bups_N^n+\tilde{\bw}_N^n)$,
and $-\nabla\cdot(\bups+\tilde{\bw})$,
respectively, we can write their explicit formulas by using Bogowskii construction, see \cite{GaldiNS}.
Theorem~III.3.3 in \cite{GaldiNS}  provides additional regularity of ${\bf v}_N^n, {\bf v},\tilde{\bf v}_N^n,\tilde{\bf v}$: 
\begin{align*}
\|\bups_N(\bpsi)-\bups(\bpsi)\|_{W^{1,p}(\Omega_M)}&=\|\bups_N^n+{\bw}_N^n+{\bf v}_N^n-\bups-{\bw}-{\bf v}\|_{W^{1,p}(\Omega^n)}\\
&+\|\bups_N^n+\tilde{\bw}_N^n+\tilde{\bf v}_N^n-\bups-\tilde{\bw}-\tilde{\bf v}\|_{W^{1,p}(\Omega_M\setminus\Omega^n)}\\
&\leq \|(\bups_N^n +{\bw}_N^n)-(\bups+{\bw})\|_{W^{1,p}(\Omega^n)}+\|{\bf v}_N^n-{\bf v}\|_{W^{1,p}(\Omega^n)}\\
&+\|(\bups_N^n +\tilde{\bw}_N^n)-(\bups+\tilde{\bw})\|_{W^{1,p}(\Omega_M\setminus\Omega^n)}+\|\tilde{\bf v}_N^n-\tilde{\bf v}\|_{W^{1,p}(\Omega_M\setminus\Omega^n)}.
\end{align*}
Due to the 
uniform convergence of $\bups_N\to \bups$ one obtains that the right-hand side tends to 0. Furthermore, using the Sobolev embedding of $W^{1,p}(\Omega_M)$ to $C(\overline{\Omega}_M),$ for $p>3$, we obtain that
\begin{equation*}
\bups_N(\bpsi)\to \bups(\bpsi)\text{  uniformly in } (0,T)\times\Omega_M.
\end{equation*}
Additionally, by using Remark~III.3.3 from \cite{GaldiNS}, we can show that
\begin{align*}
\nabla\bups_N(\bpsi)\to \nabla\bups(\bpsi)&\text{ in } L^2(0,T;L^p(\Omega_M)),\quad p < \infty,\\
\partial_t\bupsl_N(\bpsi)\to \partial_t\bups (\bpsi)&\text{ in } L^2(0,T;L^p(\Omega_M)),\quad p<2.
\end{align*}
This completes the proof.
\qed

\vskip 0.1in
\noindent
{\bf{Approximation of the test functions in $\mathcal{Q}(0,T)$.}}
We now show how the restrictions to $\Omega^{\bet}(t)$ of the test functions $\bups(\bpsi)$ constructed above, 
can be used to construct admissible test functions for the continuous problem. Such test functions will be dense in $\mathcal{Q}(0,T)$. 
These are the test functions which will be used in the proof of convergence to the weak solution, discussed in the next section. 
Similarly, the  restrictions to $\Omega^n$ of the test functions $\bups_N(\bpsi)$ constructed above, will be used to construct 
admissible test functions for the approximate problems, and  to study convergence to a weak solution.

More precisely, for any test function $(\bups,\bpsi,\bxi,\bzeta)\in\mathcal{Q}(0,T),$ the fluid velocity component $\bups$ can be written as $\bups-\bups(\bpsi)+\bups(\bpsi)$,
 where $\bups-\bups(\bpsi)$ can be approximated by a divergence-free function $\bups_0$, which has compact support in $\Omega^{\bet}(t)\cup\Gamma_{in}\cup\Gamma_{out}.$
Therefore, one can easily see that the functions 
$$(\bups,\bpsi,\bxi,\bzeta)=(\bups_0+\bups(\bpsi),\bpsi,\bxi,\bzeta)$$
 are dense in $\mathcal{Q}(0,T)$,
and are such that  $\nabla\cdot\bups=0$. Thus, the test functions $\bups$ are decomposed into two parts. The part $\bups_0$ which collects information 
about the solution in the interior of the fluid domain, and the part $\bups(\bpsi)$ which takes care of the boundary data. 
As we shall see in the next section, we will be working with weak formulations in physical space defined on  the maximal domain $\Omega_M$, 
which is the reason why the appropriate test functions, constructed above, are all defined on $\Omega_M$. 

 The corresponding test functions for approximate problems have the same form, i.e. 
 \begin{equation}\label{test_functions_full}
 (\bups_N,\bpsi,\bxi,\bzeta)=(\bups_0+\bups_N(\bpsi),\bpsi,\bxi,\bzeta).
 \end{equation}
 These functions will be used to study convergence  to a weak solution, defined on $\Omega^{\bet}(t)$.

\subsection{Passing to the limit}
We start by deriving the weak formulation of the coupled, semi-discretized problem, in the form which will be convenient to pass to the limit,
and obtain the weak formulation \eqref{weak} of the continuous, coupled problem. 
\vskip 0.1in
\noindent
{\bf{A weak formulation of the coupled, semi-discretized problem.}}
Let  $(\bups_N,\bpsi,\bxi,\bzeta)$ be the test functions \eqref{test_functions_full} constructed above. 
Use $(\bpsi(t),\bxi(t),\bzeta(t))$ as the test function in the structure subproblem \eqref{sub_struct}, and integrate with respect to $t$ from $n\dt$ to $(n+1)\dt.$ 
Then, take $(\bups_N(t),\bpsi(t))$ as the test functions in the fluid subproblem \eqref{sub_fluid}, and integrate over the same time interval. Add the two equations together to obtain:
\begin{align*}
&\dis \rho_F \int_{n\dt}^{(n+1)\dt}\int_{\Omega^{n+1}}\frac{\bu_N^{n+1}-\buh_N^{n}}{\dt}\cdot \bups_N ^{n+1} + \frac{\rho_F}{2}\int_{n\dt}^{(n+1)\dt}\int_{\Omega^{n+1}}(\nabla\cdot\bs)(\buh_N^n\cdot\bups_N^{n+1})\\
&+ \frac{\rho_F}{2}\int_{n\dt}^{(n+1)\dt}\int_{\Omega^{n+1}}\big[((\buh_N^n-\bs)\cdot\nabla)\bu_N^{n+1}\cdot \bups_N^{n+1}- ((\buh_N^n-\bs)\cdot\nabla)\bups_N^{n+1}\cdot \bu_N^{n+1} \big]\\
&+ 2\mu_F \int_{n\dt}^{(n+1)\dt}\int_{\Omega^{n+1}}\bD(\bu_N^{n+1}):\bD(\bups_N^{n+1})
+\rho_K h \int_{n\dt}^{(n+1)\dt}\int_{\omega}\frac{\bv_N^{n+1}-\bv_N^n}{\dt}\cdot\bpsi R\\
&+\int_{n\dt}^{(n+1)\dt}a_K(\be_N^{n+1},\bpsi)+\rho_S\int_{n\dt}^{(n+1)\dt}\sum_{i=1}^{n_E}A_i\int_{0}^{l_i}\frac{(\bk_N^{n+1})_i-(\bk_N^n)_i}{\dt}\cdot\bxi_i\\
&+\rho_S\int_{n\dt}^{(n+1)\dt}\sum_{i=1}^{n_E}\int_{0}^{l_i}M_i\frac{(\bz_N^{n+1})_i-(\bz_N^n)_i}{\dt}\cdot\bzeta_i
+\int_{n\dt}^{(n+1)\dt}a_S(\bw_N^{n+1},\bzeta)\\
&=\dis\int_{n\dt}^{(n+1)\dt} P_{in}^n\int_{\Gamma_{in}}\upsilon_z - \int_{n\dt}^{(n+1)\dt}P_{out}^n\int_{\Gamma_{out}}\upsilon_z.
\end{align*}
After taking the sum from $n=0,\dots,N-1$ we obtain:
\begin{align*}
&\dis \rho_F\sum_{n=0}^{N-1} \int_{n\dt}^{(n+1)\dt}\int_{\Omega^{n+1}}\frac{\bu_N^{n+1}-\buh_N^{n}}{\dt}\cdot \bups_N^{n+1}
+\frac{\rho_F}{2}\sum_{n=0}^{N-1}\int_{n\dt}^{(n+1)\dt}\int_{\Omega^{n+1}}(\nabla\cdot\bs)(\buh_N^n\cdot\bups_N^{n+1})\\
&+\frac{\rho_F}{2}\sum_{n=0}^{N-1}\int_{n\dt}^{(n+1)\dt}\int_{\Omega^{n+1}}\big[((\buh_N^n-\bs)\cdot\nabla)\bu_N^{n+1}\cdot \bups_N^{n+1}- ((\buh_N^n-\bs)\cdot\nabla)\bups_N^{n+1}\cdot \bu_N^{n+1} \big]\\
&+ 2\mu_F \sum_{n=0}^{N-1}\int_{n\dt}^{(n+1)\dt}\int_{\Omega^{n+1}}\bD(\bu_N^{n+1}):\bD(\bups_N^{n+1})
+\rho_K h \sum_{n=0}^{N-1}\int_{n\dt}^{(n+1)\dt}\int_{\omega}\frac{\bvl_N-\tau_{\dt}\bvl_N}{\dt}\cdot\bpsi R\\
&+\sum_{n=0}^{N-1}\int_{n\dt}^{(n+1)\dt}a_K(\be_N(t),\bpsi)+\rho_S\sum_{n=0}^{N-1}\int_{n\dt}^{(n+1)\dt}\sum_{i=1}^{n_E}A_i\int_{0}^{l_i}\frac{(\bkl_N)_i-\tau_{\dt}(\bkl_N)_i}{\dt}\cdot\bxi_i\\
&+\rho_S\sum_{n=0}^{N-1}\int_{n\dt}^{(n+1)\dt}\sum_{i=1}^{n_E}\int_{0}^{l_i}M_i\frac{(\bzl_N)_i-\tau_{\dt}(\bzl_N)_i}{\dt}\cdot\bzeta_i
+\sum_{n=0}^{N-1}\int_{n\dt}^{(n+1)\dt} a_S(\bw_N(t),\bzeta)\\
&=\dis\sum_{n=0}^{N-1}\int_{n\dt}^{(n+1)\dt} P_{in}^N\int_{\Gamma_{in}}\upsilon_z - \sum_{n=0}^{N-1}\int_{n\dt}^{(n+1)\dt} P_{out}^N\int_{\Gamma_{out}}\upsilon_z,
\end{align*}
where we have used the definition of $\be_N$ and $\bw_N$ as piecewise constant approximations,
defined in \eqref{approx_sol}, and the definition of $\bvl_N, \bkl_N$ an $\bzl_N$ as piecewise linear approximations defined in \eqref{linear_approx}.


 The terms that include the shell and mesh unknowns can be written as
\begin{align*}
&\rho_K h \int_0^T\int_{\omega}\partial_t\bvl_N\cdot\bpsi R
+\int_0^Ta_K(\be_N,\bpsi)+\rho_S\int_0^T\sum_{i=1}^{n_E}A_i\int_{0}^{l_i}\partial_t(\bkl_N)_i\cdot\bxi_i\\
&+\rho_S\int_0^T\sum_{i=1}^{n_E}\int_{0}^{l_i}M_i\partial_t(\bzl_N)_i\cdot\bzeta_i
+\int_0^T a_S(\bw_N,\bzeta),
\end{align*}
and integration by parts with respect to time gives:
\begin{align*}
&-\rho_K h \int_0^T\int_{\omega}\bvl_N\cdot\partial_t\bpsi R -\rho_K h\int_{\omega} \bv_0\cdot\bpsi(0) R
+\int_0^Ta_K(\be_N,\bpsi)\\
&-\rho_S\int_0^T\sum_{i=1}^{n_E}A_i\int_{0}^{l_i}(\bkl_N)_i\cdot\partial_t\bxi_i - \rho_S \sum_{i=1}^{n_E}A_i \int_0^{l_i}\bk_{0i}\cdot\bxi_i(0)\\
&-\rho_S\int_0^T\sum_{i=1}^{n_E}\int_{0}^{l_i}M_i(\bzl_N)_i\cdot\partial_t\bzeta_i- \rho_S \sum_{i=1}^{n_E} \int_0^{l_i} M_i\bz_{0i}\cdot\bzeta_i(0)
+\int_0^T a_S(\bw_N,\bzeta).
\end{align*}

To deal with the fluid part in the weak formulation, we recall the
characteristic functions, introduced in \eqref{char_fun}, which will 
enable us to rewrite the integrals corresponding to the fluid part
over the maximal domain $\Omega_M$.
We set $\chi_N(t,\cdot)=\chi_N^{n+1},$ for $t\in(n\dt,(n+1)\dt],$ and write the integrals over $\Omega^{n+1}$ as:
\begin{align*}
&\rho_F\sum_{n=0}^{N-1} \int_{n\dt}^{(n+1)\dt}\int_{\Omega_M}\chi_N^{n+1}\frac{\bu_N^{n+1}-\buh_N^{n}}{\dt}\cdot \bups_N^{n+1}
+\frac{\rho_F}{2}\sum_{n=0}^{N-1}\int_{n\dt}^{(n+1)\dt}\int_{\Omega_M}\chi_N^{n+1}(\nabla\cdot\bs)(\buh_N^n\cdot\bups_N^{n+1})\\
&+\frac{\rho_F}{2}\sum_{n=0}^{N-1}\int_{n\dt}^{(n+1)\dt}\int_{\Omega_M}\chi_N^{n+1}\big[((\buh_N^n-\bs)\cdot\nabla)\bu_N^{n+1}\cdot\bups_N^{n+1}
- ((\buh_N^n-\bs)\cdot\nabla)\bups_N^{n+1}\cdot \bu_N^{n+1} \big]\\
&+ 2\mu_F \sum_{n=0}^{N-1}\int_{n\dt}^{(n+1)\dt}\int_{\Omega_M}\chi_N^{n+1}\bD(\bu_N^{n+1}):\bD(\bups_N^{n+1}).
\end{align*}
We simplify (rewrite) each term separately.

In the first term we add and subtract $\bu_N^n$ from the numerator to obtain:
\begin{equation}\label{limit1}
\dis \rho_F\sum_{n=0}^{N-1} \int_{n\dt}^{(n+1)\dt}\int_{\Omega_M}\frac{\bu_N^{n+1}-\bu_N^{n}}{\dt}\cdot \chi_N^{n+1}\bups_N^{n+1}
+ \rho_F\sum_{n=0}^{N-1} \int_{n\dt}^{(n+1)\dt}\int_{\Omega_M}\frac{\bu_N^{n}-\buh_N^{n}}{\dt}\cdot \chi_N^{n+1}\bups_N^{n+1}.
\end{equation}
We now use the summation by parts formula (discrete analogue of the integration by parts formula) to take care of the first term in \eqref{limit1}:
\begin{align*}
&\rho_F\sum_{n=0}^{N-1} \int_{n\dt}^{(n+1)\dt}\int_{\Omega_M}\frac{\bu_N^{n+1}-\bu_N^{n}}{\dt}\cdot \chi_N^{n+1}\bups_N^{n+1} =
\rho_F\int_{\Omega_M}\bu_N^N\cdot\chi_N^N\bups_N^N - \rho_F\int_{\Omega_M}\bu_N^0\cdot\chi_N^1\bups_N^1\\ 
&\quad-\rho_F\sum_{n=1}^{N-1}\int_{n\dt}^{(n+1)\dt}\int_{\Omega_M}\frac{1}{\dt}\bu_N^n\cdot(\chi_N^{n+1}\bups_N^{n+1}- \chi_N^n\bups_N^n)\\
&= - \rho_F\int_{\Omega_M}\bu_N^0\cdot\chi_N^1\bups_N^1 -\rho_F
\sum_{n=1}^{N-1}\int_{n\dt}^{(n+1)\dt}\int_{\Omega_M}\frac{1}{\dt}\bu_N^n\cdot\chi_N^{n+1}(\bups_N^{n+1}-\bups_N^n)\\
&\quad -\rho_F\sum_{n=1}^{N-1}\int_{n\dt}^{(n+1)\dt}\int_{\Omega_M}\frac{1}{\dt}\bu_N^n\cdot(\chi_N^{n+1}-\chi_N^n)\bups_N^n\\
&= - \rho_F\int_{\Omega_M}\chi_N^1\bu_N^0\cdot\bups_N^1 -\rho_F \int_0^T\int_{\Omega_M}\chi_N\tau_{\dt}\bu_N\cdot\partial_t\bups_N\\
&\quad -\rho_F\sum_{n=1}^{N-1}\int_{n\dt}^{(n+1)\dt}\int_{\omega}\frac{R}{\dt}\int_{R+\et_N^{n}}^{R+\et_N^{n+1}}\bu_N^n\cdot\bups_N^n\\
&= - \rho_F\int_{\Omega_M}\chi_N^1\bu_N^0\cdot\bups_N^1 - \rho_F\int_0^T\int_{\Omega_M}\chi_N\tau_{\dt}\bu_N\cdot\partial_t\bups_N -\rho_F\int_0^T\int_{\omega}\partial_t\et_NR(\tau_{\dt}\bu_N\cdot\tau_{\dt}\bups_N).
\end{align*}
Notice that in the last equality we used the mean value theorem for integrals.

To deal with the second term in \eqref{limit1}, we recall that $\buh_N^n$ was defined 
in \eqref{ale_comp_fun} as a composition of $\bu_N^n$ and $\bA,$ and calculate:
\begin{align*}
&\rho_F\sum_{n=0}^{N-1} \int_{n\dt}^{(n+1)\dt}\int_{\Omega_M}\frac{\bu_N^{n}-\buh_N^{n}}{\dt}\cdot \chi_N^{n+1}\bups_N^{n+1}\\
&=\rho_F\sum_{n=0}^{N-1} \int_{n\dt}^{(n+1)\dt}\int_{\Omega_M}\frac{1}{\dt}\left(\bu_N^n(z,r,\theta)-\bu_N^n(z,\frac{R+\et_N^n}{R+\et_N^{n+1}}r,\theta)\right)\cdot\chi_N^{n+1}\bups_N^{n+1}\\
&= \rho_F\sum_{n=0}^{N-1} \int_{n\dt}^{(n+1)\dt}\int_{\Omega_M}\frac{1}{\dt} 
\left( (\nabla\bu_N^n)\frac{\et_N^{n+1}-\et_N^{n}}{R+\et_N^{n+1}}r\mathbf{e}_r \right)\cdot\chi_N^{n+1}\bups_N^{n+1}\\
&= \rho_F\sum_{n=0}^{N-1} \int_{n\dt}^{(n+1)\dt}\int_{\Omega_M} (\nabla\bu_N^n)\bs\cdot\chi_N^{n+1}\bups_N^{n+1}\\
&=\rho_F\sum_{n=0}^{N-1} \int_{n\dt}^{(n+1)\dt}\int_{\Omega_M} (\bs\cdot\nabla)\bu_N^n\cdot\chi_N^{n+1}\bups_N^{n+1}\\
&=\rho_F \int_0^T\int_{\Omega_M}(\mathbf{s}_N\cdot\nabla)\tau_{\dt}\bu_N\cdot\chi_N\bups_N\\
&=\rho_F \int_0^T\int_{\Omega_M}\chi_N(\mathbf{s}_N\cdot\nabla)\tau_{\dt}\bu_N\cdot\bups_N,
\end{align*}
where
\begin{equation*}
\mathbf{s}_N=\frac{\et_N-\tau_{\dt}\et_N}{\dt(R+\et_N)}r\mathbf{e}_r
=\frac{\partial_t\et_N}{R+\et_N}r\mathbf{e}_r.
\end{equation*}
\noindent
We rewrite the convective part in the following way:
\begin{align}\label{limit2}
\begin{split}
&\frac{\rho_F}{2} \sum_{n=0}^{N-1} \int_{n\dt}^{(n+1)\dt}\int_{\Omega_M}\chi_N^{n+1}\left[(\buh_N^n\cdot\nabla)\bu_N^{n+1}\cdot \bups_N^{n+1} - (\buh_N^n\cdot\nabla)\bups_N^{n+1}\cdot \bu_N^{n+1}  \right]\\
&+\frac{\rho_F}{2} \sum_{n=0}^{N-1} \int_{n\dt}^{(n+1)\dt}\int_{\Omega_M}\chi_N^{n+1}\left[ (\bs\cdot\nabla)\bups_N^{n+1}\cdot \bu_N^{n+1}-(\bs\cdot\nabla)\bu_N^{n+1}\cdot \bups_N^{n+1}\right].
\end{split}
\end{align}
Furthermore, we calculate:
\begin{align*}
&\int_{\Omega_M}\chi_N^{n+1}(\bs\cdot\nabla)\bups_N^{n+1}\cdot \bu_N^{n+1}=\int_{\Gamma_M}\chi_N^{n+1}(\bs\cdot\bn)\bups_N^{n+1}\cdot \bu_N^{n+1}\\
&- \int_{\Omega_M}\chi_N^{n+1}(\nabla\cdot\bs)\bups_N^{n+1}\cdot \bu_N^{n+1}- \int_{\Omega_M}\chi_N^{n+1}(\bs\cdot\nabla)\bu_N^{n+1}\cdot \bups_N^{n+1}.
\end{align*}
By using the definition of $\bs$, given in \eqref{sn}, the boundary term can be rewritten as follows:
\begin{align*}
\int_{\Gamma_M} &\chi_N^{n+1}(\bs\cdot\bn)\bups_N^{n+1}\cdot \bu_N^{n+1}
=\int_{\Gamma^{n+1}}\left(\frac{\et_N^{n+1}-\et_N^n}{\dt(R+\et_N^{n+1})}r\mathbf{e}_r\cdot\bn\right)\bups_N^{n+1}\cdot \bu_N^{n+1}\\
&=\int_{\Gamma}\left(\frac{\et_N^{n+1}-\et_N^n}{\dt}\right)\bups_N^{n+1}\cdot\bu_N^{n+1}
=\int_{\omega}\left(\frac{\et_N^{n+1}-\et_N^n}{\dt}R\right)\bups_N^{n+1}\cdot\bu_N^{n+1}.
\end{align*}
By inserting the previous calculations into \eqref{limit2}, we obtain that the convective term is equal to:
\begin{align*}
&\frac{\rho_F}{2}\int_0^T\int_{\Omega_M} \chi_N\left[(\tau_{\dt}\buh_N\cdot\nabla)\bu_N\cdot \bups_N - (\tau_{\dt}\buh_N\cdot\nabla)\bups_N\cdot \bu_N\right]\\
&\quad+ \frac{\rho_F}{2}\int_0^T\int_{\omega}\partial_t\et_N R\bups_N\cdot\bu_N -\frac{\rho_F}{2}\int_0^T\int_{\Omega_M}\chi_N (\nabla\cdot\mathbf{s}_N)\bups_N\cdot\bu_N\\
&\quad - \rho_F \int_0^T\int_{\Omega_M}\chi_N(\mathbf{s}_N\cdot\nabla)\bu_N\cdot\bups_N.
\end{align*}
In summary, the fluid portion of the coupled, semi-discretized weak formulation now reads:
\begin{align*}
&\rho_F\sum_{n=0}^{N-1} \int_{n\dt}^{(n+1)\dt}\int_{\Omega_M}\frac{\bu_N^{n+1}-\buh_N^{n}}{\dt}\cdot \chi_N^{n+1}\bups_N^{n+1}
+\frac{\rho_F}{2}\sum_{n=0}^{N-1}\int_{n\dt}^{(n+1)\dt}\int_{\Omega_M}\chi_N^{n+1}(\nabla\cdot\bs)(\buh_N^n\cdot\bups_N^{n+1})\\
&+\frac{\rho_F}{2}\sum_{n=0}^{N-1}\int_{n\dt}^{(n+1)\dt}\int_{\Omega_M}\chi_N^{n+1}\big[((\buh_N^n-\bs)\cdot\nabla)\bu_N^{n+1}\cdot\bups_N^{n+1}- ((\buh_N^n-\bs)\cdot\nabla)\bups_N^{n+1}\cdot \bu_N^{n+1} \big]\\
& + 2\mu_F \sum_{n=0}^{N-1}\int_{n\dt}^{(n+1)\dt}\int_{\Omega_M}\chi_N^{n+1}\bD(\bu_N^{n+1}):\bD(\bups_N^{n+1})\\
&= - \rho_F\int_{\Omega_M}\chi_N^1\bu_N^0\cdot\bups_N^1 - \rho_F \int_0^T\int_{\Omega_M}\chi_N\tau_{\dt}\bu_N\cdot\partial_t\bups_N -\rho_F\int_0^T\int_{\omega}\partial_t\et_NR(\tau_{\dt}\bu_N\cdot\tau_{\dt}\bups_N)\\
&+\rho_F\int_0^T\int_{\Omega_M}\chi_N(\mathbf{s}_N\cdot\nabla)
\tau_{\dt}\bu_N\cdot\bups_N
+\frac{\rho_F}{2}\int_0^T\int_{\Omega_M}\chi_N(\nabla\cdot\mathbf{s}_N)\tau_{\dt}\buh_N\cdot\bups_N\\
&+\frac{\rho_F}{2}\int_0^T\int_{\Omega_M} \chi_N \left[(\tau_{\dt}\buh_N\cdot\nabla)\bu_N\cdot \bups_N - (\tau_{\dt}\buh_N\cdot\nabla)\bups_N\cdot \bu_N\right]\\
&+ \frac{\rho_F}{2}\int_0^T\int_{\omega}\partial_t\et_NR\bups_N\cdot\bu_N -\frac{\rho_F}{2}\int_0^T\int_{\Omega_M}\chi_N (\nabla\cdot\mathbf{s}_N)\bups_N\cdot\bu_N\\
&- \rho_F \int_0^T\int_{\Omega_M}\chi_N(\mathbf{s}_N\cdot\nabla)\bu_N\cdot\bups_N+ 2\mu_F \int_{0}^{T}\int_{\Omega_M}\chi_N\bD(\bu_N):\bD(\bups_N).
\end{align*}

By combining the fluid and structure part of the weak formulations, calculated above, we obtain the following result:
\begin{prop}\label{weak_form_discretize}
Let  $(\bups_N,\bpsi,\bxi,\bzeta)$ be the test functions \eqref{test_functions_full} constructed above. 
Then, weak solutions $(\bu_N, \boldsymbol\eta_N,\bd_N,\bw_N)$ of the
semi-discretized, coupled problem, stated in Sec.~\ref{splitting}, satisfy the following weak formulation:
\begin{align*}
& - \rho_F \int_0^T\int_{\Omega_M}\chi_N\tau_{\dt}\bu_N\cdot\partial_t\bups_N -\rho_F\int_0^T\int_{\omega}\partial_t\et_NR(\tau_{\dt}\bu_N\cdot\tau_{\dt}\bups_N)\\
&+\rho_F\int_0^T\int_{\Omega_M}\chi_N(\mathbf{s}_N\cdot\nabla)
\tau_{\dt}\bu_N\cdot\bups_N
+\frac{\rho_F}{2}\int_0^T\int_{\Omega_M}\chi_N(\nabla\cdot\mathbf{s}_N)\tau_{\dt}\buh_N\cdot\bups_N\\
&+\frac{\rho_F}{2}\int_0^T\int_{\Omega_M} \chi_N \left[(\tau_{\dt}\buh_N\cdot\nabla)\bu_N\cdot \bups_N - (\tau_{\dt}\buh_N\cdot\nabla)\bups_N\cdot \bu_N\right]\\
&+ \frac{\rho_F}{2}\int_0^T\int_{\omega}\partial_t\et_NR\bups_N\cdot\bu_N -\frac{\rho_F}{2}\int_0^T\int_{\Omega_M}\chi_N (\nabla\cdot\mathbf{s}_N)\bups_N\cdot\bu_N\\
&- \rho_F \int_0^T\int_{\Omega_M}\chi_N(\mathbf{s}_N\cdot\nabla)\bu_N\cdot\bups_N+ 2\mu_F \int_{0}^{T}\int_{\Omega_M}\chi_N\bD(\bu_N):\bD(\bups_N)\\
&-\rho_K h \int_0^T\int_{\omega}\bvl_N\cdot\partial_t\bpsi R 
+\int_0^Ta_K(\be_N,\bpsi) +\int_0^T a_S(\bw_N,\bzeta)\\
&-\rho_S\int_0^T\sum_{i=1}^{n_E}A_i\int_{0}^{l_i}(\bkl_N)_i\cdot\partial_t\bxi_i
-\rho_S\int_0^T\sum_{i=1}^{n_E}\int_{0}^{l_i}M_i(\bzl_N)_i\cdot\partial_t\bzeta_i
\\
& - \rho_S \sum_{i=1}^{n_E}A_i \int_0^{l_i}\bk_{0i}\cdot\bxi_i(0) - \rho_S \sum_{i=1}^{n_E} \int_0^{l_i} M_i\bz_{0i}\cdot\bzeta_i(0)
\\
&-\rho_K h\int_{\omega} \bv_0\cdot\bpsi(0) R
-\rho_F\int_{\Omega_M}\bu_N^0\chi_N^1\cdot\bups_N^1
\\
&=\int_0^T P_{in}^N(t) \int_{\Gamma_{in}}\upsilon_z- \int_0^T P_{out}^N(t)\int_{\Gamma_{out}}\upsilon_z, \quad \forall (\bups_N,\bpsi,\bxi,\bzeta),
\end{align*}
where
$$\bvl_N = \displaystyle{\frac{\tau_{\Delta t} \boldsymbol\eta_N - \boldsymbol\eta_N}{\Delta t}},\ 
 \bkl_N = \displaystyle{\frac{\tau_{\Delta t} \bd_N - \bd_N}{\Delta t}}, \ 
  \bzl_N = \displaystyle{\frac{\tau_{\Delta t} \bw_N - \bw_N}{\Delta t}}.
$$
\end{prop}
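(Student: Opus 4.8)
The plan is to obtain the stated identity by assembling the reformulations carried out in the paragraphs preceding the statement; once those are in place the argument is essentially a bookkeeping exercise. First I would test the structure subproblem \eqref{sub_struct} with $(\bpsi(t),\bxi(t),\bzeta(t))$ and the fluid subproblem \eqref{sub_fluid} with $(\bups_N(t),\bpsi(t))$, integrate each over $(n\dt,(n+1)\dt)$, add them, and sum over $n=0,\dots,N-1$. Both subproblems carry the shell-inertia term, but with mismatched ``initial data'' ($\bv^{n+1/2},\bv^n$ in the structure step, $\bv^{n+1},\bv^{n+1/2}$ in the fluid step), so adding them cancels the intermediate velocity and leaves the discrete time-difference of the shell displacement; the analogous cancellations occur for the mesh unknowns, and since $\be_N,\bw_N$ are constant on each subinterval the elastic forms $a_K(\be_N^{n+1},\bpsi)$, $a_S(\bw_N^{n+1},\bzeta)$ can be written as $a_K(\be_N(t),\bpsi)$, $a_S(\bw_N(t),\bzeta)$.

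For the structure part I would integrate the discrete time-difference terms by parts in time. Because $\bpsi,\bxi,\bzeta\in C_c^1([0,T);\cdot)$, the terminal term at $t=T$ drops, and the boundary term at $t=0$ yields exactly the data contributions $-\rho_K h\int_\omega\bv_0\cdot\bpsi(0)R$, $-\rho_S\sum_iA_i\int_0^{l_i}\bk_{0i}\cdot\bxi_i(0)$, $-\rho_S\sum_i\int_0^{l_i}M_i\bz_{0i}\cdot\bzeta_i(0)$, leaving the $-\int_0^T$ terms tested against $\partial_t\bpsi,\partial_t\bxi,\partial_t\bzeta$ with $\bvl_N,\bkl_N,\bzl_N$ the difference quotients written in the statement.

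The fluid part is the substantive step. Using the characteristic functions $\chi_N$ from \eqref{char_fun}, every integral over $\Omega^{n+1}$ is rewritten as $\int_{\Omega_M}\chi_N^{n+1}(\cdot)$. In the discrete time-derivative term I add and subtract $\bu_N^n$ as in \eqref{limit1}: the true difference $\bu_N^{n+1}-\bu_N^n$ is handled by discrete summation by parts, which produces $-\rho_F\int_0^T\int_{\Omega_M}\chi_N\tau_{\dt}\bu_N\cdot\partial_t\bups_N$, the initial term $-\rho_F\int_{\Omega_M}\chi_N^1\bu_N^0\cdot\bups_N^1$, and — from the jump of $\chi_N$ between consecutive steps, evaluated via the mean value theorem over the radial shell $(R+\et_N^n,R+\et_N^{n+1})$ — the term $-\rho_F\int_0^T\int_\omega\partial_t\et_NR(\tau_{\dt}\bu_N\cdot\tau_{\dt}\bups_N)$; the remaining piece $\bu_N^n-\buh_N^n$, expanded to first order in the radial variable, reconstructs the ALE advection with velocity $\bs_N=\partial_t\et_N(R+\et_N)^{-1}r\mathbf{e}_r$. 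The skew-symmetric convective trilinear form together with the $\nabla\cdot\bs$ term is then rewritten by integrating by parts over $\Omega_M$: the surface integral over $\Gamma_M$, restricted to $\Gamma^{n+1}$ and simplified through the explicit formula \eqref{sn}, collapses to $\frac{\rho_F}{2}\int_0^T\int_\omega\partial_t\et_NR\,\bups_N\cdot\bu_N$, the volume terms regroup into the $\chi_N$-weighted skew form plus the $(\nabla\cdot\bs_N)$ corrections appearing in the statement, the viscous term becomes $2\mu_F\int_0^T\int_{\Omega_M}\chi_N\bD(\bu_N):\bD(\bups_N)$, and the inlet/outlet pressure terms are carried along unchanged.

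The main obstacle is not any deep estimate but the careful accounting of boundary contributions. One must keep the piecewise-constant test function $\bups_N$ distinct from its piecewise-linear interpolant $\bupsl_N$, since it is $\partial_t\bupsl_N$ (and not the distributional $\partial_t\bups_N$) that legitimately appears in the time-derivative terms; one uses that the terminal summation-by-parts term vanishes only by virtue of compact support in $[0,T)$; and one must verify that the three sources of surface terms — integration by parts over $\Gamma_M$, the jumps of $\chi_N$, and the $\bu_N^n-\buh_N^n$ expansion — combine with the correct signs into the two $\int_\omega\partial_t\et_NR(\cdots)$ terms of the identity, which relies on the earlier computations $\nabla\cdot\bs=2\Delta\eta^{n+1,n}/\dt$ and $\bS-(1-\Delta\eta^{n+1,n})^2=0$.
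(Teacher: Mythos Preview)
Your proposal is correct and follows essentially the same route as the paper: test the two subproblems, add and sum over $n$, integrate the structure time-differences by parts, rewrite the fluid integrals over $\Omega_M$ via $\chi_N$, split the discrete time derivative as in \eqref{limit1}, handle $\bu_N^{n+1}-\bu_N^n$ by summation by parts (picking up the $\chi_N^{n+1}-\chi_N^n$ shell term via the mean value theorem), expand $\bu_N^n-\buh_N^n$ to recover $(\bs_N\cdot\nabla)\tau_{\dt}\bu_N$, and integrate the $\bs$-part of the convective term by parts to produce the $\int_\omega\partial_t\et_N R\,\bups_N\cdot\bu_N$ boundary contribution. One small remark: the identity $\bS-(1-\Delta\eta^{n+1,n})^2=0$ is used in the discrete energy estimate, not in this derivation, so you can drop that reference; the only ALE algebra actually needed here is the explicit formula \eqref{sn} for $\bs$ and the resulting $\nabla\cdot\bs$.
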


\vskip 0.1in
\noindent
{\bf{Passing to the limit.}}
Using the strong convergence results summarized in \eqref{strong_conv},
we can pass to the limit in all the terms to obtain:
\begin{align*}
&-\rho_F\int_{\Omega}\bu_0\cdot\bups(0)-\rho_F\int_0^T\int_{\Omega_M}\chi \bu\cdot\partial_t\bups - \rho_F\int_0^T\int_{\omega}\partial_t\et R (\bu\cdot\bups)\\
&+\rho_F\int_0^T\int_{\Omega_M}\chi(\frac{\partial_t\et}{R+\et}r\mathbf{e}_r\cdot\nabla)
\bu\cdot\bups
+\frac{\rho_F}{2}\int_0^T\int_{\Omega_M}\chi(\nabla\cdot\frac{\partial_t\et}{R+\et}r\mathbf{e}_r)\bu\cdot\bups\\
&+\frac{\rho_F}{2}\int_0^T\int_{\Omega_M} \chi \left[(\bu\cdot\nabla)\bu\cdot \bups - (\bu\cdot\nabla)\bups\cdot \bu\right]\\
&+ \frac{\rho_F}{2}\int_0^T\int_{\omega}\partial_t\et R\bups\cdot\bu -\frac{\rho_F}{2}\int_0^T\int_{\Omega_M}\chi (\nabla\cdot\frac{\partial_t\et}{R+\et}r\mathbf{e}_r)\bups\cdot\bu\\
&- \rho_F \int_0^T\int_{\Omega_M}\chi(\frac{\partial_t\et}{R+\et}r\mathbf{e}_r\cdot\nabla)\bu\cdot\bups+ 2\mu_F \int_{0}^{T}\int_{\Omega_M}\chi\bD(\bu):\bD(\bups)\\
&-\rho_K h \int_0^T\int_{\omega}\bv\cdot\partial_t\bpsi R -\rho_K h\int_{\omega} \bv_0\cdot\bpsi(0) R
+\int_0^Ta_K(\be,\bpsi)\\
&-\rho_S\int_0^T\sum_{i=1}^{n_E}A_i\int_{0}^{l_i}\bk_i\cdot\partial_t\bxi_i - \rho_S \sum_{i=1}^{n_E}A_i \int_0^{l_i}\bk_{0i}\cdot\bxi_i(0)\\
&-\rho_S\int_0^T\sum_{i=1}^{n_E}\int_{0}^{l_i}M_i \bz_i\cdot\partial_t\bzeta_i- \rho_S \sum_{i=1}^{n_E} \int_0^{l_i} M_i\bz_{0i}\cdot\bzeta_i(0)
+\int_0^T a_S(\bw,\bzeta)\\
&=\int_0^T P_{in}(t) \int_{\Gamma_{in}}\upsilon_z- \int_0^T P_{out}(t)\int_{\Gamma_{out}}\upsilon_z.
\end{align*}
Using the definition of the characteristic function $\chi,$ we write the weak formulation on the physical domain $\Omega^{\be}(t):$
\begin{align*}
&-\rho_F\int_0^T\int_{\Omega^{\be}(t)} \bu\cdot\partial_t\bups 
- \frac{\rho_F}{2}\int_0^T\int_{\omega}\partial_t\et R (\bu\cdot\bups)
+\frac{\rho_F}{2}\int_0^T\int_{\Omega^{\be}(t)} b(t,\bu,\bu,\bups)\\
&+ 2\mu_F \int_{0}^{T}\int_{\Omega^{\be}(t)}\bD(\bu):\bD(\bups)
-\rho_K h \int_0^T\int_{\omega}\bv\cdot\partial_t\bpsi R 
+\int_0^Ta_K(\be,\bpsi)\\
&-\rho_S\int_0^T\sum_{i=1}^{n_E}A_i\int_{0}^{l_i}\bk_i\cdot\partial_t\bxi_i  -\rho_S\int_0^T\sum_{i=1}^{n_E}\int_{0}^{l_i}M_i \bz_i\cdot\partial_t\bzeta_i
+\int_0^T a_S(\bw,\bzeta)\\
&=\int_0^T P_{in}(t) \int_{\Gamma_{in}}\upsilon_z- \int_0^T P_{out}(t)\int_{\Gamma_{out}}\upsilon_z
+ \rho_F\int_{\Omega}\bu_0\cdot\bups(0)\\ 
&+ \rho_K h\int_{\omega} \bv_0\cdot\bpsi(0) R + \rho_S \sum_{i=1}^{n_E}A_i \int_0^{l_i}\bk_{0i}\cdot\bxi_i(0) + \rho_S \sum_{i=1}^{n_E} \int_0^{l_i} M_i\bz_{0i}\cdot\bzeta_i(0).
\end{align*}
To see that we obtained exactly the weak formulation \eqref{weak}, we have to rewrite the second term from the right-hand side, i.e. $\dis\int_0^T\int_{\omega}\partial_t\et R (\bu\cdot\bups).$
Using the fact that $\et(t,\zt,\tht)=\eta_r(t,z,\theta),$ it is easy to see that the following equality holds true:
\begin{equation*}
\partial_t\et=\partial_t\eta_r - \frac{\partial_z\eta_r}{1+\partial_z\eta_z}\cdot \partial_t\eta_z - \frac{\partial_\theta\eta_r}{1+\partial_\theta\eta_\theta}\cdot \partial_t\eta_\theta.
\end{equation*}
Additionally, the outer normal $\bn_1$ on $\Gamma^{\bet}(t)$ is equal to $\dis(-\partial_{\zt}\et,1,-\partial_{\tht}\et),$  
and after rewriting it in the "original" coordinates, we obtain that the outer normal is equal to
$\dis\left(-\frac{\partial_z\eta_r}{1+\partial_z\eta_z},1,-\frac{\partial_\theta\eta_r}{1+\partial_\theta\eta_\theta} \right).$
This yields that on $\Gamma^{\be}(t)$ we have:
\begin{equation}\label{normal}
\partial_t\et = \partial_t\be \cdot\bn_1.
\end{equation}
Finally, using \eqref{normal} and the kinematic coupling condition on $\Gamma^{\be}(t),$ we obtain:
\begin{align*}
\int_0^T\int_{\omega}\partial_t\et R (\bu\cdot\bups)&= \int_0^T\int_{\Gamma}\partial_t\et  (\bu\cdot\bups)
= \int_0^T\int_{\Gamma^{\be}(t)}(\partial_t\be\cdot \bn_1)J^{-1}  (\bu\cdot\bups)\\
&= \int_0^T\int_{\Gamma^{\be}(t)}(\partial_t\be\cdot \bn)(\bu\cdot\bups)
=\int_0^T\int_{\Gamma^{\be}(t)}(\bu\cdot\bn)(\bu\cdot\bups),
\end{align*}
where $J=\|\bn_1\|$ is the Jacobian of the transformation from $\Gamma$ to $\Gamma^{\be}(t),$ and $\bn$ is the outer unit normal on $\Gamma^{\be}(t).$
Thus, we have shown that the limiting functions satisfy the weak form \eqref{weak}.

This proves the following main result of this work:
\begin{theorem}[{\bf{Main result}}] \label{thm:exist}
	Let $\bu_0\in L^2(\Omega^{\be}(t))$, $\be_0\in H^1(\omega)$, $\bv_0\in L^2(R;\omega)$, $(\bd_0,\bw_0)\in V_S$, $(\bk_0,\bz_0)\in L^2(\mathcal{N};\R^6)$ 
	be such that
	\begin{equation*}
	\nabla\cdot\bu_0=0,\quad \bu_0|_{\Gamma^{\be_0}}\cdot \bn^{\be_0}=\bv_0\cdot \bn^{\be_0},\quad  \be_0\circ\bpi=\bd_0, 
	\end{equation*}
	and let  $P_{in/out}\in L^2_{loc}(0,\infty)$. 
	Furthermore, let  all the physical constants be positive: $\rho_K,\rho_S,\rho_F,\lambda,\mu,\mu_F>0$ and $A_i>0, \forall i = 1,\dots,n_E$. 
	
	Assume that the uniform Lipschitz property specified in Assumption~\ref{LipschitzAssumption} holds, and that the subgraph property specified in Assumption~\ref{subgraph} holds.
	Then, for every $t \le T$, where $T$ is the maximal time for which the subgraph property holds,
	 there exists a weak solution to problem \eqref{FSIeq1}-\eqref{FSIeqIC} satisfying the weak formulation
	\eqref{weak}.
\end{theorem}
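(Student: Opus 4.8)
The plan is to prove existence by the constructive time-discretization strategy already assembled in Sections~\ref{splitting}--\ref{sec:StrongConvergence}: namely, to build a sequence of approximate solutions via the Lie operator splitting scheme combined with the ALE formulation, establish $\dt$-uniform energy bounds, extract weakly/weakly$^*$ convergent subsequences, upgrade the convergence of the fluid and structure velocities to strong convergence via a compactness argument adapted to moving domains, and finally pass to the limit in the approximate weak formulations to recover \eqref{weak}. Concretely, for $\dt=T/N$ I would, on each interval $(t_n,t_{n+1})$, first solve the semi-discretized structure subproblem \eqref{sub_struct} and then the semi-discretized fluid subproblem \eqref{sub_fluid}, the two communicating only through the initial data; each is a linear elliptic problem, so Lax--Milgram yields unique solutions, and the staggered inclusion of the shell inertia in the fluid step produces the discrete energy identities \eqref{energy_struct}--\eqref{energy_fluid}. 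Summing over $n$ gives the $\dt$-uniform estimates of Theorem~\ref{thm:unif}, which hold precisely under Assumption~\ref{subgraph} and the injectivity of $\bg$, both guaranteed here by the regularity Assumption~\ref{LipschitzAssumption}. Interpolating the discrete values in time by the piecewise constant functions \eqref{approx_sol} and the piecewise linear functions in \eqref{linear_approx}, Theorem~\ref{thm:unif} then gives uniform bounds in the relevant spaces, hence (Lemma~\ref{lemma:weak_conv}) weakly/weakly$^*$ convergent subsequences with limits $(\bup,\be,\bd,\bw,\bv,\bk,\bz)$ and $\bv=\bv^*$.

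The technical heart is the strong convergence. First, Assumption~\ref{LipschitzAssumption} implies, by the argument of Theorem~5.5-1 in \cite{CiarletVol1}, that $\id+\be_N$ is injective and uniformly bi-Lipschitz, hence (Proposition~\ref{bi-Lipschitz}) so is the reparameterized displacement $\bet_N$; this makes the Korn constant on $\Omega^{n+1}$ uniform in $N,n$ by Lemma~1 in \cite{Velcic}, yielding uniform boundedness of $\chi_N\nabla\bup_N$ and therefore the weak convergence \eqref{grad_conv}. Strong convergence of $(\bu_N,\bv_N,\bk_N,\bz_N)$ in $L^2(0,T;H)$ then follows from the generalized Aubin--Lions--Simon result, Theorem~\ref{thm:compact}: in Theorem~\ref{thm:strong} one verifies hypothesis (A) from the energy estimates and the trace theorem, (B) by testing the discrete weak formulation \eqref{discrete_weak} and estimating the ALE time-difference $\buh^n-\bu^n$ on the symmetric difference of consecutive fluid domains, and (C), the smooth-in-time dependence of the moving function spaces, via common test and solution spaces over a local maximal domain. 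I expect the genuinely delicate points to be the divergence-free extension of $\bu\in V_F^n$ to $\Omega_M$ with controlled norm (Lemma~\ref{lemma:div}) --- which needs the uniform Lipschitz property to decompose $\Omega_M\setminus\Omega^n$ into a number of star-shaped pieces independent of $n$, followed by a Bogovskii-type correction from Theorem~III.3.1 of \cite{GaldiNS} --- and the uniform Ehrling property \eqref{ehrling}, proved by a contradiction/compactness argument exploiting the density of the common test spaces in $H$ and the uniform convergence of $\bet_N$.

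Finally, I would pass to the limit. Since the admissible velocity test functions depend on $N$ through the moving domain, I would first build, as in the preceding section, divergence-free test functions $\bups_N(\bpsi)$ on $\Omega_M$ of the form \eqref{test_functions_full} by extending the Koiter test function $\bpsi$ to $\Omega_M$, transporting it by $(\bphi^n)^{-1}$ and $(\tilde{\bphi}^n)^{-1}$, and correcting to be divergence-free; Proposition~\ref{convergence_test_functions} and its successor then give uniform convergence $\bups_N(\bpsi)\to\bups(\bpsi)$ and convergence of the relevant gradients and time derivatives. Rewriting the coupled semi-discrete weak formulation over $\Omega_M$ with the characteristic functions $\chi_N$ (Proposition~\ref{weak_form_discretize}), I would pass to the limit $N\to\infty$ term by term: the linear fluid and structure contributions by the weak/weak$^*$ convergences and the uniform test-function convergence; the convective and ALE-advection terms by the strong $L^2(0,T;L^2(\Omega_M))$ convergence of $\bu_N$ and $\tau_{\dt}\buh_N$ together with \eqref{grad_conv}, the uniform convergence of $\bups_N$, and $\chi_N\to\chi$, $\mathbf{s}_N\to\frac{\partial_t\et}{R+\et}r\,\mathbf{e}_r$ from the uniform convergence of $\bet_N$; and the boundary pressure terms since $P_{in/out}^N\to P_{in/out}$ in $L^2$. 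Returning to the physical domain $\Omega^{\be}(t)$ via $\chi$, and using the identity $\partial_t\et=\partial_t\be\cdot\bn_1$ from \eqref{normal} with the kinematic coupling condition to convert the interface term, the limit is shown to satisfy exactly \eqref{weak}. The limiting tuple lies in $\mathcal{V}(0,T)$ because the strong convergences are compatible with the divergence-free and kinematic coupling constraints, and the prescribed initial data are recovered from the $n=0$ boundary terms; this produces a weak solution of \eqref{FSIeq1}--\eqref{FSIeqIC} on $[0,T]$, with $T$ the maximal time for which Assumption~\ref{subgraph} holds. The main obstacles I anticipate are the divergence-free extension and correction constructions and the uniform Ehrling property, both of which hinge on the uniform (bi-)Lipschitz bound of Assumption~\ref{LipschitzAssumption}.
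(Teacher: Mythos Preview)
Your proposal is correct and follows essentially the same approach as the paper: the constructive Lie-splitting/ALE time discretization, the uniform energy estimates of Theorem~\ref{thm:unif}, weak and then strong convergence via the generalized Aubin--Lions--Simon lemma (Theorem~\ref{thm:compact}) with the divergence-free extension Lemma~\ref{lemma:div} and the uniform Ehrling argument, followed by the construction of $N$-dependent divergence-free test functions and passage to the limit in Proposition~\ref{weak_form_discretize} using the identity \eqref{normal}. You have identified the same delicate points the paper isolates, and your outline matches the paper's proof step for step.
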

\begin{remark}
	The additional regularity assumption on the approximate shell displacements is not artificial. 
	This assumption is satisfied, for example, for structures with an additional regularization term of
	sixth order, 
	which have been studied by Boulakia \cite{Boulakia2005} (this term can be physically interpreted as a tripolar material, see \cite{multipolar}).
	For such materials, the elastic operator $\mathcal{L}$ is coercive in $H^3$, and, by the Sobolev embedding, 
       the shell displacements are Lipschitz functions.
       However, even for the structures with less regularity, such as the Koiter shell studied in this manuscript, the Lipschitz property will be satisfied 
       for the appropriate data. Specifying such classes of data is an open problem. 
\end{remark}
\begin{remark}
Requiring the additional assumption on the regularity of structure displacement is intimately related to the fact that we allow shell displacement in all three spatial directions
to be different from zero.
If we had assumed that only  radial displacement is different from zero, as is the case in most FSI literature, we would not need the additional regularity assumption on
shell displacement since, in that case, the Korn's equality for the fluid space would hold true (see \cite{CDEM},\cite{BorSunMultiLayered}), 
and the convergence of the gradients would be straightforward.
\end{remark}

\section{Conclusions}

We proved the existence of a weak solution to a 3D fluid-structure interaction problem involving a composite structure,
consisting of a thin shell supported by a mesh of curved rods. The mesh supported shell serves as a lateral wall of a cylinder
filled with an incompressible, viscous fluid. The fluid flow is driven by the time-dependent inlet and outlet dynamic pressure data. 
The main challenges associated with studying this problem from the analysis point of view are the nonlinearity in the fluid equations,
the geometric nonlinearity due to the motion of the fluid domain, the mixed, parabolic-hyperbolic nature of the coupled FSI problem, 
and the inclusion of all three components of structure displacement. 
A constructive existence proof is designed based on the time discretization via Lie operator splitting, combined with an Arbitrary Lagrangian-Eulerian approach 
to deal with the motion of the fluid domain, and a compactness argument based on the generalization of the 
Aubin-Lions-Simon compactness lemma to problems on moving domains. 
To the best of our knowledge, this is the first existence result involving a composite structure, with all
three components of thin structure displacement assumed to be non-zero functions. 
Because of the constructive nature of the existence proof, the main steps in the proof can be used as a foundation for a design 
of a numerical scheme to find solutions to this class of problems. 
Further research in the direction of relaxing the subgraph assumption on the moving boundary is under way. 
Because of the presence of the thin mesh, the weak solution space framework presented in this work seems to be the 
natural (physical) framework to study solutions of this class of problems.

\bibliographystyle{amsplain} 

\end{document}